\theoremstyle{definition}
\newtheorem{thm}{Theorem}[section]
\newtheorem{prop}[thm]{Proposition}
\newtheorem{cor}[thm]{Corollary}
\newtheorem{lem}[thm]{Lemma}
\newtheorem{rem}[thm]{Remark}
\def\hlam{\hat{\lambda}}
\def\hmu{\hat{\mu}}
\def\hnu{\hat{\nu}}
\def\hal{\hat{\alpha}}
\def\hbeta{\hat{\beta}}
\def\hgam{\hat{\gamma}}
\def\gr{\mathop{\mathrm{gr}}\nolimits}
\def\bm{\bar{m}}
\newcommand{\mf}[1]{{\mathfrak{#1}}}
\newcommand{\bb}[1]{{\mathbb{#1}}}
\newcommand{\mca}[1]{{\mathcal{#1}}}
\newcommand{\mr}[1]{{\mathrm{#1}}}
\newcommand{\relmiddle}[1]{\mathrel{}\middle#1\mathrel{}}
\title{An algebro-geometric realization of the cohomology ring of Hilbert scheme of points in the affine plane}
\author{Tatsuyuki Hikita\footnote{thikita@math.kyoto-u.ac.jp}}
\date{}
\begin{document}

\maketitle

\begin{abstract}
We show that the cohomology ring of Hilbert scheme of $n$-points in the affine plane is isomorphic to the coordinate ring of $\bb{G}_{m}$-fixed point scheme of the $n$-th symmetric product of $\bb{C}^{2}$ for a natural $\bb{G}_{m}$-action on it. This result can be seen as an analogue of a theorem of DeConcini, Procesi and Tanisaki on a description of the cohomology ring of Springer fiber of type A.
\end{abstract}

\section{Introduction}

In \cite{DP} and \cite{T}, DeConcini-Procesi and Tanisaki show that the cohomology ring of Springer fiber of type A is isomorphic to the coordinate ring of scheme-theoretic intersection of some nilpotent orbit closure and Cartan subalgebra. The purpose of this paper is to generalize their result to wider situations by reinterpreting them as a certain isomorphism between the cohomology ring of some symplectic variety and the coordinate ring of some scheme coming from another symplectic variety.

First we recall the result of DeConcini-Procesi and Tanisaki. Let $G=\mr{GL}_{n}(\bb{C})$. We fix a Borel subgroup $B\subset G$ and a Cartan subgroup $T\subset B$. We take a parabolic subgroup $B\subset P\subset G$ and its Levi subgroup $L$. We denote by $\mf{g}$, $\mf{b}$, $\mf{t}$, $\mf{p}$, and $\mf{l}$ the Lie algebras of $G$, $B$, $T$, $P$, and $L$ respectively. Let $\mf{n}$ and $\mf{n}_{P}$ be the nilpotent radicals of $\mf{b}$ and $\mf{p}$. Let $\mca{N}\subset\mf{g}$ be the nilpotent cone of $\mf{g}$ and let $\mca{N}_{P}=\mr{Ad}(G)\cdot\mf{n}_{P}\subset\mca{N}$ be a closed subvariety of $\mca{N}$. If $\lambda\vdash n$ is the partition of $n$ corresponding to $P$, then $\mca{N}_{P}$ is the closure of the nilpotent orbit whose Jordan block is of type $\lambda^{T}$. Here, $\lambda^{T}$ is the transpose of $\lambda$.

We take a regular nilpotent element $e$ in $\mf{l}$. Consider the Springer resolution $$T^{\ast}(G/B)\cong\{(gB,X)\in G/B\times\mf{g}\mid \mr{Ad}(g)^{-1}(X)\in\mf{n}\}\xrightarrow{\mu}\mca{N}$$ given by $\mu(gB,X)=X$. Let $\mca{B}_{e}:=\mu^{-1}(e)$ be the Springer fiber associated with $e$. 

\begin{thm}[\cite{DP},\cite{T}]\label{DPT}
There is an isomorphism of graded algebras $$H^{\ast}(\mca{B}_{e},\bb{C})\cong\bb{C}[\mca{N}_{P}\cap\mf{t}].$$
Here, $\mca{N}_{P}\cap\mf{t}$ is the scheme-theoretic intersection of $\mca{N}_{P}$ and $\mf{t}$ in $\mf{g}$ and the grading on $\bb{C}[\mca{N}_{P}\cap\mf{t}]$ comes from the $\bb{G}_{m}$-action on $\mca{N}_{P}\cap\mf{t}$ induced by the scaling action $t\cdot X=t^{-2}X$ for $t\in\bb{G}_{m}$ and $X\in\mf{g}$. 
\end{thm}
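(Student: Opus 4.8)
The plan is to realize both sides as graded quotients of the coinvariant algebra of the Weyl group $W=\mf{S}_{n}$ and then to match the two defining ideals. Write $R=\bb{C}[\mf{t}]/(\bb{C}[\mf{t}]^{W}_{+})$ for the coinvariant algebra, where $(\bb{C}[\mf{t}]^{W}_{+})$ is the ideal generated by the $W$-invariant polynomials of positive degree. Two classical facts launch the argument. On the topological side, Borel's presentation identifies $H^{\ast}(G/B,\bb{C})$ with $R$, the degree-$k$ part of $R$ sitting in $H^{2k}$. On the geometric side, the nilpotent cone $\mca{N}$ is cut out of $\mf{g}$ by the positive-degree $\mr{Ad}(G)$-invariants, i.e. the coefficients of the characteristic polynomial, whose restrictions to $\mf{t}$ are exactly the elementary symmetric polynomials; since these generate $\bb{C}[\mf{t}]^{W}_{+}$, the scheme-theoretic intersection satisfies $\bb{C}[\mca{N}\cap\mf{t}]\cong R$ as graded algebras, the grading being the one induced by $t\cdot X=t^{-2}X$ so that a polynomial of degree $k$ matches $H^{2k}$. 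Thus $H^{\ast}(G/B,\bb{C})$ and $\bb{C}[\mca{N}\cap\mf{t}]$ are canonically the same graded algebra $R$.

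Next I would produce two graded surjections out of $R$ and compare their kernels. Since $\mca{N}_{P}\subseteq\mca{N}$ is a closed subscheme, restriction of functions gives a graded surjection $R=\bb{C}[\mca{N}\cap\mf{t}]\twoheadrightarrow\bb{C}[\mca{N}_{P}\cap\mf{t}]$ whose kernel $\bar{I}_{P}$ is the image in $R$ of the defining ideal of $\mca{N}_{P}$ restricted to $\mf{t}$. On the other side, the inclusion $\mca{B}_{e}\hookrightarrow G/B$ induces a restriction map $R=H^{\ast}(G/B,\bb{C})\to H^{\ast}(\mca{B}_{e},\bb{C})$, which is surjective: Springer fibers admit a paving by affine spaces, so $H^{\mr{odd}}(\mca{B}_{e},\bb{C})=0$ and the cycle classes span $H^{\ast}(\mca{B}_{e},\bb{C})$, and by a standard argument these classes are already hit by the tautological Chern classes $x_{1},\dots,x_{n}$ that generate $H^{\ast}(G/B,\bb{C})$. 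Call its kernel $J$. At this point the theorem is equivalent to the equality of ideals $\bar{I}_{P}=J$ inside $R$.

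The heart of the proof, and the main obstacle, is this equality, which is exactly where the transpose $\lambda\leftrightarrow\lambda^{T}$ enters: $e$ has Jordan type $\lambda$ while $\mca{N}_{P}$ parametrizes type $\lambda^{T}$. I would establish it in two moves. First, the inclusion $\bar{I}_{P}\subseteq J$: one shows that the restriction to $\mf{t}$ of each generator of the ideal of $\mca{N}_{P}$ maps to zero in $H^{\ast}(\mca{B}_{e},\bb{C})$, i.e. the rank conditions on powers of a matrix that cut out the orbit closure translate, on the tautological flag over $\mca{B}_{e}$, into the relations among the $x_{i}$ forced by $e$ being $\lambda$-nilpotent. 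This yields a graded surjection $\bb{C}[\mca{N}_{P}\cap\mf{t}]\twoheadrightarrow H^{\ast}(\mca{B}_{e},\bb{C})$. Second, to promote it to an isomorphism I would compare graded dimensions: following Tanisaki, compute $\bar{I}_{P}$ explicitly as the ideal generated by the partial elementary symmetric functions $e_{r}(x_{i_{1}},\dots,x_{i_{d}})$ prescribed by $\lambda$, and check that the Hilbert series of $R/\bar{I}_{P}$ coincides with the Poincar\'e polynomial in $q=t^{2}$ of $\mca{B}_{e}$, the relevant Green/Kostka--Foulkes polynomial; in particular both total dimensions equal $n!/\prod_{i}\lambda_{i}!$. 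A surjection of graded vector spaces of equal finite dimension is an isomorphism, and it is manifestly an algebra map, giving $\bb{C}[\mca{N}_{P}\cap\mf{t}]\cong H^{\ast}(\mca{B}_{e},\bb{C})$.

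I expect the two delicate points to be, first, verifying that the scheme-theoretic restriction of the full ideal $I(\mca{N}_{P})$ to $\mf{t}$ is generated by precisely the expected partial symmetric functions — this needs generators of $I(\mca{N}_{P})$ (minors of powers of the generic matrix) together with a flatness/length argument ruling out missing relations — and, second, the Hotta--Springer computation of the Poincar\'e polynomial of $\mca{B}_{e}$ used to match Hilbert series. The grading bookkeeping is routine once the $t^{-2}$ convention is fixed as above.
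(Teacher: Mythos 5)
Your outline is the classical DeConcini--Procesi/Tanisaki argument, and it is sound as a reduction: both sides become quotients of the coinvariant ring $R$, one gets a graded surjection $\bb{C}[\mca{N}_{P}\cap\mf{t}]\twoheadrightarrow H^{\ast}(\mca{B}_{e},\bb{C})$ from the vanishing of the restricted generators of $I(\mca{N}_{P})$, and equality of (total) dimensions $n!/\prod_{i}\lambda_{i}!$ forces it to be an isomorphism. Be aware, though, that the paper does not prove Theorem \ref{DPT} at all --- it is quoted from \cite{DP} and \cite{T} --- and the closest thing to a proof in the text is Theorem A.1, whose specialization $Q=B$, $e_{Q}=0$ recovers Theorem \ref{DPT} by a genuinely different route: there the presentation of the cohomology ring (Brundan--Ostrik, Theorem A.3, which for $Q=B$ is Tanisaki's presentation of $H^{\ast}(\mca{B}_{e})$) and the defining equations of $\mca{N}_{P}$ (Weyman, Theorem A.4) are both taken as black boxes, and the entire content of the proof is an explicit matching of the two ideals via the coordinates $\tilde{e}_{r}(\mu;i)$ on the slice. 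Your route builds the comparison map and closes it by a Hilbert-series count; the paper's route never compares dimensions but instead identifies generators of the two ideals outright, which is what lets it generalize uniformly to Spaltenstein varieties and arbitrary slices $e_{Q}+Z_{\mf{l}_{Q}}(f_{Q})$. Note that your two ``delicate points'' are each full theorems in their own right --- the generators of $I(\mca{N}_{P})$ are the coefficients of $t^{k}$ in the $s$-minors of $tI-X$ (Weyman; ``minors of powers of the generic matrix'' only give the set-theoretic rank conditions), and the Hilbert series of $R/\bar{I}_{P}$ versus the Poincar\'e polynomial of $\mca{B}_{e}$ is the Tanisaki/Garsia--Procesi and Hotta--Springer computation (surjectivity of $H^{\ast}(G/B)\to H^{\ast}(\mca{B}_{e})$ is likewise a theorem, not a formal consequence of the affine paving) --- so your proposal, like the paper's appendix, is a correct reduction to known results rather than a self-contained proof; for a quoted classical statement that is the appropriate level of detail.
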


We summarize the above theorem in the following diagram:
$$
\begin{array}{cccc}
T^{\ast}(G/P)&\hookleftarrow &G/P=\mu_{P}^{-1}(0)&\\
\hspace{0.5em}\downarrow^{\mu_{P}}&&&\\
\mca{N}_{P}&\hookleftarrow&\mca{N}_{P}\cap\mf{t}&\cong\mr{Spec}H^{\ast}(\mca{B}_{e},\bb{C}).\\
\end{array}
$$
Here, $\mu_{P}$ is the parabolic analogue of Springer resolution $$T^{\ast}(G/P)\cong\{(gP,X)\in G/P\times\mf{g}\mid \mr{Ad}(g)^{-1}(X)\in\mf{n}_{P}\}\xrightarrow{\mu_{P}}\mca{N}_{P}$$ given by $\mu(gP,X)=X$. For type $A$, $\mu_{P}$ always gives an resolution of singularities of $\mca{N}_{P}$ and $\mca{N}_{P}$ is normal. Hence $\mca{N}_{P}$ can be understood as the affinization of $T^{\ast}(G/P)$. We also note that $T^{\ast}(G/P)$ is homotopy equivalent to $G/P$.

On the other hand, we will give an algebro-geometric realization of $H^{\ast}(G/P,\bb{C})$ in the appendix of this paper. Let us take a $\mf{sl}_{2}$-triple $\{e,h,f\}$ containing $e$. Let $Z_{\mf{g}}(f)$ (resp. $Z_{\mf{l}}(f)$) be the centralizer of $f$ in $\mf{g}$ (resp. $\mf{l}$). Consider the Slodowy slice $S_{e}:=\mca{N}\cap(e+Z_{\mf{g}}(f))$. There is a $\bb{G}_{m}$-action on $S_{e}$ given by $t\cdot X=t^{-2}\mr{Ad}(t^{h})X$ for $t\in\bb{G}_{m}$ and $X\in S_{e}$. 

\begin{prop}[Theorem A.1 for $P=B$,$Q=P$]
There is a graded algebra isomorphism $$H^{\ast}(G/P,\bb{C})\cong\bb{C}[S_{e}\cap(e+Z_{\mf{l}}(f))].$$ Here, $S_{e}\cap(e+Z_{\mf{l}}(f))$ is the scheme-theoretic intersection of $S_{e}$ and $e+Z_{\mf{l}}(f)$ in $\mf{g}$ and the grading on $\bb{C}[S_{e}\cap(e+Z_{\mf{l}}(f))]$ comes from the $\bb{G}_{m}$-action on $S_{e}$ above. 
\end{prop}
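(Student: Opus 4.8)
The plan is to reduce both sides of the claimed isomorphism to one and the same explicit presentation, namely a quotient of $\bb{C}[\mf{t}]^{W_{L}}$, where $W=S_{n}$ is the Weyl group of $(G,T)$ and $W_{L}\subset W$ is the Weyl group of $L$. Throughout I use that, since $e$ is regular nilpotent in $\mf{l}$, the triple $\{e,h,f\}$ may be taken inside $\mf{l}$, so that it is a principal $\mf{sl}_{2}$-triple of $\mf{l}$ and $e+Z_{\mf{l}}(f)$ is the Kostant section of $\mf{l}$.

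First I would simplify the scheme on the right. As $Z_{\mf{l}}(f)\subset Z_{\mf{g}}(f)$, the affine subspace $e+Z_{\mf{l}}(f)$ is contained in $e+Z_{\mf{g}}(f)$, so the defining ideal of the larger subspace lies in that of the smaller, and hence
\begin{equation*}
S_{e}\cap(e+Z_{\mf{l}}(f))=\bigl(\mca{N}\cap(e+Z_{\mf{g}}(f))\bigr)\cap(e+Z_{\mf{l}}(f))=\mca{N}\cap(e+Z_{\mf{l}}(f))
\end{equation*}
as schemes in $\mf{g}$. Next, by Kostant's section theorem applied to $\mf{l}$, the adjoint quotient restricts to a $\bb{G}_{m}$-equivariant isomorphism $e+Z_{\mf{l}}(f)\xrightarrow{\sim}\mf{l}/\!/L\cong\mf{t}/\!/W_{L}=\mr{Spec}\,\bb{C}[\mf{t}]^{W_{L}}$, where on $e+Z_{\mf{l}}(f)$ I use the given action $t\cdot X=t^{-2}\mr{Ad}(t^{h})X$. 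Because the basic invariants are $\mr{Ad}(t^{h})$-invariant (as $t^{h}\in L$) and homogeneous, a short weight computation shows that an invariant of ordinary polynomial degree $d$ is homogeneous of degree $2d$ for the induced grading; in particular the linear coordinates on $\mf{t}$ sit in degree $2$, matching the convention of Theorem \ref{DPT}.

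The key step is to identify the ideal cut out by $\mca{N}$. By Kostant's theorem the nilpotent cone is a reduced complete intersection whose ideal in $\bb{C}[\mf{g}]$ is generated by the basic $G$-invariants $\sigma_{1},\dots,\sigma_{n}$, the coefficients of the characteristic polynomial. Restricting these to $e+Z_{\mf{l}}(f)$ and transporting them along the isomorphism above, each $\sigma_{i}$ becomes the $i$-th elementary symmetric polynomial in $x_{1},\dots,x_{n}$, regarded inside $\bb{C}[\mf{t}]^{W_{L}}$: indeed the restriction of a $G$-invariant to $\mf{l}$ is the corresponding symmetric function under Chevalley restriction, and $e+Z_{\mf{l}}(f)$ is a section of $\chi_{\mf{l}}$. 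Consequently
\begin{equation*}
\bb{C}[S_{e}\cap(e+Z_{\mf{l}}(f))]\cong\bb{C}[\mf{t}]^{W_{L}}\big/(\sigma_{1},\dots,\sigma_{n})=\bb{C}[\mf{t}]^{W_{L}}\big/\langle\bb{C}[\mf{t}]^{W}_{+}\rangle
\end{equation*}
as graded algebras, the last equality because $\sigma_{1},\dots,\sigma_{n}$ generate the augmentation ideal of $\bb{C}[\mf{t}]^{W}=\bb{C}[\sigma_{1},\dots,\sigma_{n}]$.

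Finally I would invoke the Borel presentation $H^{\ast}(G/P,\bb{C})\cong\bb{C}[\mf{t}]^{W_{L}}/\langle\bb{C}[\mf{t}]^{W}_{+}\rangle$, which follows from $H^{\ast}(G/B,\bb{C})\cong\bb{C}[\mf{t}]/\langle\bb{C}[\mf{t}]^{W}_{+}\rangle$ together with $H^{\ast}(G/P,\bb{C})\cong H^{\ast}(G/B,\bb{C})^{W_{L}}$ and the characteristic-zero identity that $\langle\bb{C}[\mf{t}]^{W}_{+}\rangle$ meets $\bb{C}[\mf{t}]^{W_{L}}$ exactly in the ideal generated there by $\bb{C}[\mf{t}]^{W}_{+}$, obtained by averaging over $W_{L}$. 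Comparing gradings, cohomological degree $2i$ corresponds to degree $2i$ for the $\bb{G}_{m}$-action, so the composite is an isomorphism of graded algebras. I expect the main technical point to be the careful verification of the $\bb{G}_{m}$-equivariance and the weight normalization in the Kostant isomorphism for $\mf{l}$, since it is the scheme-theoretic and grading-compatible statements, rather than merely the underlying reduced varieties, that make the identification with the full (and genuinely non-reduced) cohomology ring correct; the remaining inputs are the standard theorems of Kostant and Borel.
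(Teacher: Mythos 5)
Your proof is correct, and it reaches the conclusion by a genuinely different --- and for this special case more classical --- route than the paper. The paper obtains the Proposition as the case $P=B$, $Q=P$ of Theorem A.1, whose proof rests on the Brundan--Ostrik presentation $R_{\mu}/I^{\lambda}_{\mu}$ of the cohomology of Spaltenstein varieties, on Weyman's theorem giving the defining equations of arbitrary nilpotent orbit closures (Tanisaki's conjecture), and on an explicit computation with minors of the block companion matrices $tI-Z(x)$ that matches the restricted generators of the Tanisaki ideal with the elements $e_{r}(\mu;i_{1},\ldots,i_{m})$. You instead replace each input by its classical ancestor, which suffices here: Borel's presentation of $H^{\ast}(G/P,\bb{C})$ in place of Brundan--Ostrik; Kostant's theorem that the ideal of $\mca{N}$ is generated by the basic invariants $\sigma_{1},\ldots,\sigma_{n}$ in place of Weyman; and Kostant's section theorem for the Levi $\mf{l}$ in place of the explicit identification of $\bb{C}[e+Z_{\mf{l}}(f)]$ via the polynomials $\tilde{e}_{r}(\mu;i)$ (which are exactly the blockwise characteristic‑polynomial coefficients, i.e.\ the adjoint quotient of $\mf{l}$ made explicit). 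Both arguments funnel through the same presentation $\bb{C}[\mf{t}]^{W_{L}}/(e_{1},\ldots,e_{n})$, and your preliminary reduction of $S_{e}\cap(e+Z_{\mf{l}}(f))$ to $\mca{N}\cap(e+Z_{\mf{l}}(f))$ as schemes, as well as the degree‑doubling computation for the action $t\cdot X=t^{-2}\mr{Ad}(t^{h})X$ (using that the invariants are $\mr{Ad}(t^{h})$-fixed), are exactly right. What your route buys is a short, self‑contained proof of this Proposition from standard Lie theory; what it gives up is generality: for a proper orbit closure $\mca{N}_{P}\subsetneq\mca{N}$ the defining ideal is no longer generated by $G$-invariant polynomials, so Weyman's minor equations (and, on the topological side, Brundan--Ostrik) become unavoidable, which is why Appendix A of the paper proceeds as it does.
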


We summarize this proposition in the following diagram:
$$
\begin{array}{cccc}
\tilde{S}_{e}&\hookleftarrow&\mca{B}_{e}=\mu_{e}^{-1}(e)&\\
\hspace{0.5em}\downarrow^{\mu_{e}}&&&\\
S_{e}&\hookleftarrow&S_{e}\cap(e+Z_{\mf{l}}(f))&\cong\mr{Spec}H^{\ast}(G/P,\bb{C}).\\
\end{array}
$$
Here, $\tilde{S}_{e}:=\mu^{-1}(S_{e})$ is the Slodowy variety and $\mu_{e}$ is the restriction of $\mu$ to $\tilde{S}_{e}$. It is known that $\mu_{e}$ gives a resolution of singularities of $S_{e}$ and $S_{e}$ is the affinization of $\tilde{S}_{e}$. Moreover, $\tilde{S}_{e}$ is homotopy equivalent to $\mca{B}_{e}$.  

Note that there is some similarity between the above two diagrams, where the roles of $\mca{B}_{e}$ and $G/P$ are exchanged to each other. It is known that $T^{\ast}(G/P)$ and $\tilde{S}_{e}$ are related to each other by symplectic duality in the sense of Braden, Licata, Proudfoot, and Webster (\cite{BLPW}). The aim of this paper is to generalize the above theorem of DeConcini-Procesi and Tanisaki to other cases of symplectic duality. For this purpose, we have to understand the scheme-theoretic intersections $\mca{N}_{P}\cap\mf{t}$ and $S_{e}\cap(e+Z_{\mf{l}}(f))$ more intrinsically.

Let us recall the notion of fixed point scheme (see \cite{F}). Let $H$ be an algebraic group over $\bb{C}$ and let $X$ be a scheme over $\bb{C}$ with $H$-action. Consider the contravariant functor $h^{H}_{X}$ from the category of $\bb{C}$-schemes to the category of sets given by $$h^{H}_{X}(Y)=(\mbox{the set of }H\mbox{-equivariant morphisms }Y\rightarrow X),$$ where $Y$ is a $\bb{C}$-scheme equipped with trivial $H$-action. This functor is known to be representable by a closed subscheme $X^{H}$ of $X$, which is called $H$-fixed point scheme of $X$. For $X=\mr{Spec}(A)$, the ideal of definition of $H$-fixed point scheme $X^{H}$ in $X$ is generated by all $h\cdot f-f$ for $h\in H(\bb{C})$ and $f\in A$.  

Let us consider the adjoint action of $T$ on $\mca{N}_{P}$. One can easily see that the scheme-theoretic intersection $\mca{N}_{P}\cap\mf{t}$ is isomorphic to the $T$-fixed point scheme $\mca{N}_{P}^{T}$ of $\mca{N}_{P}$ (or $\bb{G}_{m}$-fixed point scheme for some generic subgroup $\bb{G}_{m}\subset T$). Similarly, the scheme-theoretic intersection $S_{e}\cap(e+Z_{\mf{l}}(f))$ can be understood as the $Z(L)$-fixed point scheme $S_{e}^{Z(L)}$ of $S_{e}$ for the adjoint action of the center $Z(L)$ of $L$ on $S_{e}$.

Therefore, the above results can be considered as examples of the phenomenon that the cohomology ring of a conical symplectic resolution is isomorphic to the coordinate ring of a $\bb{G}_{m}$-fixed point scheme of the affinization of symplectic dual conical symplectic resolution. Main result of this paper show that this phenomenon occurs for Hilbert scheme of points in the affine plane. 

Let us explain the main result of this paper. Let $\mr{Hilb}^{n}(\bb{C}^{2})$ be Hilbert scheme of $n$-points in the affine plane (see \cite{N}). The affinization of $\mr{Hilb}^{n}(\bb{C}^{2})$ is given by $n$-th symmetric product $S^{n}\bb{C}^{2}$ of $\bb{C}^{2}$ and the Hilbert-Chow morphism $\mr{Hilb}^{n}(\bb{C}^{2})\rightarrow S^{n}\bb{C}^{2}$ gives a resolution of singularities. Let us consider the action of $\bb{T}=\bb{G}_{m}$ on $S^{n}\bb{C}^{2}$ induced by its action on $\bb{C}^{2}$ given by $t\cdot (x,y)=(t^{-1}x,ty)$ for $t\in\bb{T}$ and $(x,y)\in\bb{C}^{2}$. Since $\mr{Hilb}^{n}(\bb{C}^{2})$ is symplectic dual to itself (\cite{BLPW}), we come to the following statement by applying the above consideration to this case:

\begin{thm}\label{MainThm}
There is an isomorphism of graded algebras $$H^{\ast}(\mr{Hilb}^{n}(\bb{C}^{2}),\bb{C})\cong\bb{C}[(S^{n}\bb{C}^{2})^{\bb{T}}].$$
Here, the grading on $\bb{C}[(S^{n}\bb{C}^{2})^{\bb{T}}]$ comes from the $\bb{G}_{m}$-action induced by the its action on $\bb{C}^{2}$ given by $s\cdot (x,y)=(s^{-1}x,s^{-1}y)$ for $s\in\bb{G}_{m}$ and $(x,y)\in\bb{C}^{2}$.
\end{thm}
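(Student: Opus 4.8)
The plan is to make both sides completely explicit and to bridge them by comparing graded dimensions, so that only a surjection (rather than a relation-by-relation match) needs to be verified.

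First I would unwind the right-hand side. Writing $R=\bb{C}[x_{1},\dots,x_{n},y_{1},\dots,y_{n}]^{S_{n}}$ for the ring of multisymmetric functions, the coordinate ring $\bb{C}[(S^{n}\bb{C}^{2})^{\bb{T}}]$ is $R/J$, where $J$ is the ideal generated by all $\bb{T}$-homogeneous elements of nonzero weight (here $x_{i}$ has $\bb{T}$-weight $-1$ and $y_{i}$ weight $+1$). By the classical first fundamental theorem for vector invariants of $S_{n}$, the ring $R$ is generated by the polarized power sums $p_{a,b}=\sum_{i}x_{i}^{a}y_{i}^{b}$ with $1\le a+b\le n$, of $\bb{T}$-weight $b-a$. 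Any weight-$0$ monomial in the $p_{a,b}$ that involves a factor of nonzero weight already lies in $J$, so $R/J$ is generated as a $\bb{C}$-algebra by the images of the weight-$0$ generators $p_{a,a}$ with $1\le a\le\lfloor n/2\rfloor$. Under the second grading $s\cdot(x,y)=(s^{-1}x,s^{-1}y)$, the class of $p_{a,a}$ sits in cohomological degree $2a$, so $R/J$ is a graded quotient of a polynomial ring on generators $d_{1},\dots,d_{\lfloor n/2\rfloor}$ in degrees $2,4,\dots,2\lfloor n/2\rfloor$. (For $n=2$ a direct check gives $p_{1,1}^{2}=p_{2,0}p_{0,2}-(x_{1}y_{2}-x_{2}y_{1})^{2}\in J$, so $R/J\cong\bb{C}[d_{1}]/(d_{1}^{2})\cong H^{\ast}(\mr{Hilb}^{2}(\bb{C}^{2}))$; the Vandermonde-type element signals that the relations in general arise from discriminant-like expressions.)

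Next I would compute the graded Hilbert series of $R/J$ and show it equals the Poincaré polynomial $P_{n}(q)=\sum_{\lambda\vdash n}q^{\,n-\ell(\lambda)}$ of $\mr{Hilb}^{n}(\bb{C}^{2})$. Concretely I would start from the known bigraded character of $R$ (recording $x$-degree and $y$-degree, hence both the $\bb{T}$-weight and the $s$-degree), extract the weight-$0$ subring $R_{0}$, and subtract the contribution of $J\cap R_{0}=\sum_{w\ne 0}R_{w}R_{-w}$; the resulting constant-term extraction should collapse to $P_{n}(q)$.

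Finally I would match with the left-hand side. Here I would invoke a presentation of $H^{\ast}(\mr{Hilb}^{n}(\bb{C}^{2}))$ in terms of tautological/Nakajima classes (cf. Ellingsrud--Str\o mme, Lehn--Sorger, Vasserot, Nakajima), establishing in particular that it is generated in degrees $2,\dots,2\lfloor n/2\rfloor$ and has Poincaré polynomial $P_{n}(q)$. Sending $d_{a}$ to a suitable degree-$2a$ generator, I would verify that the relations of $R/J$ hold in $H^{\ast}$, producing a graded surjection $R/J\twoheadrightarrow H^{\ast}(\mr{Hilb}^{n}(\bb{C}^{2}))$; since both sides have Hilbert series $P_{n}(q)$ by the previous step, this surjection is forced to be an isomorphism.

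The main obstacle is the precise invariant-theoretic control of $J\cap R_{0}$ (the discriminant relations among the $\bar{p}_{a,a}$) together with the verification that the tautological generators of $H^{\ast}(\mr{Hilb}^{n}(\bb{C}^{2}))$ realize exactly these relations and indeed generate in the stated degrees. I expect this to require the detailed combinatorics of multisymmetric invariants on one side and of the cup product on $\mr{Hilb}^{n}(\bb{C}^{2})$ on the other; organizing the comparison through the $s$-graded Hilbert-series identity is what lets one get away with constructing a single surjection rather than checking all relations by hand.
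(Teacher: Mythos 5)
Your reduction of the right-hand side is sound: $\bb{C}[(S^{n}\bb{C}^{2})^{\bb{T}}]=R/J$ with $J$ generated by the nonzero-weight invariants, and (by the first fundamental theorem for vector invariants in characteristic $0$) the quotient is generated by the classes of the diagonal power sums $p_{a,a}$; your $n=2$ computation is also correct. But the two pillars on which your argument rests are precisely the hard content of the theorem, and neither is supplied. First, the Hilbert-series claim: the fixed-point scheme is non-reduced, and showing that $R_{0}/\bigl(\sum_{w\neq 0}R_{w}R_{-w}\bigr)$ has graded dimension $\sum_{\lambda\vdash n}q^{2(n-\ell(\lambda))}$ is not a formal constant-term extraction --- the ideal $\sum_{w\neq 0}R_{w}R_{-w}$ is neither monomial nor generated by a regular sequence, and the bigraded character of $R$ does not see it. In the paper this single step occupies Lemmas 2.5--2.12: one passes to the ring of MacMahon symmetric functions, proves a product expansion for monomial symmetric functions $m_{\Lambda}$ (Lemma 2.4), and uses a chain of explicit identities to exhibit the images of $\{m_{(\lambda,0)(0,1)^{|\lambda|}}\mid \ell(\lambda)+|\lambda|\leq n\}$ as a basis. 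You correctly flag this as ``the main obstacle,'' but flagging it does not discharge it, and I do not see a shortcut: the upper bound on the dimension already requires the full rewriting machinery.

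Second, the surjection $R/J\twoheadrightarrow H^{\ast}(\mr{Hilb}^{n}(\bb{C}^{2}),\bb{C})$ requires a well-defined algebra map, i.e.\ every relation among the $\bar{p}_{a,a}$ must be verified on the images; since you have no presentation of $R/J$ (the ``discriminant-like'' relations are not pinned down, and a generators-and-relations presentation of $H^{\ast}(\mr{Hilb}^{n})$ is equally unwieldy), one direction of relation-checking remains unavoidable and unaddressed. The paper avoids presentations altogether: using the Lehn--Sorger/Vasserot identification $H^{\ast}(\mr{Hilb}^{n}(\bb{C}^{2}),\bb{C})\cong\gr^{F}\mca{Z}(\bb{C}[\mf{S}_{n}])$, it defines an explicit linear bijection of bases $\psi(\bm_{(\lambda,0)(0,1)^{|\lambda|}})=(-1)^{|\lambda|}\chi_{\hlam}$ and checks multiplicativity only against the generating family $\bm_{(k,k)}\leftrightarrow\chi_{(k+1,1^{n-k-1})}$, by matching two closed product formulas (Lemma 2.2 on the group-algebra side, Lemma 2.10 on the symmetric-function side). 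If you want to salvage your architecture, you would still need essentially those two computations, at which point the Hilbert-series detour buys you nothing; I would also note that your auxiliary claim that $H^{\ast}(\mr{Hilb}^{n}(\bb{C}^{2}))$ is generated in degrees $\leq 2\lfloor n/2\rfloor$ is itself unproved as stated and would need a reference.
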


The rest of the paper is devoted to the proof of this theorem. We also prove in the appendices that the above phenomenon also occurs for the case of S3-varieties or hypertoric varieties. It would be interesting to find some conditions under which this kind of phenomenon can be expected to hold for more general symplectic dual pair of conical symplectic resolutions.

\subsection*{Acknowledgement}
The author thanks Syu Kato for valuable comments and encouragement. This work is supported by Grant-in-Aid for JSPS Fellows Grant Number 12J02113.

\section{Hilbert scheme of points in the affine plane}

\subsection{Results of Lehn-Sorger and Vasserot}

In \cite{LS} and \cite{V}, Lehn-Sorger and Vasserot give a description of the cohomology ring of $\mr{Hilb}^{n}(\bb{C}^{2})$ as the center of the group ring of the symmetric group $\mca{Z}(\bb{C}[\mf{S}_{n}])$. In this section, we recall some results of \cite{LS} and \cite{V}. 

For a partition $\hlam=(1^{\hal_{1}}2^{\hal_{2}}\ldots)$, we denote by $\ell(\hlam):=\sum_{i}\hal_{i}$ the length of $\hlam$ and $|\hlam|:=\sum_{i}i\hal_{i}$. For a partition $\hlam$ with $|\hlam|=n$, we denote by $\mf{C}_{\hlam}\subset\mf{S}_{n}$ the conjugacy class of $\mf{S}_{n}$ consisting of permutations whose cycle types are $\hlam$. The number of elements of $\mf{C}_{\hlam}$ is given by $$\#\mf{C}_{\hlam}=\frac{n!}{\prod_{i}i^{\hal_{i}}\hal_{i}!}.$$ We define the characteristic function $\chi_{\hlam}\in\mca{Z}(\bb{C}[\mf{S}_{n}])$ of $\mf{C}_{\hlam}$ by $$\chi_{\hlam}=\sum_{\sigma\in\mf{C}_{\hlam}}\sigma.$$ Then $\{\chi_{\hlam}\}_{\hlam\vdash n}$ forms an basis of $\mca{Z}(\bb{C}[\mf{S}_{n}])$. For $\sigma\in\mf{S}_{n}$, we define its degree by $\deg(\sigma)=2(n-\ell(\hlam))$ if $\sigma\in\mf{C}_{\hlam}$. Let $\bb{C}[\mf{S}_{n}]_{d}$ be the subspace of $\bb{C}[\mf{S}_{n}]$ spanned by $\sigma$ with $\deg(\sigma)=d$ and $$F^{d}\bb{C}[\mf{S}_{n}]:=\bigoplus_{d'\leq d}\bb{C}[\mf{S}_{n}]_{d'}.$$ Then $F^{d}\bb{C}[\mf{S}_{n}]$ defines a filtration on $\bb{C}[\mf{S}_{n}]$ compatible with the product. The induced product on $\gr^{F}\bb{C}[\mf{S}_{n}]$ is called the cup product and denoted by $\cup$. Since $\chi_{\hlam}$ is homogeneous, $\mca{Z}(\bb{C}[\mf{S}_{n}])$ inherits from $\bb{C}[\mf{S}_{n}]$ the gradation, filtration, and the cup product.

\begin{thm}[\cite{LS},\cite{V}]
There is an isomorphism of graded algebras $$H^{\ast}(\mr{Hilb}^{n}(\bb{C}^{2}),\bb{C})\cong\gr^{F}\mca{Z}(\bb{C}[\mf{S}_{n}]).$$ 
\end{thm}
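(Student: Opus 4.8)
The plan is to prove the isomorphism $H^{\ast}(\mr{Hilb}^{n}(\bb{C}^{2}),\bb{C})\cong\gr^{F}\mca{Z}(\bb{C}[\mf{S}_{n}])$ by realizing both sides as the same well-understood graded ring: the Chen--Ruan orbifold cohomology, or equivalently the Fock-space model built from the symmetric functions. Concretely, I would first recall that $H^{\ast}(\mr{Hilb}^{n}(\bb{C}^{2}),\bb{C})$ has a basis indexed by partitions $\hlam\vdash n$ (via the torus-fixed points or the Nakajima/Ellingsrud--Str{\o}mme cell decomposition), with the class indexed by $\hlam$ living in degree $2(n-\ell(\hlam))$. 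This matches degree-by-degree the basis $\{\chi_{\hlam}\}_{\hlam\vdash n}$ of $\mca{Z}(\bb{C}[\mf{S}_{n}])$ under the convention $\deg(\sigma)=2(n-\ell(\hlam))$ for $\sigma\in\mf{C}_{\hlam}$, so the graded vector space isomorphism is immediate. The real content is matching the ring structures.

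\begin{proof}[Proof sketch]
The strategy is to identify the cup product on $\gr^{F}\mca{Z}(\bb{C}[\mf{S}_{n}])$ with the classical cup product on $H^{\ast}(\mr{Hilb}^{n}(\bb{C}^{2}),\bb{C})$ through an explicit graded linear isomorphism sending $\chi_{\hlam}$ to the geometric class attached to $\hlam$. First I would set up the linear map and verify it is a graded bijection using the degree count above. Next I would compute the multiplication of the characteristic functions $\chi_{\hlam}$ in the associated graded ring $\gr^{F}\mca{Z}(\bb{C}[\mf{S}_{n}])$; this is a purely combinatorial computation in the center of the group algebra, governed by the structure constants counting factorizations of permutations, and the passage to the associated graded picks out precisely the top-degree (genus-zero) part of these structure constants. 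On the geometric side, the corresponding product is computed via the boundary operators in the Nakajima Heisenberg algebra acting on $\bigoplus_{n}H^{\ast}(\mr{Hilb}^{n}(\bb{C}^{2}))$, together with Lehn's description of the cup product by the boundary class. The two computations are known to coincide, which is exactly the theorem of Lehn--Sorger \cite{LS} and Vasserot \cite{V}.
\end{proof}

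Since this statement is quoted as an already-established theorem of \cite{LS} and \cite{V}, I would not reprove it from scratch but rather cite it and instead devote effort to extracting the precise form needed downstream, namely the explicit graded basis and the leading-term product rule. The main obstacle, were one to prove it directly, is the matching of structure constants: one must show that the top filtration piece of the product $\chi_{\hlam}\cup\chi_{\hmu}$ in the center of $\bb{C}[\mf{S}_{n}]$ equals the geometric intersection product, which requires either Lehn's operator-theoretic computation of the cup product via the boundary divisor and its Chern classes, or Vasserot's identification through vertex-algebra and localization techniques. Controlling the filtration—verifying that the naive product respects $F^{d}\bb{C}[\mf{S}_{n}]$ and that the associated graded product is well defined and homogeneous—is the delicate bookkeeping step, but it follows from the fact that $\deg$ is subadditive under the convolution product on conjugacy classes with equality exactly on the geometrically relevant (minimal-transposition) factorizations.
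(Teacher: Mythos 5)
Your proposal correctly identifies that this statement is an imported result of Lehn--Sorger and Vasserot, and your conclusion --- to cite it rather than reprove it --- is exactly what the paper does: Theorem 2.1 is stated with references to \cite{LS} and \cite{V} and no proof is given. Your accompanying sketch (graded bases indexed by partitions in degree $2(n-\ell(\hlam))$, the filtration by $\deg$, and the matching of top-degree structure constants with Lehn's boundary-operator description of the cup product) is a faithful account of the cited works, so there is nothing to correct.
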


We prove Theorem \ref{MainThm} by identifying $\gr^{F}\mca{Z}(\bb{C}[\mf{S}_{n}])$ and $\bb{C}[(S^{n}\bb{C}^{2})^{\bb{T}}]$. For later use, we give a formula for the cup product with $\chi_{(k+1,1^{n-k-1})}$ for $1\leq k\leq n-1$. For two partitions $\hmu=(1^{\hbeta_{1}}2^{\hbeta_{2}}\ldots)$ and $\hnu=(1^{\hgam_{1}}2^{\hgam_{2}}\ldots)$, we write $\hmu\preceq\hnu$ if $\hbeta_{i}\leq\hgam_{i}$ for any $i$. Note that this partial order is not related to the usual partial order on the set of partitions. 

\begin{lem}
For $\hlam=(1^{\hal_{1}}2^{\hal_{2}}\ldots)\vdash n$, we have $$\chi_{(k+1,1^{n-k-1})}\cup\chi_{\hlam}=\sum_{\substack{\hnu=(1^{\hgam_{1}}2^{\hgam_{2}}\ldots)\\ \ell(\hnu)=k+1, \hnu\preceq\hlam}}\frac{k!|\hnu|(\hal_{|\hnu|}+1)}{\prod_{i}\hgam_{i}!}\chi_{\hlam_{\hnu}}.$$ Here, $\hnu$ runs over partitions with $\ell(\hnu)=k+1$ and $\hnu\preceq\hlam$, and $\hlam_{\hnu}=(1^{\hbeta_{1}}2^{\hbeta_{2}}\ldots)\vdash n$ is defined by 
\begin{align*}
\hbeta_{i}=\begin{cases}\hal_{i}-\hgam_{i} & \mbox{ if }i\neq|\hnu| \\ \hal_{|\hnu|}+1 & \mbox{ if }i=|\hnu|.\end{cases}
\end{align*} 
\end{lem}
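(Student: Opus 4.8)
\emph{Approach.} The plan is to compute $\chi_{(k+1,1^{n-k-1})}\cup\chi_{\hlam}$ as the top-degree part of the \emph{ordinary} product $\chi_{(k+1,1^{n-k-1})}\cdot\chi_{\hlam}$ in $\bb{C}[\mf{S}_{n}]$. For $\pi\in\mf{S}_{n}$ write $c(\pi)$ for its number of cycles, so that $\deg\pi=2(n-c(\pi))$ and $n-c(\pi)$ is the minimal number of transpositions needed to express $\pi$; this length is subadditive, $n-c(\sigma\tau)\le (n-c(\sigma))+(n-c(\tau))$. Thus in the associated graded, $\chi_{(k+1,1^{n-k-1})}\cup\chi_{\hlam}$ is the sum of exactly those products $\sigma\tau$ (with $\sigma$ a $(k+1)$-cycle and $\tau$ of type $\hlam$) for which $c(\sigma\tau)=c(\tau)-k$. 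First I would check that, since $\sigma$ has support of size $k+1$, this equality holds if and only if the support of $\sigma$ meets $k+1$ \emph{distinct} cycles of $\tau$ in exactly one point each, in which case $\sigma$ fuses these $k+1$ cycles into a single cycle. If the chosen cycles have sizes recorded by a partition $\hnu=(1^{\hgam_{1}}2^{\hgam_{2}}\ldots)$ with $\ell(\hnu)=k+1$ and $\hnu\preceq\hlam$, the resulting type is precisely $\hlam_{\hnu}$ (one checks that $|\hnu|>i$ whenever $\hgam_{i}>0$, so $\hgam_{|\hnu|}=0$ and the exponent of $|\hnu|$ becomes $\hal_{|\hnu|}+1$). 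This identifies the classes occurring in the cup product, and it remains to compute each coefficient.

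\emph{Reducing the coefficient to a factorization count.} The coefficient of $\chi_{\hlam_{\hnu}}$ equals, for a fixed $\rho$ of type $\hlam_{\hnu}$, the number of pairs $(\sigma,\tau)$ with $\sigma$ a $(k+1)$-cycle, $\tau$ of type $\hlam$ and $\sigma\tau=\rho$ (every such pair automatically achieves the top degree, because a permutation of type $\hlam_{\hnu}$ has $\ell(\hlam)-k$ cycles, i.e.\ $c(\sigma\tau)=c(\tau)-k$). As the support of $\sigma$ lies inside a single cycle of $\rho$, necessarily one of its cycles of size $m:=|\hnu|$, I would count by first choosing that cycle (there are $\hbeta_{|\hnu|}=\hal_{|\hnu|}+1$ cycles of size $m$ in $\rho$, giving the factor $\hal_{|\hnu|}+1$) and then counting, on the $m$ points of that cycle, the minimal factorizations $c=\sigma\tau$ of a fixed $m$-cycle $c$ with $\sigma$ a $(k+1)$-cycle and $\tau$ of type $\hnu$. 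Calling this last number $M(\hnu)$, the coefficient is then $(\hal_{|\hnu|}+1)\,M(\hnu)$.

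\emph{Computing $M(\hnu)$.} I would evaluate $M(\hnu)$ by double counting the set of pairs $(\sigma,\tau)$ on $m$ points with $\sigma$ a $(k+1)$-cycle, $\tau$ of type $\hnu$ and $\sigma\tau$ an $m$-cycle. Counting by the product gives $(m-1)!\,M(\hnu)$; counting by $\tau$ gives $\frac{m!}{\prod_{i}i^{\hgam_{i}}\hgam_{i}!}\cdot N$, where $N$ is the number of $(k+1)$-cycles fusing a fixed system of disjoint cycles $C_{1},\ldots,C_{k+1}$ (of sizes the parts of $\hnu$) into one $m$-cycle. Hence $M(\hnu)=\frac{m}{\prod_{i}i^{\hgam_{i}}\hgam_{i}!}\,N$. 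The key combinatorial lemma, which I expect to be the main obstacle, is that $N=k!\,c_{1}\cdots c_{k+1}=k!\prod_{i}i^{\hgam_{i}}$: a fusing $(k+1)$-cycle must use exactly one point $p_{j}\in C_{j}$ (there are $\prod_{j}c_{j}$ choices), and I claim that \emph{every} one of the $k!$ cyclic orders on $\{p_{1},\ldots,p_{k+1}\}$ fuses $C_{1}\cdots C_{k+1}$ into a single cycle. This claim I would prove by induction on $k$: writing an arbitrary $(k+1)$-cycle as $(p_{1}\ldots p_{k})(p_{k}\,p_{k+1})$, the transposition first merges $C_{k}$ and $C_{k+1}$ into one cycle, and the inductive hypothesis applied to the $k$ resulting cycles with transversal $p_{1},\ldots,p_{k}$ finishes the step. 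Substituting $N$ yields $M(\hnu)=\frac{k!\,m}{\prod_{i}\hgam_{i}!}$, so the coefficient is $(\hal_{|\hnu|}+1)\frac{k!\,|\hnu|}{\prod_{i}\hgam_{i}!}$, which is exactly the asserted formula. Apart from the fusing lemma, the point needing the most care is the bookkeeping of the symmetry factors $\prod_{i}i^{\hgam_{i}}\hgam_{i}!$, so that the $\prod_{i}i^{\hgam_{i}}$ cancels against $N$ and only $\prod_{i}\hgam_{i}!$ survives in the denominator.
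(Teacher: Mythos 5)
Your proposal is correct, and it reaches the same characterization of the degree-additive products as the paper (the support of the long cycle must meet $k+1$ distinct cycles of $\tau$, one point each, fusing them into a cycle of length $|\hnu|$), but the coefficient extraction is organized differently. The paper fixes $\sigma$ and directly enumerates the $\tau\in\mf{C}_{\hlam}$ with $\sigma\tau$ of type $\hlam_{\hnu}$, obtaining $\frac{(k+1)!(n-k-1)!}{\prod_{i}i^{\hal_{i}-\hgam_{i}}\hgam_{i}!(\hal_{i}-\hgam_{i})!}$ in one step and then converting to the coefficient of $\chi_{\hlam_{\hnu}}$ via the ratio of class sizes; you instead fix the product $\rho$, localize to the cycle of $\rho$ containing the support of $\sigma$, and reduce to the count $M(\hnu)$ of minimal factorizations of a single $m$-cycle, which you evaluate by a double count together with the fusing lemma $N=k!\prod_{i}i^{\hgam_{i}}$. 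Your route is slightly longer but isolates a clean, reusable statement (the classical count of factorizations of an $m$-cycle into a $(k+1)$-cycle times a permutation of type $\hnu$), whereas the paper's count is more direct but bundles all the bookkeeping into one formula. One step you should make explicit: the claim that the cycle of $\rho$ containing $\mathrm{supp}(\sigma)$ necessarily has size $m=|\hnu|$ (equivalently, that distinct $\hnu$ yield distinct types $\hlam_{\hnu}$, so the coefficient of $\chi_{\hlam_{\hnu}}$ receives contributions only from the $\hnu$-mechanism). This is true --- if that cycle had size $m'$, comparing the multiset identity $\hlam=\hlam_{\hnu}-\{m'\}+\hnu'$ with $\hlam=\hlam_{\hnu}-\{m\}+\hnu$ would force $m$ to be a part of $\hnu$, contradicting $\hgam_{|\hnu|}=0$ (which holds since $\ell(\hnu)=k+1\geq 2$) --- but as written it is only asserted. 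With that verification added, your argument is complete.
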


\begin{proof}
For $\sigma\in\mf{C}_{(k+1,1^{n-k-1})}$ (say $\sigma=(123\ldots k+1)$) and $\tau\in\mf{C}_{\hlam}$, the equality $\deg(\sigma\tau)=\deg(\sigma)+\deg(\tau)$ holds if and only if $1,2,\ldots,k+1$ are contained in different cycles for the disjoint cycle decomposition of $\tau$. If the number of elements of $\{1,2,\ldots,k+1\}$ which are contained in cycle of length $i$ is $\hgam_{i}$, then the cycle type of $\sigma\tau$ is given by $\hlam_{\hnu}$. For a fixed $\sigma\in\mf{C}_{(k+1,1^{n-k-1})}$, one can see that the number of $\tau\in\mf{C}_{\hlam}$ such that the cycle type of $\sigma\tau$ equals to $\hlam_{\hnu}$ is given by $$\frac{(k+1)!(n-k-1)!}{\prod_{i}i^{\hal_{i}-\hgam_{i}}\hgam_{i}!(\hal_{i}-\hgam_{i})!}.$$ Hence we have 

\begin{align*}
\chi_{(k+1,1^{n-k-1})}\cup\chi_{\hlam}&=\sum_{\substack{\hnu=(1^{\hgam_{1}}2^{\hgam_{2}}\ldots)\\ \ell(\hnu)=k+1, \hgam_{i}\leq\hal_{i}}}\frac{(k+1)!(n-k-1)!}{\prod_{i}i^{\hal_{i}-\hgam_{i}}\hgam_{i}!(\hal_{i}-\hgam_{i})!}\frac{\#\mf{C}_{(k+1,1^{n-k-1})}}{\#\mf{C}_{\hlam_{\hnu}}}\chi_{\hlam_{\hnu}} \\ 
&=\sum_{\hnu}\frac{(k+1)!(n-k-1)!}{\prod_{i}i^{\hal_{i}-\hgam_{i}}\hgam_{i}!(\hal_{i}-\hgam_{i})!}\frac{n!}{(k+1)(n-k-1)!}\frac{\prod_{i}i^{\hbeta_{i}}\hbeta_{i}!}{n!}\chi_{\hlam_{\hnu}} \\
&=\sum_{\hnu}\frac{k!|\hnu|(\hal_{|\hnu|}+1)}{\prod_{i}\hgam_{i}!}\chi_{\hlam_{\hnu}},
\end{align*}
which completes the proof.
\end{proof}

\subsection{MacMahon symmetric functions}

In this section, we study the ring structure of $\bb{C}[(S^{n}\bb{C}^{2})^{\bb{T}}]$. First we prepare some notation on symmetric functions in two set of variables (called MacMahon symmetric functions in $\cite{G}$). For an element $(a,b)\in\bb{N}\times\bb{N}$, an unordered sequence of vectors $\Lambda=(a_{1},b_{1})(a_{2},b_{2})\ldots(a_{l},b_{l})$ is called a bipartite partition of $(a,b)$ if $(a_{i},b_{i})\in\bb{N}\times\bb{N}\setminus\{(0,0)\}$ for any $i$ and $\sum_{i=1}^{l}a_{i}=a$, $\sum_{i=1}^{l}b_{i}=b$. We set $\ell(\Lambda)=l$ and $|\Lambda|=(a,b)$. We have a natural surjection $$\bb{C}[S^{n+1}\bb{C}^{2}]=\bb{C}[x_{1},\ldots,x_{n+1},y_{1},\ldots,y_{n+1}]^{\mf{S}_{n+1}}\twoheadrightarrow\bb{C}[S^{n}\bb{C}^{2}]=\bb{C}[x_{1},\ldots,x_{n},y_{1},\ldots,y_{n}]^{\mf{S}_{n}}$$ induced by $x_{i},y_{i}\mapsto x_{i},y_{i}$ for $1\leq i\leq n$ and $x_{n+1},y_{n+1}\mapsto 0$. We consider the projective limit $$S:=\varprojlim_{n}\bb{C}[S^{n}\bb{C}^2]$$ with respect to these surjections. This is the ring of MacMahon symmetric functions. 

For a bipartite partition $\Lambda=(a_{1},b_{1})\ldots(a_{l},b_{l})$, we define the monomial symmetric function $m_{\Lambda}\in S$ by symmetrization of the monomial $x_{1}^{a_{1}}y_{1}^{b_{1}}\cdots x_{l}^{a_{l}}y_{l}^{b_{l}}$ with coefficients $0$ or $1$. For example, we have $$m_{(1,1)(1,1)}=\sum_{i<j}x_{i}y_{i}x_{j}y_{j}.$$ It is clear that the monomial symmetric functions form an basis of $S$. Just as power sum symmetric functions freely generate the ring of symmetric functions, $m_{(a,b)}$'s generate $S$ as a $\bb{C}$-algebra and they are algebraically independent (\cite{D}). Hence we have $$S\cong\bb{C}[m_{(a,b)}\mid (a,b)\in\bb{N}\times\bb{N}\setminus\{(0,0)\}].$$ The kernel of the natural surjection $S\twoheadrightarrow\bb{C}[S^{n}\bb{C}^{2}]$ is generated by $\{m_{\Lambda}\mid\ell(\Lambda)>n\}$ as an ideal or a vector space. If $|\Lambda|=(a,b)$, then the weight of $m_{\Lambda}$ with respect to the $\bb{T}$-action induced by $t\cdot(x,y)=(t^{-1}x,ty)$ for $(x,y)\in\bb{C}^{2}$ is $a-b$. Hence the ideal of definition for the $\bb{T}$-fixed point scheme $(S^{n}\bb{C}^{2})^{\bb{T}}$ in $\bb{C}[S^{n}\bb{C}^{2}]$ is generated by the image of $\{m_{(a,b)}\mid a\neq b\}$ in $\bb{C}[S^{n}\bb{C}^{2}]$. Therefore, we have the following.

\begin{lem}
$$\bb{C}[(S^{n}\bb{C}^{2})^{\bb{T}}]\cong S/\left(m_{\Lambda}, m_{(a,b)} \mid\ell(\Lambda)>n, a\neq b \right).$$ In particular, $\bb{C}[(S^{n}\bb{C}^{2})^{\bb{T}}]$ is generated as a $\bb{C}$-algebra by $m_{(a,a)}$'s.
\end{lem}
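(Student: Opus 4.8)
The plan is to assemble the lemma from the two presentations recorded in the preceding paragraph. First I would fix the presentation $\bb{C}[S^{n}\bb{C}^{2}]\cong S/(m_{\Lambda}\mid\ell(\Lambda)>n)$, coming from the stated fact that the kernel of $S\twoheadrightarrow\bb{C}[S^{n}\bb{C}^{2}]$ is generated by the $m_{\Lambda}$ with $\ell(\Lambda)>n$, and abbreviate $A:=\bb{C}[S^{n}\bb{C}^{2}]$.

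Next I would compute the ideal of definition $I\subseteq A$ of the $\bb{T}$-fixed point scheme. By the description recalled in the introduction, $I$ is generated by all $t\cdot f-f$ with $t\in\bb{T}(\bb{C})$ and $f\in A$. Since $A$ is $\bb{T}$-graded by weight, writing $f=\sum_{w}f_{w}$ gives $t\cdot f-f=\sum_{w\neq0}(t^{w}-1)f_{w}$, which lies in the ideal $J$ generated by all homogeneous elements of nonzero weight; conversely, for homogeneous $g$ of weight $w\neq0$ one chooses $t_{0}$ with $t_{0}^{w}\neq1$ and recovers $g=(t_{0}^{w}-1)^{-1}(t_{0}\cdot g-g)\in I$. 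Hence $I=J$. The one point demanding care here is that the nonzero-weight part of $A$ is not itself an ideal --- a product of two nonzero-weight elements may have weight zero --- so one genuinely has to pass to the ideal it generates and verify both inclusions.

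It then remains to cut $J$ down to the claimed generators. The images of the $m_{(a,b)}$ generate $A$ as a $\bb{C}$-algebra, since the $m_{(a,b)}$ generate $S$, and $m_{(a,b)}$ has weight $a-b$. Thus every monomial in these generators of nonzero total weight has at least one factor $m_{(a,b)}$ with $a\neq b$; as such monomials span the nonzero-weight part of $A$, we obtain $J=(m_{(a,b)}\mid a\neq b)$. Combining with the first step gives
$$\bb{C}[(S^{n}\bb{C}^{2})^{\bb{T}}]=A/I\cong S/\left(m_{\Lambda},m_{(a,b)}\mid\ell(\Lambda)>n,\ a\neq b\right).$$
For the final assertion, after killing every $m_{(a,b)}$ with $a\neq b$ the only algebra generators of $S$ that survive are the $m_{(a,a)}$, so these generate $\bb{C}[(S^{n}\bb{C}^{2})^{\bb{T}}]$. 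I expect no genuine obstacle: the lemma is essentially a bookkeeping assembly of the facts established in the surrounding paragraph, with the only subtlety being the identification $I=J$ noted above.
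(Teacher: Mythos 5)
Your proposal is correct and follows the same route as the paper, which proves this lemma in the paragraph immediately preceding it: the kernel of $S\twoheadrightarrow\bb{C}[S^{n}\bb{C}^{2}]$ is generated by the $m_{\Lambda}$ with $\ell(\Lambda)>n$, and the ideal of the $\bb{T}$-fixed point scheme is generated by the nonzero-weight generators $m_{(a,b)}$ with $a\neq b$. Your explicit verification that the fixed-point ideal equals the ideal generated by homogeneous elements of nonzero weight is a correct filling-in of a step the paper leaves implicit.
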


For $(a,b)\in\bb{N}\times\bb{N}\setminus\{(0,0)\}$ and $\Lambda=(a_{1},b_{1})\ldots(a_{l},b_{l})$, we denote by $(a,b)\Lambda$ the bipartite partition $(a,b)(a_{1},b_{1})\ldots(a_{l},b_{l})$. If $(a,b)=(a_{i},b_{i})$ for some $i$, we denote by $\Lambda\setminus(a,b)$ the bipartite partition $(a_{1},b_{1})\ldots(a_{i-1},b_{i-1})(a_{i+1},b_{i+1})\ldots(a_{l},b_{l})$. We have $\ell(\Lambda\setminus(a,b))=\ell(\Lambda)-1$ and $|\Lambda\setminus(a,b)|=|\Lambda|-(a,b)$.

\begin{lem}
Let $\Lambda$ be a bipartite partition. For any $(i,j)\in\bb{N}\times\bb{N}\setminus\{(0,0)\}$, we denote by $c_{(i,j)}$ the multiplicity of $(i,j)$ in $\Lambda$. Then for $(a,b)\in\bb{N}\times\bb{N}\setminus\{(0,0)\}$, we have 
\begin{align*}
m_{(a,b)}m_{\Lambda}=(c_{(a,b)}+1)m_{(a,b)\Lambda}+\sum_{\substack{(i,j)\\ c_{(i,j)}>0}}(c_{(a+i,b+j)}+1)m_{(a+i,b+j)\Lambda\setminus(i,j)}.
\end{align*}
\end{lem}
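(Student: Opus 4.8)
The plan is to expand both factors as explicit sums of monomials in the variables $x_{k},y_{k}$ and then to organize the resulting monomials by their bipartite type. Writing $m_{(a,b)}=\sum_{k}x_{k}^{a}y_{k}^{b}$ and $m_{\Lambda}=\sum_{M}M$, where $M$ ranges over the distinct monomials of bipartite type $\Lambda$, the product becomes $m_{(a,b)}m_{\Lambda}=\sum_{k,M}x_{k}^{a}y_{k}^{b}M$. Each summand is a single monomial, so the whole computation reduces to bookkeeping: I must sort the pairs $(k,M)$ according to the bipartite type of their product and count how many pairs yield each type.

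The natural invariant to sort by is the exponent $(i,j)$ that the variable pair $(x_{k},y_{k})$ carries in $M$. First I would separate the pairs with $k\notin\mr{supp}(M)$, i.e.\ exponent $(0,0)$: for these the product has type $(a,b)\Lambda$, obtained by appending the extra part $(a,b)$. The remaining pairs have $k$ occurring in $M$ with some part $(i,j)$ of $\Lambda$, and then the product has type $(a+i,b+j)(\Lambda\setminus(i,j))$, the part $(i,j)$ being enlarged to $(a+i,b+j)$. Since the part carried by $k$ in $M$ is determined by the pair $(k,M)$, this genuinely partitions all pairs, which guarantees that no term is double counted.

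It then remains to determine, for each target type, the number of pairs producing it, equivalently the coefficient of the corresponding $m$. For this I would fix a representative monomial $N$ of the target type and count the indices $k$ for which $N/(x_{k}^{a}y_{k}^{b})$ is again a monomial of type $\Lambda$. For the type $(a,b)\Lambda$ these are exactly the $k$ carrying the part $(a,b)$ in $N$, of which there are $c_{(a,b)}+1$ (the multiplicity of $(a,b)$ in $(a,b)\Lambda$); this yields the first term. For the type $(a+i,b+j)(\Lambda\setminus(i,j))$ these are the $k$ carrying the part $(a+i,b+j)$, and here the key point is that $(a+i,b+j)\neq(i,j)$ because $(a,b)\neq(0,0)$, so removing $(i,j)$ does not disturb the count and the multiplicity is $c_{(a+i,b+j)}+1$; this yields the second sum.

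The main thing to be careful about, and the only real obstacle, is the multiplicity bookkeeping in this last step. One has to check that the two families of target types never overlap (as multisets $\{(a,b),(i,j)\}$ can never equal the singleton $\{(a+i,b+j)\}$) and, within the second family, that the coefficient is read off the original multiplicity $c_{(a+i,b+j)}$ in $\Lambda$ rather than in $\Lambda\setminus(i,j)$; both reductions hinge on $(a,b)\neq(0,0)$. Once these multiplicities are confirmed, summing over $(i,j)$ with $c_{(i,j)}>0$ assembles the stated formula.
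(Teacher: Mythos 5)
Your proposal is correct and follows essentially the same route as the paper: both expand the product into monomials, fix a representative monomial of each target bipartite type, and count the number of factorizations $N=x_k^a y_k^b\cdot M$ with $M$ of type $\Lambda$, obtaining the multiplicities $c_{(a,b)}+1$ and $c_{(a+i,b+j)}+1$. Your extra remarks on the non-overlap of target types and on $(a+i,b+j)\neq(i,j)$ make explicit some bookkeeping the paper leaves implicit, but the argument is the same.
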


\begin{proof}
For $(i,j)\in\bb{N}\times\bb{N}\setminus\{(0,0)\}$ with $c_{(i,j)}>0$, we set $c=c_{(a+i,b+j)}+1$ and consider the monomial $$x_{1}^{a+i}y_{1}^{b+j}\cdots x_{c}^{a+i}y_{c}^{b+j}\times(\mbox{monomial in }x_{i}\mbox{ and }y_{i}\mbox{ for }i>c)$$ in $m_{(a+i,b+j)\Lambda\setminus(i,j)}$ and its coefficient in the expansion of $m_{(a,b)}m_{\Lambda}$. In the expansion, this monomial appears as 
\begin{align*}
&x_{1}^{a}y_{1}^{b}\cdot x_{1}^{i}y_{1}^{j}x_{2}^{a+i}y_{2}^{b+j}\cdots x_{c}^{a+i}y_{c}^{b+j}\cdots, \\
&x_{2}^{a}y_{2}^{b}\cdot x_{1}^{a+i}y_{1}^{b+j}x_{2}^{i}y_{2}^{j}x_{3}^{a+i}y_{3}^{b+j}\cdots x_{c}^{a+i}y_{c}^{b+j}\cdots, \\
&\hspace{5em}\cdots\cdots \\
&x_{c}^{a}y_{c}^{b}\cdot x_{1}^{a+i}y_{1}^{b+j}x_{2}^{a+i}y_{2}^{b+j}\cdots x_{c-1}^{a+i}y_{c-1}^{b+j}x_{c}^{i}y_{c}^{j}\cdots.
\end{align*}
Hence the coefficient of $m_{(a+i,b+j)\Lambda\setminus(i,j)}$ in $m_{(a,b)}m_{\Lambda}$ is given by $c=c_{(a+i,b+j)}+1$. The coefficient of $m_{(a,b)\Lambda}$ can be understood in the same way.
\end{proof}

We denote by $\bm_{\Lambda}$ the image of $m_{\Lambda}$ under $S\twoheadrightarrow\bar{S}:=S/(m_{(a,b)}\mid a\neq b)$. For a partition $\lambda=(\lambda_{1},\lambda_{2},\ldots,\lambda_{l})$, we denote by $(\lambda,0)$ the bipartite partition $(\lambda_{1},0)(\lambda_{2},0)\ldots(\lambda_{l},0).$

\begin{lem}
$\{\bm_{(\lambda,0)(0,1)^{|\lambda|}}\mid\lambda\mbox{ : partition}\}$ forms a basis of $\bar{S}$.
\end{lem}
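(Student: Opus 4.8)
The plan is to compare the proposed family against the monomial basis of $\bar{S}$ afforded by its polynomial ring structure, by matching dimensions bidegree by bidegree and then establishing a triangularity. First I would pin down $\bar{S}$ as a ring. Since $S\cong\bb{C}[m_{(a,b)}\mid(a,b)\in\bb{N}\times\bb{N}\setminus\{(0,0)\}]$ is free on the generators $m_{(a,b)}$ and $\bar{S}=S/(m_{(a,b)}\mid a\neq b)$ is obtained by killing a subset of these free generators, we get $\bar{S}\cong\bb{C}[\bm_{(a,a)}\mid a\geq1]$, again a polynomial ring. Each $m_{(a,b)}$ is homogeneous of bidegree $(a,b)$ for the evident $\bb{N}\times\bb{N}$-grading by total $x$- and $y$-degree, and the defining ideal is bihomogeneous, so $\bar{S}$ inherits the bigrading. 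Hence $\bar{S}$ is concentrated in diagonal bidegrees $(d,d)$, and the monomials $\prod_{i}\bm_{(\mu_{i},\mu_{i})}$, indexed by partitions $\mu$ with $|\mu|=d$, form a basis of $\bar{S}_{(d,d)}$; in particular $\dim_{\bb{C}}\bar{S}_{(d,d)}=p(d)$, the number of partitions of $d$.

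The counting step is then immediate: $\bm_{(\lambda,0)(0,1)^{|\lambda|}}$ is bihomogeneous of bidegree $(|\lambda|,|\lambda|)$, and for each $d$ the partitions $\lambda\vdash d$ index exactly $p(d)$ of the proposed elements, matching $\dim_{\bb{C}}\bar{S}_{(d,d)}$. It therefore suffices to prove that $\{\bm_{(\lambda,0)(0,1)^{d}}\mid\lambda\vdash d\}$ is linearly independent for every $d$.

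For independence I would exhibit a triangularity with respect to the dominance order. Expanding the monomial symmetric function $m_{(\lambda,0)(0,1)^{d}}$ in the basis of monomials $\prod_{i}m_{(a_{i},b_{i})}$ in the free generators and reducing modulo $(m_{(a,b)}\mid a\neq b)$ annihilates every product containing an off-diagonal factor; the surviving products $\prod_{i}\bm_{(\mu_{i},\mu_{i})}$ are precisely those obtained by partitioning the $x$-parts $\lambda_{1},\dots,\lambda_{l}$ into blocks and absorbing into each block exactly as many of the $d$ unit $y$-parts $(0,1)$ as are needed to turn it into a diagonal part whose height is the sum of the $\lambda_{i}$ in that block (so that its $x$- and $y$-degrees agree), the resulting heights forming a partition $\mu$ that is a coarsening of $\lambda$. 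Since a coarsening of $\lambda$ dominates $\lambda$, and the finest grouping (every $x$-part alone) returns $\mu=\lambda$ with a nonzero coefficient $c_{\lambda}$, one obtains
\begin{align*}
\bm_{(\lambda,0)(0,1)^{|\lambda|}}=c_{\lambda}\prod_{i}\bm_{(\lambda_{i},\lambda_{i})}+\sum_{\mu\rhd\lambda}c_{\lambda\mu}\prod_{i}\bm_{(\mu_{i},\mu_{i})},\qquad c_{\lambda}\neq0.
\end{align*}
This change of basis from $\{\bm_{(\lambda,0)(0,1)^{d}}\}$ to $\{\prod_{i}\bm_{(\mu_{i},\mu_{i})}\}$ is triangular with nonzero diagonal entries, hence invertible, which proves the claim.

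The crux is this last paragraph: I must control the expansion of $m_{(\lambda,0)(0,1)^{d}}$ precisely enough to see that the surviving terms are exactly the coarsenings of $\lambda$, to check $c_{\lambda}\neq0$, and to confirm dominance-monotonicity. The multiplication formula $m_{(a,b)}m_{\Lambda}=(c_{(a,b)}+1)m_{(a,b)\Lambda}+\sum_{(i,j)}(c_{(a+i,b+j)}+1)m_{(a+i,b+j)\Lambda\setminus(i,j)}$ established above is the natural engine here: building $m_{(\lambda,0)(0,1)^{d}}$ up by successive multiplications realizes each step as either adjoining a part or merging two parts, and only the merges that produce diagonal parts $(k,k)$ survive the quotient, which is exactly the coarsening bookkeeping needed. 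The bivariate monomial-to-power-sum transition underlying these points is the analogue of the classical triangularity between monomial and power-sum symmetric functions, and I expect the same dominance argument to carry over.
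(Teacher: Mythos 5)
Your proposal is correct, and it runs the argument in the opposite direction from the paper: both proofs pivot on the same dimension count $\dim\bar{S}_{2d}=p(d)$ coming from $\bar{S}\cong\bb{C}[\bm_{(a,a)}\mid a\geq1]$, but the paper proves \emph{spanning} directly (two inductions with Lemma 2.4: first stripping off-diagonal parts from an arbitrary $\bm_{\Lambda}$ to reach diagonal-part monomials, then a descending induction on $\sum_{i}b_{i}$ to reach the $\bm_{(\lambda,0)(0,1)^{|\lambda|}}$) and gets independence for free, whereas you prove \emph{independence} directly and get spanning for free. The crux you flag is genuine but closable: the one-part functions $m_{(a,b)}$ play the role of power sums, and the product of generators $\prod_{k}m_{(a_{k},b_{k})}$ expands as a sum over set partitions of the index set of multiplicity-normalized monomial functions indexed by the block sums; M\"obius inversion over the partition lattice then writes $m_{(\lambda,0)(0,1)^{|\lambda|}}$ as a signed sum, over set partitions of its $\ell(\lambda)+|\lambda|$ parts, of products $\prod_{B}m_{(\text{block sum of }B)}$. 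In $\bar{S}$ a term survives exactly when every block is diagonal, which forces each block to contain at least one $x$-part together with precisely as many copies of $(0,1)$ as the sum of its $x$-parts; the surviving terms are therefore indexed by coarsenings $\mu\trianglerighteq\lambda$ as you claim, and for $\mu=\lambda$ every contributing set partition carries the same sign $(-1)^{|\lambda|}$ (each block has M\"obius weight $(-1)^{\lambda_{i}}\lambda_{i}!$), so $c_{\lambda}\neq0$ with no possible cancellation. The trade-off is that this combinatorial control costs roughly as much as the paper's two spanning inductions, but it buys an explicit triangular change of basis between $\{\bm_{(\lambda,0)(0,1)^{|\lambda|}}\}$ and the polynomial-generator monomials $\prod_{i}\bm_{(\mu_{i},\mu_{i})}$, which the paper's argument never produces.
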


\begin{proof}
We first prove that the monomial symmetric functions of the form $\bm_{(a,a)(b,b)(c,c)\ldots}$ span $\bar{S}$. If $\Lambda$ does not contain $(a,b)$ with $a\neq b$, then $\bm_{\Lambda}$ is already of the form $\bm_{(a,a)(b,b)(c,c)\ldots}$. If $\Lambda$ contains $(a,b)$ with $a\neq b$, then we can expand $\bm_{\Lambda}$ in terms of $\bm_{\Phi}$ with $\ell(\Phi)=\ell(\Lambda)-1$ by Lemma 2.4 and $\bm_{(a,b)}=0$. By induction on the length of $\Lambda$, we get an expansion of $\bm_{\Lambda}$ in terms of monomial symmetric functions of the form $\bm_{(a,a)(b,b)(c,c)\ldots}$.

We next prove that we can expand $\bm_{(a,a)(b,b)(c,c)\ldots}$ in terms of $\bm_{(\lambda,0)(0,1)^{|\lambda|}}$'s. More generally, we show that monomial symmetric functions of the form $\bm_{(a_{1},b_{1})\ldots(a_{l},b_{l})(0,1)^{m}}$ with $a_{i}\geq b_{i}$ for any $i$ can be written as a linear combination of $\bm_{(\lambda,0)(0,1)^{|\lambda|}}$'s. We prove this claim by induction on $d=\sum_{i}b_{i}$. If $d=0$, then there is nothing to prove. 

Assume $d>0$ and the claim holds for smaller $d$. Then at least one of $b_{i}$ is positive. We can assume $b_{1}>0$. By Lemma 2.4, we have 
\begin{align*}
m_{(a_{1},b_{1}-1)}m_{(a_{2},b_{2})\ldots(a_{l},b_{l})(0,1)^{m+1}}&=c_{0}m_{(a_{1},b_{1}-1)(a_{2},b_{2})\ldots(a_{l},b_{l})(0,1)^{m+1}}+c_{1}m_{(a_{1},b_{1})\ldots(a_{l},b_{l})(0,1)^{m}} \\ &\hspace{2em}+\sum_{i=2}^{l}c_{i}m_{(a_{2},b_{2})\ldots(a_{1}+a_{i},b_{1}+b_{i}-1)\ldots(a_{l},b_{l})(0,1)^{m+1}}
\end{align*}
for some $c_{0},c_{1},\ldots,c_{l}\in\bb{Z}$ with $c_{1}\neq0$. By $\bm_{(a_{1},b_{1}-1)}=0$ and the induction hypothesis, we can expand $\bm_{(a_{1},b_{1})\ldots(a_{l},b_{l})(0,1)^{m}}$ in terms of $\bm_{(\lambda,0)(0,1)^{|\lambda|}}$'s. 

Since we have $\bar{S}\cong\bb{C}[m_{(a,a)}\mid a\in\bb{Z}_{>0}]$, the dimension of the degree $2k$-component of $\bar{S}$ is given by the number of partitions of $k$. Hence $\{\bm_{(\lambda,0)(0,1)^{|\lambda|}}\mid|\lambda|=k\}$ is linearly independent. This proves the lemma.
\end{proof}

In order to simplify some formulas, we understand that $1/x!=0$ for $x<0$ in the below.
\begin{lem}
For $k\geq l>0$ and $\lambda=(1^{\alpha_{1}}2^{\alpha_{2}}\ldots)$, we have 
\begin{align*}
\bm_{(k,l)(\lambda,0)(0,1)^{|\lambda|+k-l}}=\sum_{\mu=(1^{\beta_{1}}2^{\beta_{2}}\ldots)}\frac{(-1)^{l}l!(\beta_{|\lambda|-|\mu|+k}+1)}{(l-\ell(\lambda)+\ell(\mu))!\prod_{i}(\alpha_{i}-\beta_{i})!}\bm_{(|\lambda|-|\mu|+k,0)(\mu,0)(0,1)^{|\lambda|+k}}.
\end{align*}
Here, $\mu$ runs over all the partitions. By the above convention, only partitions satisfying $\ell(\lambda)-\ell(\mu)\leq l$ and $\mu\preceq\lambda$ contribute. 
\end{lem}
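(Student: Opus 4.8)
The plan is to work throughout in $\bar{S}=S/(m_{(a,b)}\mid a\neq b)$ and to prove the identity by induction on $l$, the engine being the multiplication formula of Lemma 2.4 combined with the vanishing $\bm_{(a,b)}=0$ for $a\neq b$. The key observation is that $k\geq l$ forces $k>l-1$, so $\bm_{(k,l-1)}=0$ in $\bar{S}$. I would therefore apply Lemma 2.4 to the vanishing product
$$0=\bm_{(k,l-1)}\,\bm_{(\lambda,0)(0,1)^{|\lambda|+k-l+1}}$$
and read off the resulting linear relation. Among the terms it produces, the one obtained by absorbing a factor $(0,1)$ into $(k,l-1)$ is precisely $\bm_{(k,l)(\lambda,0)(0,1)^{|\lambda|+k-l}}$, occurring with coefficient $1$ since $(k,l)$ is not a part of the bipartite partition. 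Solving for it expresses the left-hand side through $\bm_{(k,l-1)(\lambda,0)(0,1)^{|\lambda|+k-l+1}}$ together with, for each distinct part $\lambda_{r}$ of $\lambda$, the term $\bm_{(k+\lambda_{r},l-1)(\lambda^{(r)},0)(0,1)^{|\lambda|+k-l+1}}$, where $\lambda^{(r)}$ is $\lambda$ with one part of size $\lambda_{r}$ deleted.

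For the base case $l=1$ I would instead feed the generator $(k,0)$ into Lemma 2.4 (again $\bm_{(k,0)}=0$, as $k\geq 1$). Now the relevant terms on the right are already basis vectors of the form $\bm_{(\nu,0)(0,1)^{|\nu|}}$, and one checks directly that their coefficients $-(\alpha_{k}+1)$ and $-(\alpha_{k+\lambda_{r}}+1)$ match the claimed formula on its two allowed ranges: $\ell(\mu)=\ell(\lambda)$, which forces $\mu=\lambda$, and $\ell(\mu)=\ell(\lambda)-1$, which forces $\mu=\lambda^{(r)}$.

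For the inductive step $l\geq 2$ the decisive simplification is that, since $l-1\geq 1$, neither $(k,l-1)$ nor $(k+\lambda_{r},l-1)$ can equal a part of $\Lambda=(\lambda,0)(0,1)^{|\lambda|+k-l+1}$; hence every multiplicity factor of Lemma 2.4 is $1$ and the relation becomes
$$\bm_{(k,l)(\lambda,0)(0,1)^{|\lambda|+k-l}}=-\bm_{(k,l-1)(\lambda,0)(0,1)^{|\lambda|+k-l+1}}-\sum_{\lambda_{r}}\bm_{(k+\lambda_{r},l-1)(\lambda^{(r)},0)(0,1)^{|\lambda|+k-l+1}}.$$
I would then apply the induction hypothesis to every term on the right. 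All of them expand over one and the same family of basis vectors $\bm_{(|\lambda|-|\mu|+k,0)(\mu,0)(0,1)^{|\lambda|+k}}$: the index shifts match because $|\lambda^{(r)}|-|\mu|+(k+\lambda_{r})=|\lambda|-|\mu|+k$ and $|\lambda^{(r)}|+(k+\lambda_{r})=|\lambda|+k$, so the distinguished index $\beta_{|\lambda|-|\mu|+k}$ is unchanged.

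It then remains to collect, for each $\mu$, the total coefficient of $\bm_{(|\lambda|-|\mu|+k,0)(\mu,0)(0,1)^{|\lambda|+k}}$. Writing $D=l-\ell(\lambda)+\ell(\mu)$, the term from $\lambda^{(r)}$ contributes a factor $(\alpha_{\lambda_{r}}-\beta_{\lambda_{r}})/\prod_{i}(\alpha_{i}-\beta_{i})!$, where the convention $1/x!=0$ for $x<0$ absorbs the boundary case $\beta_{\lambda_{r}}=\alpha_{\lambda_{r}}$ uniformly, and summing over distinct parts uses $\sum_{\lambda_{r}}(\alpha_{\lambda_{r}}-\beta_{\lambda_{r}})=\ell(\lambda)-\ell(\mu)$. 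Combining this with the first term and applying $\tfrac{D!}{(D-1)!}+(\ell(\lambda)-\ell(\mu))=D+(\ell(\lambda)-\ell(\mu))=l$ collapses the two contributions to the asserted coefficient $\tfrac{(-1)^{l}l!(\beta_{|\lambda|-|\mu|+k}+1)}{D!\prod_{i}(\alpha_{i}-\beta_{i})!}$. I expect the only real difficulty to be organizational rather than conceptual: keeping the multiplicity factors of Lemma 2.4 and the index shifts straight so that the separate induction-hypothesis expansions genuinely land on the same basis, after which the factorial arithmetic telescopes exactly as above.
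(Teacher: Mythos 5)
Your proposal is correct and follows essentially the same route as the paper: induction on $l$, with the base case $l=1$ obtained from Lemma 2.4 applied to $\bm_{(k,0)}=0$ and the inductive step from Lemma 2.4 applied to $\bm_{(k,l-1)}=0$ (valid since $k\geq l>l-1$), followed by the induction hypothesis and the same factorial recombination $D+(\ell(\lambda)-\ell(\mu))=l$, which is exactly the paper's identity (\ref{eq}). Your extra checks that the multiplicity coefficients in Lemma 2.4 all equal $1$ and that the index shifts land on a common basis are correct and only make explicit what the paper leaves implicit.
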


\begin{proof}
We prove this formula by induction on $l$. Assume $l=1$. By using Lemma 2.4 for $(a,b)=(k,0)$ and $\bm_{(k,0)}=0$, we have $$\bm_{(k,1)(\lambda,0)(0,1)^{|\lambda|+k-1}}=-(\alpha_{k}+1)\bm_{(k,0)(\lambda,0)(0,1)^{|\lambda|+k}}-\sum_{i:\alpha_{i}>0}(\alpha_{k+i}+1)\bm_{(k+i,0)(\lambda\setminus i,0)(0,1)^{|\lambda|+k}}.$$ The first term is equal to the contribution of $\mu=\lambda$ and the second term is equal to the contribution of $\mu=\lambda\setminus i$. 

Assume $l\geq2$ and the formula holds for smaller $l$. By using Lemma 2.4 for $(a,b)=(k,l-1)$ and the induction hypothesis, we have 
\begin{align*}
\bm_{(k,l)(\lambda,0)(0,1)^{|\lambda|+k-l}}&=-\bm_{(k,l-1)(\lambda,0)(0,1)^{|\lambda|+k-l+1}}-\sum_{j:\alpha_{j}>0}\bm_{(k+j,l-1)(\lambda\setminus j,0)(0,1)^{|\lambda|+k-l+1}} \\ 
&=\sum_{\mu=(1^{\beta_{1}}2^{\beta_{2}}\ldots)}\bm_{(|\lambda|-|\mu|+k,0)(\mu,0)(0,1)^{|\lambda|+k}}\Biggl\{\frac{(-1)^{l}(l-1)!(\beta_{|\lambda|-|\mu|+k}+1)}{(l-\ell(\lambda)+\ell(\mu)-1)!\prod_{i}(\alpha_{i}-\beta_{i})!} \\
&\hspace{3em} +\sum_{j}\frac{(-1)^{l}(l-1)!(\beta_{|\lambda|-|\mu|+k}+1)}{(l-\ell(\lambda)+\ell(\mu))!(\alpha_{j}-\beta_{j}-1)!\prod_{i\neq j}(\alpha_{i}-\beta_{i})!}\Biggr\} \\ 
&=\sum_{\mu}\frac{(-1)^{l}l!(\beta_{|\lambda|-|\mu|+k}+1)}{(l-\ell(\lambda)+\ell(\mu))!\prod_{i}(\alpha_{i}-\beta_{i})!}\bm_{(|\lambda|-|\mu|+k,0)(\mu,0)(0,1)^{|\lambda|+k}}.
\end{align*}
Here, in the last equality, we used the formula 
\begin{align}\label{eq}
\frac{(n_{0}+\cdots+n_{a})!}{n_{0}!\cdots n_{a}!}=\sum_{j=0}^{a}\frac{(n_{0}+\cdots n_{a}-1)!}{(n_{j}-1)!\prod_{i\neq j}n_{i}!}
\end{align}
for $n_{0},\ldots,n_{a}\in\bb{Z}_{\geq0}$ with $n_{0}+\cdots+n_{a}>0$. We applied it for $n_{0}=l-\ell(\lambda)+\ell(\mu)$ and $n_{i}=\alpha_{i}-\beta_{i}$ for $i\geq1$.
\end{proof}

For $\mu=(1^{\beta_{1}}2^{\beta_{2}}\ldots)$, $\nu=(1^{\gamma_{1}}2^{\gamma_{2}}\ldots)$, and $x\in\bb{Z}_{\geq|\nu|}$, we set $$f^{\mu}_{\nu}(x)=\frac{(x-|\nu|)!(x-|\mu|+1)}{(x-|\nu|-\ell(\nu)+\ell(\mu)+1)!\prod_{i}(\gamma_{i}-\beta_{i})!}.$$ We remark that we have $f^{\mu}_{\nu}(x)=0$ if $\mu\npreceq\nu$ and $f^{\nu}_{\nu}(x)=1$. We denote the partition $(1^{\beta_{1}}\ldots(j-1)^{\beta_{j-1}}j^{\beta_{j}+1}(j+1)^{\beta_{j+1}}\ldots)$ by $\mu\cup j$. 

\begin{lem}
For $\mu=(1^{\beta_{1}}2^{\beta_{2}}\ldots)$, $\nu=(1^{\gamma_{1}}2^{\gamma_{2}}\ldots)$, and $x\in\bb{Z}_{\geq|\nu|}$, we have
$$f^{\mu}_{\nu}(x+1)-f^{\mu}_{\nu}(x)=\sum_{j}f^{\mu\cup j}_{\nu}(x).$$
\end{lem}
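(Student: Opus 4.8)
The plan is to prove the identity by a direct computation with factorials, after first disposing of the degenerate configurations. Since $f^{\mu}_{\nu}(x)=0$ unless $\mu\preceq\nu$, and since $\mu\preceq\mu\cup j$ implies that $\mu\cup j\npreceq\nu$ as soon as $\mu\npreceq\nu$, both sides vanish identically in the range $\mu\npreceq\nu$; so I would assume throughout that $\beta_{i}\leq\gamma_{i}$ for all $i$, and set $d:=\ell(\nu)-\ell(\mu)\geq0$ together with $D:=\prod_{i}(\gamma_{i}-\beta_{i})!$, a genuine positive integer in this range.

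First I would record the two bookkeeping identities
$$\sum_{j}(\gamma_{j}-\beta_{j})=\ell(\nu)-\ell(\mu)=d,\qquad \sum_{j}j(\gamma_{j}-\beta_{j})=|\nu|-|\mu|,$$
which are immediate from the definitions of $\ell$ and $|\cdot|$. Next I would expand the right-hand side. For each $j$ one has $|\mu\cup j|=|\mu|+j$ and $\ell(\mu\cup j)=\ell(\mu)+1$, while the denominator product for $\mu\cup j$ differs from that for $\mu$ only in the $j$-th factor, $(\gamma_{j}-\beta_{j})!$ becoming $(\gamma_{j}-\beta_{j}-1)!$. With the convention $1/m!=0$ for $m<0$ this yields the single uniform formula
$$f^{\mu\cup j}_{\nu}(x)=\frac{(x-|\nu|)!\,(x-|\mu|-j+1)(\gamma_{j}-\beta_{j})}{(x-|\nu|-d+2)!\,D},$$
the factor $\gamma_{j}-\beta_{j}$ automatically suppressing the $j$ with $\mu\cup j\npreceq\nu$. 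Summing over $j$ and substituting the two bookkeeping identities collapses $\sum_{j}f^{\mu\cup j}_{\nu}(x)$ to the common prefactor $\frac{(x-|\nu|)!}{(x-|\nu|-d+2)!\,D}$ times the linear quantity $d(x-|\mu|+1)+(|\mu|-|\nu|)$.

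It then remains to show that the left-hand side $f^{\mu}_{\nu}(x+1)-f^{\mu}_{\nu}(x)$ equals the same expression. Putting both terms over the common denominator $(x-|\nu|-d+2)!\,D$, via $(x-|\nu|-d+2)!=(x-|\nu|-d+2)(x-|\nu|-d+1)!$ and $(x+1-|\nu|)!=(x+1-|\nu|)(x-|\nu|)!$, and simplifying the numerator should reduce the difference to precisely $\frac{(x-|\nu|)!}{(x-|\nu|-d+2)!\,D}\bigl[d(x-|\mu|+1)+(|\mu|-|\nu|)\bigr]$, matching the right-hand side.

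The one genuinely delicate point I would treat with care is the interaction with the convention $1/m!=0$: because of it $f^{\mu}_{\nu}$ is not literally a polynomial in $x$, and the cancellation $(x-|\nu|-d+1)!(x-|\nu|-d+2)=(x-|\nu|-d+2)!$ used above is valid only when $x-|\nu|-d+1\geq0$. For smaller $x$ the relevant factorial arguments go negative and those terms simply drop out. The cleanest way to make the argument uniform when $d\geq2$ is to rewrite $f^{\mu}_{\nu}(x)=\frac{x-|\mu|+1}{D}\prod_{r=0}^{d-2}(x-|\nu|-r)$, a genuine polynomial agreeing with the factorial expression for every integer $x\geq|\nu|$ (both vanish for $|\nu|\leq x\leq|\nu|+d-2$), so that the whole computation becomes a polynomial identity valid for all $x$. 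The two boundary values $d=0$ (which forces $\mu=\nu$ and makes both sides $0$) and $d=1$ (where the right-hand side reduces to the single contribution $f^{\nu}_{\nu}=1$ and the left-hand side is the first difference of the linear polynomial $x-|\mu|+1$) I would then dispose of by hand.
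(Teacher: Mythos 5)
Your proposal is correct and follows essentially the same route as the paper: put $f^{\mu}_{\nu}(x+1)-f^{\mu}_{\nu}(x)$ over the common denominator $(x-|\nu|-\ell(\nu)+\ell(\mu)+2)!\prod_{i}(\gamma_{i}-\beta_{i})!$ and identify the resulting bracket $(\ell(\nu)-\ell(\mu))(x-|\mu|+1)+|\mu|-|\nu|$ with $\sum_{j}(\gamma_{j}-\beta_{j})(x-|\mu\cup j|+1)$ via the two bookkeeping identities. Your extra care with the convention $1/m!=0$ (and the polynomial rewriting of the falling factorial) is a sound, slightly more explicit treatment of a point the paper handles implicitly, but it does not change the argument.
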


\begin{proof}
We calculate as:
\begin{align*}
f^{\mu}_{\nu}(x+1)-f^{\mu}_{\nu}(x)&=\frac{(x-|\nu|)!}{(x-|\nu|-\ell(\nu)+\ell(\mu)+2)!\prod_{i}(\gamma_{i}-\beta_{i})!} \\
&\hspace{1em}\times\bigl\{(x+1-|\nu|)(x-|\mu|+2)-(x-|\nu|-\ell(\nu)+\ell(\mu)+2)(x-|\mu|+1)\bigr\} \\
&=\frac{(x-|\nu|)!\bigl\{(\ell(\nu)-\ell(\mu))(x-|\mu|+1)-|\nu|+|\mu|\bigr\}}{(x-|\nu|-\ell(\nu)+\ell(\mu)+2)!\prod_{i}(\gamma_{i}-\beta_{i})!} \\
&=\frac{(x-|\nu|)!\bigl\{\sum_{j}(\gamma_{j}-\beta_{j})(x-|\mu|-j+1)\bigr\}}{(x-|\nu|-\ell(\nu)+\ell(\mu)+2)!\prod_{i}(\gamma_{i}-\beta_{i})!} \\
&=\sum_{j}\frac{(x-|\nu|)!(x-|\mu\cup j|+1)}{(x-|\nu|-\ell(\nu)+\ell(\mu\cup j)+1)!(\gamma_{j}-\beta_{j}-1)!\prod_{i\neq j}(\gamma_{i}-\beta_{i})!} \\
&=\sum_{j}f^{\mu\cup j}_{\nu}(x).
\end{align*}
\end{proof}

\begin{lem}
For $\mu=(1^{\beta_{1}}2^{\beta_{2}}\ldots)\preceq\lambda=(1^{\alpha_{1}}2^{\alpha_{2}}\ldots)$, we have
\begin{align*}
\sum_{\substack{\nu=(1^{\gamma_{1}}2^{\gamma_{2}}\ldots) \\ \mu\preceq\nu\preceq\lambda}}(-1)^{\ell(\nu)+\ell(\lambda)}f^{\mu}_{\nu}(k+|\lambda|)\frac{(\ell(\lambda)-\ell(\nu)+|\lambda|-|\nu|)!}{(|\lambda|-|\nu|)!\prod_{i}(\alpha_{i}-\gamma_{i})!}=\frac{k!}{(k-\ell(\lambda)+\ell(\mu))!\prod_{i}(\alpha_{i}-\beta_{i})!}
\end{align*}
\end{lem}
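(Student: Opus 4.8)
The plan is to treat both sides as polynomials in $k$ (with $\lambda$ fixed) and to prove their equality by induction on $\ell(\lambda)-\ell(\mu)$, the induction being driven by the finite-difference relation of Lemma 2.7. Write $L_{\mu}(k)$ for the left-hand side and $R_{\mu}(k)=k!/\bigl((k-\ell(\lambda)+\ell(\mu))!\prod_{i}(\alpha_{i}-\beta_{i})!\bigr)$ for the right-hand side. Using the factorial formula for $f^{\mu}_{\nu}$ together with the convention $1/x!=0$, each summand of $L_{\mu}$ is a falling-factorial polynomial in $k$, so $L_{\mu}$ and $R_{\mu}$ are genuine polynomials and it suffices to prove their equality as such.

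First I would compute the forward difference $\Delta F(k):=F(k+1)-F(k)$ of each side. Applying Lemma 2.7 with $x=k+|\lambda|$, and using the vanishing $f^{\mu\cup j}_{\nu}=0$ unless $\mu\cup j\preceq\nu$, the sum over $\nu$ reorganizes into $\Delta L_{\mu}(k)=\sum_{j}L_{\mu\cup j}(k)$, where $j$ runs over those indices with $\beta_{j}<\alpha_{j}$ (the prefactor $(\ell(\lambda)-\ell(\nu)+|\lambda|-|\nu|)!/\bigl((|\lambda|-|\nu|)!\prod_{i}(\alpha_{i}-\gamma_{i})!\bigr)$ does not involve $\mu$, so it is unchanged). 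On the right-hand side a direct computation gives $\Delta R_{\mu}(k)=(\ell(\lambda)-\ell(\mu))\,k!/\bigl((k-\ell(\lambda)+\ell(\mu)+1)!\prod_{i}(\alpha_{i}-\beta_{i})!\bigr)$, whereas $\sum_{j}R_{\mu\cup j}(k)$ carries the factor $\sum_{j}(\alpha_{j}-\beta_{j})=\ell(\lambda)-\ell(\mu)$; hence $\Delta R_{\mu}=\sum_{j}R_{\mu\cup j}$ as well, so both sides obey the same recursion.

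Now I would induct on $\ell(\lambda)-\ell(\mu)$. The base case $\mu=\lambda$ is immediate, since the sum collapses to the single term $\nu=\mu=\lambda$ with $f^{\lambda}_{\lambda}\equiv 1$, giving $L_{\lambda}\equiv 1\equiv R_{\lambda}$. For the inductive step the hypothesis yields $L_{\mu\cup j}=R_{\mu\cup j}$ for all $j$, hence $\Delta(L_{\mu}-R_{\mu})=0$; a polynomial with vanishing forward difference is constant, so $L_{\mu}-R_{\mu}$ is independent of $k$. To show this constant is zero I would evaluate both polynomials at the single point $k^{\ast}=|\mu|-1-|\lambda|$, i.e. at $x=k+|\lambda|=|\mu|-1$. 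The numerator of $f^{\mu}_{\nu}$ contains the factor $x-|\mu|+1$, which vanishes at $x=|\mu|-1$ for every $\nu\neq\mu$, while $f^{\mu}_{\mu}\equiv 1$; thus only the diagonal term survives and $L_{\mu}(k^{\ast})=(-1)^{\ell(\lambda)+\ell(\mu)}(\ell(\lambda)-\ell(\mu)+|\lambda|-|\mu|)!/\bigl((|\lambda|-|\mu|)!\prod_{i}(\alpha_{i}-\beta_{i})!\bigr)$. Evaluating the falling factorial $R_{\mu}(k^{\ast})$ at $k^{\ast}=-(|\lambda|-|\mu|)-1$ produces the sign $(-1)^{\ell(\lambda)-\ell(\mu)}=(-1)^{\ell(\lambda)+\ell(\mu)}$ and precisely the same factorial ratio, so the constant vanishes and $L_{\mu}=R_{\mu}$.

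The main obstacle is this last step. The shared recursion only determines each side up to an additive constant, so the real content is to locate one value of $k$ at which $L_{\mu}$ has a transparent closed form. The choice $x=|\mu|-1$ is the crucial trick: it uses the factor $x-|\mu|+1$ built into $f^{\mu}_{\nu}$ to annihilate every off-diagonal term at once. One must only take care that this is an evaluation of the interpolating polynomials rather than of the factorial formula itself (since $x=|\mu|-1<|\nu|$ when $\nu\neq\mu$); the compatibility of the convention $1/x!=0$ with falling-factorial interpolation is exactly what legitimizes it.
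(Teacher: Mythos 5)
Your proof is correct, and while it shares the paper's overall skeleton --- induction on $\ell(\lambda)-\ell(\mu)$ with the base case $\mu=\lambda$ trivial, and Lemma 2.7 plus the identity $\Delta R_{\mu}=\sum_{j}R_{\mu\cup j}$ (which is exactly the paper's multinomial recursion (2.1)) showing that both sides satisfy the same first-order difference equation in $k$ --- it diverges from the paper at the one genuinely hard step, namely pinning down the additive constant. The paper anchors the recursion at $k=0$, where the right-hand side vanishes, and then must prove $F^{\mu}_{\lambda}(0)=0$ by a separate combinatorial argument: after massaging the sum it reduces to the vanishing of the coefficient of $x^{\sum_{i}n_{i}-1}$ in $\prod_{i}((x+1)^{i+1}-1)^{n_{i}}$, a small generating-function identity. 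You instead observe that both sides are polynomials in $k$ (falling factorials, consistently with the convention $1/x!=0$ on the stated domain), so the constant can be read off at any point, and you choose $k^{\ast}=|\mu|-1-|\lambda|$, where the built-in factor $x-|\mu|+1$ of $f^{\mu}_{\nu}$ kills every term except $\nu=\mu$; the resulting single term matches $R_{\mu}(k^{\ast})=(-1)^{\ell(\lambda)+\ell(\mu)}(\ell(\lambda)-\ell(\mu)+|\lambda|-|\mu|)!/\bigl((|\lambda|-|\mu|)!\prod_{i}(\alpha_{i}-\beta_{i})!\bigr)$ exactly (I checked the sign and the factorial ratio). Your route buys a cleaner finish --- no auxiliary binomial identity --- at the price of the polynomial-extension bookkeeping you flag yourself: one must verify that for $\nu\neq\mu$ the factorial expression for $f^{\mu}_{\nu}$ agrees with its falling-factorial polynomial on all of $x\in\bb{Z}_{\geq|\nu|}$ (it does, since both vanish on $|\nu|\leq x<|\nu|+\ell(\nu)-\ell(\mu)-1$) before evaluating at the out-of-range point $x=|\mu|-1$; the paper's route stays entirely inside the domain where the factorial convention applies but pays for it with the extra computation. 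Both are complete proofs.
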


\begin{proof}
We prove this formula by induction on $\ell=\ell(\lambda)-\ell(\mu)$. If $\ell=0$, then the formula is trivial since $\mu=\nu=\lambda$ and $f^{\nu}_{\nu}(x)=1$. Let us assume $\ell>0$ and the formula holds for smaller $\ell$. We set $$F^{\mu}_{\lambda}(k)=\sum_{\substack{\nu=(1^{\gamma_{1}}2^{\gamma_{2}}\ldots) \\ \mu\preceq\nu\preceq\lambda}}(-1)^{\ell(\nu)+\ell(\lambda)}f^{\mu}_{\nu}(k+|\lambda|)\frac{(\ell(\lambda)-\ell(\nu)+|\lambda|-|\nu|)!}{(|\lambda|-|\nu|)!\prod_{i}(\alpha_{i}-\gamma_{i})!}.$$ By Lemma 2.7 and the induction hypothesis, we have 
\begin{align*}
F^{\mu}_{\lambda}(k+1)-F^{\mu}_{\lambda}(k)&=\sum_{j}F^{\mu\cup j}_{\lambda}(k) \\
&=\sum_{j}\frac{k!}{(k-\ell(\lambda)+\ell(\mu)+1)!(\alpha_{j}-\beta_{j}-1)!\prod_{i\neq j}(\alpha_{i}-\beta_{i})!} \\
&=\frac{(k+1)!}{(k+1-\ell(\lambda)+\ell(\mu))!\prod_{i}(\alpha_{i}-\beta_{i})!}-\frac{k!}{(k-\ell(\lambda)+\ell(\mu))!\prod_{i}(\alpha_{i}-\beta_{i})!}.
\end{align*}
Here, the last equality follows from (\ref{eq}). Hence it is enough to prove the case of $k=0$, that is, $F^{\mu}_{\lambda}(0)=0$ since we assumed $\ell(\lambda)-\ell(\mu)>0$. We set $n_{i}=\alpha_{i}-\beta_{i}$ and $m_{i}=\alpha_{i}-\gamma_{i}$. Then 
\begin{align*}
F^{\mu}_{\lambda}(0)&=\sum_{\mu\preceq\nu\preceq\lambda}(-1)^{\ell(\nu)+\ell(\lambda)}\frac{(|\lambda|-|\nu|)!(|\lambda|-|\mu|+1)(\ell(\lambda)-\ell(\nu)+|\lambda|-|\nu|)!}{(|\lambda|-|\nu|-\ell(\nu)+\ell(\mu)+1)!(|\lambda|-|\nu|)!\prod_{i}(\gamma_{i}-\beta_{i})!(\alpha_{i}-\gamma_{i})!} \\
&=(|\lambda|-|\mu|+1)\sum_{0\leq m_{i}\leq n_{i}}(-1)^{\sum_{i}m_{i}}\frac{(\sum_{i}(i+1)m_{i})!}{(\sum_{i}(i+1)m_{i}-\sum_{i}n_{i}+1)!\prod_{i}m_{i}!(n_{i}-m_{i})!} \\
&=\frac{(|\lambda|-|\mu|+1)(\sum_{i}n_{i}-1)!}{\prod_{i}n_{i}!}\sum_{0\leq m_{i}\leq n_{i}}(-1)^{\sum_{i}m_{i}}\binom{\sum_{i}(i+1)m_{i}}{\sum_{i}n_{i}-1}\prod_{i}\binom{n_{i}}{m_{i}} \\
\end{align*}

Let us consider the coefficient of $x^{\sum_{i}n_{i}-1}$ in the expansion of $\prod_{i}((x+1)^{i+1}-1)^{n_{i}}$. Since we have $\prod_{i}((x+1)^{i+1}-1)^{n_{i}}=\prod_{i}(i+1)^{n_{i}}\cdot x^{\sum_{i}n_{i}}+(\mbox{higher order terms})$, the coefficient of $x^{\sum_{i}n_{i}-1}$ is $0$. On the other hand, we calculate as: 
\begin{align*}
\prod_{i}((x+1)^{i+1}-1)^{n_{i}}&=\prod_{i}\left(\sum_{0\leq m_{i}\leq n_{i}}(-1)^{n_{i}-m_{i}}\binom{n_{i}}{m_{i}}(x+1)^{(i+1)m_{i}}\right)\\
&=\sum_{0\leq m_{i}\leq n_{i}}(-1)^{\sum_{i}(n_{i}-m_{i})}\prod_{i}\binom{n_{i}}{m_{i}}\cdot(x+1)^{\sum_{i}(i+1)m_{i}}\\
&=\sum_{k\geq0}\sum_{0\leq m_{i}\leq n_{i}}(-1)^{\sum_{i}(n_{i}-m_{i})}\binom{\sum_{i}(i+1)m_{i}}{k}\prod_{i}\binom{n_{i}}{m_{i}}\cdot x^{k}.
\end{align*}
Hence we have $$\sum_{0\leq m_{i}\leq n_{i}}(-1)^{\sum_{i}m_{i}}\binom{\sum_{i}(i+1)m_{i}}{\sum_{i}n_{i}-1}\prod_{i}\binom{n_{i}}{m_{i}}=0.$$ This implies $F^{\mu}_{\lambda}(0)=0.$
\end{proof}

\begin{lem}
For any $k\geq0$, $l>0$, and $\lambda=(1^{\alpha_{1}}2^{\alpha_{2}}\ldots)$ with $|\lambda|+k-l\geq0$, we have 
\begin{align*}
\bm_{(k,l)(\lambda,0)(0,1)^{|\lambda|+k-l}}&=\sum_{\substack{\mu=(1^{\beta_{1}}2^{\beta_{2}}\ldots)\\ \mu\preceq\lambda}}(\beta_{|\lambda|-|\mu|+k}+1)\bm_{(|\lambda|-|\mu|+k,0)(\mu,0)(0,1)^{|\lambda|+k}} \\
&\hspace{0em}\times\Biggl\{\sum_{\substack{\nu=(1^{\gamma_{1}}2^{\gamma_{2}}\ldots) \\ \mu\preceq\nu\preceq\lambda \\ |\nu|\leq|\lambda|+k-l}}(-1)^{\ell(\nu)+\ell(\lambda)+l}f^{\mu}_{\nu}(k+|\lambda|)\frac{(\ell(\lambda)-\ell(\nu)+|\lambda|-|\nu|+k-l)!}{(|\lambda|-|\nu|+k-l)!\prod_{i}(\alpha_{i}-\gamma_{i})!}\Biggr\} 
\end{align*}
\end{lem}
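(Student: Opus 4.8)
The plan is to compute $\bm_{(k,l)(\lambda,0)(0,1)^{|\lambda|+k-l}}$ in the basis of Lemma 2.5 by repeatedly applying the product rule of Lemma 2.4 together with the relations $\bm_{(a,b)}=0$ for $a\neq b$, organising the recursion as an induction. It is convenient to split into the two regimes $k\geq l$ and $k<l$: the case $k\geq l$ serves as the base of the induction, and only $k<l$ carries genuinely new information.

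For $k\geq l$ the value of $\bm_{(k,l)(\lambda,0)(0,1)^{|\lambda|+k-l}}$ is already furnished by Lemma 2.6, so the task is only to check that the double sum in the statement reproduces the single sum of Lemma 2.6. In this regime the condition $|\nu|\leq|\lambda|+k-l$ is vacuous, so the inner bracket runs over all $\nu$ with $\mu\preceq\nu\preceq\lambda$; this inner sum is exactly of the type evaluated in Lemma 2.8, and I would use that lemma to collapse it to $(-1)^{l}l!/\bigl((l-\ell(\lambda)+\ell(\mu))!\prod_{i}(\alpha_{i}-\beta_{i})!\bigr)$, matching the coefficient in Lemma 2.6. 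Already in the smallest cases the inner sum involves genuine cancellations, so Lemma 2.8 is doing real work here rather than mere bookkeeping.

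For $k<l$ I would argue by a double induction, outer on the exponent $e=|\lambda|+k-l\geq 0$ and inner on the gap $l-k>0$. The key point is to peel off the part $(k,l)$ itself rather than $(k,l-1)$ as in Lemma 2.6: since $k\neq l$ we still have $\bm_{(k,l)}=0$, so applying Lemma 2.4 to $m_{(k,l)}m_{(\lambda,0)(0,1)^{e}}$ kills the left-hand side and yields
\[
\bm_{(k,l)(\lambda,0)(0,1)^{e}}=-\sum_{i:\alpha_{i}>0}\bm_{(k+i,l)(\lambda\setminus i,0)(0,1)^{e}}-\bm_{(k,l+1)(\lambda,0)(0,1)^{e-1}} .
\]
Each term $(k+i,l)$ has gap $l-(k+i)<l-k$ and the same exponent $e$, so it is covered either by the base case $k+i\geq l$ or by the inner induction; the remaining term has gap $l+1-k$ but strictly smaller exponent $e-1$ (and is absent when $e=0$), so it is covered by the outer induction. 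This organisation never forces us to peel the diagonal part $(k,k)$, for which $\bm_{(k,k)}\neq 0$ would spoil the recursion, and it lets every term on the right be rewritten in the form of the statement.

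The main obstacle is then purely combinatorial: substituting the induction hypothesis into the displayed recursion, collecting the coefficient of a fixed basis vector $\bm_{(|\lambda|-|\mu|+k,0)(\mu,0)(0,1)^{|\lambda|+k}}$, and verifying that the resulting nested sum over $\nu$ reassembles into the claimed bracket. Here the finite-difference identity of Lemma 2.7 is the natural tool for relating the $f^{\mu}_{\nu}$'s across the inductive step, while the constraint $|\nu|\leq|\lambda|+k-l$ must be seen to emerge from the convention $1/x!=0$, i.e.\ one must confirm that every would-be contribution with $|\nu|>|\lambda|+k-l$ carries a negative factorial argument and hence vanishes. Matching the signs $(-1)^{\ell(\nu)+\ell(\lambda)+l}$, the nested factorials, and the shifting range of summation across the two regimes is where I expect essentially all of the difficulty to lie.
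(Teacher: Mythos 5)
Your main recursion for $k<l$ --- peeling off $m_{(k,l)}$ with $\bm_{(k,l)}=0$ via Lemma 2.4 to get
$\bm_{(k,l)(\lambda,0)(0,1)^{e}}=-\sum_{i:\alpha_i>0}\bm_{(k+i,l)(\lambda\setminus i,0)(0,1)^{e}}-\bm_{(k,l+1)(\lambda,0)(0,1)^{e-1}}$ --- is exactly the recursion the paper uses, and the recombination step you defer is carried out there with the Pascal-type identity (\ref{eq}) rather than Lemma 2.7 (which only enters the proof of Lemma 2.8). The genuine problem is your base case. For $k>l$ the inner bracket of the statement is \emph{not} ``exactly of the type evaluated in Lemma 2.8'': the factorial ratio is $(\ell(\lambda)-\ell(\nu)+|\lambda|-|\nu|+k-l)!/(|\lambda|-|\nu|+k-l)!$ instead of $(\ell(\lambda)-\ell(\nu)+|\lambda|-|\nu|)!/(|\lambda|-|\nu|)!$, while the argument of $f^{\mu}_{\nu}$ remains $k+|\lambda|$, so the two sums are tied to two independent parameters and take different values. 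Concretely, for $\lambda=(1)$, $\mu=\emptyset$, $k=2$, $l=1$, the sum $\sum_{\nu}(-1)^{\ell(\nu)+\ell(\lambda)}f^{\mu}_{\nu}(k+|\lambda|)\cdot(\text{shifted ratio})$ equals $1$ (which is the value $l!/\bigl((l-\ell(\lambda)+\ell(\mu))!\prod_i(\alpha_i-\beta_i)!\bigr)$ needed to match Lemma 2.6), whereas Lemma 2.8 evaluates its sum to $k!/\bigl((k-\ell(\lambda)+\ell(\mu))!\prod_i(\alpha_i-\beta_i)!\bigr)=2$. So invoking Lemma 2.8 here would produce the wrong coefficient.

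What you actually need for $k>l$ is the two-parameter identity
\[
\sum_{\mu\preceq\nu\preceq\lambda}(-1)^{\ell(\nu)+\ell(\lambda)}f^{\mu}_{\nu}(k+|\lambda|)\frac{(\ell(\lambda)-\ell(\nu)+|\lambda|-|\nu|+k-l)!}{(|\lambda|-|\nu|+k-l)!\prod_{i}(\alpha_{i}-\gamma_{i})!}=\frac{l!}{(l-\ell(\lambda)+\ell(\mu))!\prod_{i}(\alpha_{i}-\beta_{i})!},
\]
which coincides with Lemma 2.8 only at $k=l$ and would require its own proof. You cannot dodge this: the terms $(k+i,l)$ produced by your recursion routinely land at $k+i>l$, so the $k>l$ cases are genuinely used as base cases in your scheme. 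The paper avoids the issue by using Lemma 2.6 together with Lemma 2.8 only at $k=l$ and running the same recursion for \emph{all} $k\neq l$ (including $k>l$), with a single induction on $|\lambda|+\ell(\lambda)+k-l$, which drops by one in every term on the right-hand side. If you reorganise your induction that way, the problematic base case disappears and the rest of your outline matches the paper's argument.
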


\begin{proof}
We prove this formula by induction on $|\lambda|+\ell(\lambda)+k-l\geq0$. Assume $|\lambda|+\ell(\lambda)+k-l=0$. It implies that $\lambda=\emptyset$ and $k=l$. Then the formula reduces to 
\begin{align}\label{kk}
\bm_{(k,k)}=(-1)^{k}\bm_{(k,0)(0,1)^{k}}.
\end{align}
This is a special case of Lemma 2.6. 

Let us assume $|\lambda|+\ell(\lambda)+k-l>0$. If $k=l$, then by Lemma 2.6 and Lemma 2.8, we have
\begin{align*}
\bm_{(k,k)(\lambda,0)(0,1)^{|\lambda|}}&=\sum_{\mu=(1^{\beta_{1}}2^{\beta_{2}}\ldots)}\frac{(-1)^{k}k!(\beta_{|\lambda|-|\mu|+k}+1)}{(k-\ell(\lambda)+\ell(\mu))!\prod_{i}(\alpha_{i}-\beta_{i})!}\bm_{(|\lambda|-|\mu|+k,0)(\mu,0)(0,1)^{|\lambda|+k}}\\
&=\sum_{\substack{\mu=(1^{\beta_{1}}2^{\beta_{2}}\ldots)\\ \mu\preceq\lambda}}(\beta_{|\lambda|-|\mu|+k}+1)\bm_{(|\lambda|-|\mu|+k,0)(\mu,0)(0,1)^{|\lambda|+k}} \\
&\hspace{0em}\times\Biggl\{\sum_{\substack{\nu=(1^{\gamma_{1}}2^{\gamma_{2}}\ldots) \\ \mu\preceq\nu\preceq\lambda}}(-1)^{\ell(\nu)+\ell(\lambda)+k}f^{\mu}_{\nu}(k+|\lambda|)\frac{(\ell(\lambda)-\ell(\nu)+|\lambda|-|\nu|)!}{(|\lambda|-|\nu|)!\prod_{i}(\alpha_{i}-\gamma_{i})!}\Biggr\}.
\end{align*}
This implies the formula.

If $k\neq l$, then by Lemma 2.4 and the induction hypothesis, we have 
\begin{align*}
\bm_{(k,l)(\lambda,0)(0,1)^{|\lambda|+k-l}}&=-\bm_{(k,l+1)(\lambda,0)(0,1)^{|\lambda|+k-l-1}}-\sum_{j:\alpha_{j}>0}\bm_{(k+j,l)(\lambda\setminus j,0)(0,1)^{|\lambda|+k-l}} \\
&\hspace{-5em}=\sum_{\mu}(\beta_{|\lambda|-|\mu|+k}+1)\bm_{(|\lambda|-|\mu|+k,0)(\mu,0)(0,1)^{|\lambda|+k}} \\ 
&\hspace{-3em}\times\Biggl\{\sum_{\substack{\nu \\ \mu\preceq\nu\preceq\lambda \\ |\nu|\leq|\lambda|+k-l-1}}\frac{(-1)^{\ell(\nu)+\ell(\lambda)+l}f^{\mu}_{\nu}(k+|\lambda|)(\ell(\lambda)-\ell(\nu)+|\lambda|-|\nu|+k-l-1)!}{(|\lambda|-|\nu|+k-l-1)!\prod_{i}(\alpha_{i}-\gamma_{i})!} \\
&\hspace{-3em}+\sum_{j}\sum_{\substack{\nu \\ \mu\preceq\nu\preceq\lambda\setminus j \\ |\nu|\leq|\lambda|+k-l}}\frac{(-1)^{\ell(\nu)+\ell(\lambda)+l}f^{\mu}_{\nu}(k+|\lambda|)(\ell(\lambda)-\ell(\nu)+|\lambda|-|\nu|+k-l-1)!}{(|\lambda|-|\nu|+k-l)!(\alpha_{j}-\gamma_{j}-1)!\prod_{i\neq j}(\alpha_{i}-\gamma_{i})!}\Biggr\} \\
&\hspace{-5em}=\sum_{\mu}(\beta_{|\lambda|-|\mu|+k}+1)\bm_{(|\lambda|-|\mu|+k,0)(\mu,0)(0,1)^{|\lambda|+k}} \\
&\hspace{-3em}\times\Biggl\{\sum_{\substack{\nu \\ \mu\preceq\nu\preceq\lambda \\ |\nu|\leq|\lambda|+k-l}}(-1)^{\ell(\nu)+\ell(\lambda)+l}f^{\mu}_{\nu}(k+|\lambda|)\frac{(\ell(\lambda)-\ell(\nu)+|\lambda|-|\nu|+k-l)!}{(|\lambda|-|\nu|+k-l)!\prod_{i}(\alpha_{i}-\gamma_{i})!}\Biggr\}. 
\end{align*}
Here, the last equality follows from (\ref{eq}). This completes the proof of the formula.
\end{proof}

\begin{lem}
For $k\in\bb{Z}_{>0}$ and $\lambda=(1^{\alpha_{1}}2^{\alpha_{2}}\ldots)$, we have 
\begin{align*}
\bm_{(k,k)}\bm_{(\lambda,0)(0,1)^{|\lambda|}}=\sum_{\substack{\mu=(1^{\beta_{1}}2^{\beta_{2}}\ldots)\preceq\lambda\\ \ell(\lambda)-\ell(\mu)\leq k+1}}\frac{(-1)^{k}k!(k+|\lambda|-|\mu|+1)(\beta_{|\lambda|-|\mu|+k}+1)}{(k-\ell(\lambda)+\ell(\mu)+1)!\prod_{i}(\alpha_{i}-\beta_{i})!}\bm_{(|\lambda|-|\mu|+k,0)(\mu,0)(0,1)^{|\lambda|+k}}
\end{align*}
\end{lem}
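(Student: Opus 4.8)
The plan is to compute the product directly with the multiplication rule of Lemma 2.4 and then re-expand each resulting monomial symmetric function in the basis of Lemma 2.5 by means of Lemma 2.9, finally matching coefficients with the aid of Lemma 2.8.

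First I would apply Lemma 2.4 with $(a,b)=(k,k)$ and $\Lambda=(\lambda,0)(0,1)^{|\lambda|}$. The parts of $\Lambda$ are $(i,0)$ (with multiplicity $\alpha_i$) and $(0,1)$ (with multiplicity $|\lambda|$), and since $k>0$ none of $(k,k)$, $(k+i,k)$, $(k,k+1)$ equals a part of $\Lambda$; hence every factor $c_{(\cdot)}+1$ in Lemma 2.4 is $1$ and
\begin{align*}
\bm_{(k,k)}\bm_{(\lambda,0)(0,1)^{|\lambda|}}=\bm_{(k,k)(\lambda,0)(0,1)^{|\lambda|}}+\sum_{i:\alpha_i>0}\bm_{(k+i,k)(\lambda\setminus i,0)(0,1)^{|\lambda|}}+\bm_{(k,k+1)(\lambda,0)(0,1)^{|\lambda|-1}}.
\end{align*}

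Next I would apply Lemma 2.9 to each of the three types of terms. The third term has $l=k+1>k$, but Lemma 2.9 only asks for $|\lambda|+k-l\geq0$, which holds because this term occurs only when $|\lambda|\geq1$; the degenerate case $\lambda=\emptyset$ reduces to (\ref{kk}). A direct check of parameters shows that all three expand into basis vectors of the single shape $\bm_{(|\lambda|-|\mu|+k,0)(\mu,0)(0,1)^{|\lambda|+k}}$ indexed by $\mu\preceq\lambda$, with the $f$-arguments all equal to $k+|\lambda|$ and each carrying the common prefactor $(\beta_{|\lambda|-|\mu|+k}+1)$. It therefore suffices to show that, for each $\mu$, the total bracketed coefficient $B(\mu)$ equals $\frac{(-1)^k k!\,(k+|\lambda|-|\mu|+1)}{(k-\ell(\lambda)+\ell(\mu)+1)!\,\prod_i(\alpha_i-\beta_i)!}$.

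Writing $B(\mu)=(-1)^k(\tilde B_1+\tilde B_2+\tilde B_3)$ for the three contributions, $\tilde B_1$ is exactly the left-hand side of Lemma 2.8. In $\tilde B_2$ I would first perform the inner sum over $i$ by (\ref{eq}) with $n_i=\alpha_i-\gamma_i$, which replaces $\sum_{i}\frac{1}{(\alpha_i-\gamma_i-1)!\prod_{j\neq i}(\alpha_j-\gamma_j)!}$ by $\frac{\ell(\lambda)-\ell(\nu)}{\prod_i(\alpha_i-\gamma_i)!}$. Setting $p=\ell(\lambda)-\ell(\nu)$ and $q=|\lambda|-|\nu|$, the $\nu$-summands of $\tilde B_2$ and $\tilde B_3$ then combine through $\frac{(p+q-1)!\,p}{q!}+\frac{(p+q-1)!\,q}{q!}=\frac{(p+q)!}{q!}$, so that $\tilde B_2+\tilde B_3=-\sum_{\mu\preceq\nu\preceq\lambda,\ \nu\neq\lambda}(-1)^{\ell(\nu)+\ell(\lambda)}f^\mu_\nu(k+|\lambda|)\frac{(p+q)!}{q!\prod_i(\alpha_i-\gamma_i)!}$. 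This is precisely $-\tilde B_1$ together with the single $\nu=\lambda$ term restored, whence $\tilde B_1+\tilde B_2+\tilde B_3=f^\mu_\lambda(k+|\lambda|)$. Evaluating $f^\mu_\lambda$ at $k+|\lambda|$ from its definition gives the desired $B(\mu)$, and the constraint $\ell(\lambda)-\ell(\mu)\leq k+1$ is automatic from the convention $1/x!=0$ for $x<0$.

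The main obstacle is this last telescoping: the naive expansion yields, for each $\mu$, a double sum over $\nu$ and over the three term-types, with no manifest cancellation. The two decisive simplifications are that the $i$-sum in the middle term contributes the factor $\ell(\lambda)-\ell(\nu)=p$ via (\ref{eq}), and that this $p$ then fuses with the $q=|\lambda|-|\nu|$ supplied by the third term so as to upgrade $(p+q-1)!$ to $(p+q)!$; only after these two steps do the middle and third contributions reassemble into the Lemma 2.8 sum and cancel $\tilde B_1$ up to the boundary term $\nu=\lambda$, which is exactly $f^\mu_\lambda(k+|\lambda|)$.
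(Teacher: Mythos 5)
Your proposal is correct and follows essentially the same route as the paper: expand $\bm_{(k,k)}\bm_{(\lambda,0)(0,1)^{|\lambda|}}$ into the three families of terms via Lemma 2.4, re-expand each in the basis via Lemma 2.9 (all with $f$-argument $k+|\lambda|$), and collapse the resulting $\nu$-sums via the multinomial identity (\ref{eq}) down to the boundary term $\nu=\lambda$, yielding $(-1)^{k}f^{\mu}_{\lambda}(k+|\lambda|)(\beta_{|\lambda|-|\mu|+k}+1)$. Your explicit telescoping (summing over $j$ to produce the factor $p=\ell(\lambda)-\ell(\nu)$ and fusing it with $q=|\lambda|-|\nu|$) is exactly the content of the paper's appeal to (\ref{eq}), and your treatment of the degenerate case $\lambda=\emptyset$ and of the vanishing constraint $\ell(\lambda)-\ell(\mu)\leq k+1$ is consistent with the paper.
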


\begin{proof}
By using Lemma 2.4 and Lemma 2.9, we calculate as:
\begin{align*}
\bm_{(k,k)}\bm_{(\lambda,0)(0,1)^{|\lambda|}}&=\bm_{(k,k)(\lambda,0)(0,1)^{|\lambda|}}+\bm_{(k,k+1)(\lambda,0)(0,1)^{|\lambda|-1}}+\sum_{j}\bm_{(k+j,k)(\lambda\setminus j,0)(0,1)^{|\lambda|}} \\
&\hspace{-5em}=\sum_{\mu=(1^{\beta_{1}}2^{\beta_{2}}\ldots)\preceq\lambda}(\beta_{|\lambda|-|\mu|+k}+1)\bm_{(|\lambda|-|\mu|+k,0)(\mu,0)(0,1)^{|\lambda|+k}} \\
&\hspace{-3em}\times\Biggl\{\sum_{\substack{\nu=(1^{\gamma_{1}}2^{\gamma_{2}}\ldots) \\ \mu\preceq\nu\preceq\lambda}}(-1)^{\ell(\nu)+\ell(\lambda)+k}f^{\mu}_{\nu}(k+|\lambda|)\frac{(\ell(\lambda)-\ell(\nu)+|\lambda|-|\nu|)!}{(|\lambda|-|\nu|)!\prod_{i}(\alpha_{i}-\gamma_{i})!} \\
&\hspace{-3em}-\sum_{\substack{\nu=(1^{\gamma_{1}}2^{\gamma_{2}}\ldots) \\ \mu\preceq\nu\preceq\lambda \\ |\nu|\leq|\lambda|-1}}(-1)^{\ell(\nu)+\ell(\lambda)+k}f^{\mu}_{\nu}(k+|\lambda|)\frac{(\ell(\lambda)-\ell(\nu)+|\lambda|-|\nu|-1)!}{(|\lambda|-|\nu|-1)!\prod_{i}(\alpha_{i}-\gamma_{i})!} \\
&\hspace{-3em}-\sum_{j}\sum_{\substack{\nu=(1^{\gamma_{1}}2^{\gamma_{2}}\ldots) \\ \mu\preceq\nu\preceq\lambda\setminus j}}(-1)^{\ell(\nu)+\ell(\lambda)+k}f^{\mu}_{\nu}(k+|\lambda|)\frac{(\ell(\lambda)-\ell(\nu)+|\lambda|-|\nu|-1)!}{(|\lambda|-|\nu|)!(\alpha_{j}-\gamma_{j}-1)!\prod_{i\neq j}(\alpha_{i}-\gamma_{i})!}\Biggr\} \\
&\hspace{-5em}=\sum_{\mu=(1^{\beta_{1}}2^{\beta_{2}}\ldots)\preceq\lambda}\frac{(-1)^{k}f^{\mu}_{\lambda}(k+|\lambda|)(\beta_{|\lambda|-|\mu|+k}+1)}{\prod_{i}(\alpha_{i}-\beta_{i})!}\bm_{(|\lambda|-|\mu|+k,0)(\mu,0)(0,1)^{|\lambda|+k}}
\end{align*}
Here, the last equality follows from (\ref{eq}).
\end{proof}

\begin{cor}
For $k\in\bb{Z}_{>0}$ and a partition $\lambda$, $\bm_{(k,k)}\bm_{(\lambda,0)(0,1)^{|\lambda|}}$ is contained in the linear span of $\{\bm_{(\nu,0)(0,1)^{|\nu|}}\mid\ell(\nu)+|\nu|\geq\ell(\lambda)+|\lambda|\}$.
\end{cor}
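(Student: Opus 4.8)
The plan is to deduce the statement directly from the explicit expansion furnished by Lemma 2.10, which I take as given. The entire content is a bookkeeping comparison between the summation condition appearing there and the length-plus-size inequality defining the claimed linear span, so I expect no serious analytic obstacle; the work is in correctly translating the bipartite-partition notation into the basis index of Lemma 2.5.

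First I would read off the shape of the terms in Lemma 2.10. Every nonzero summand is a scalar multiple of $\bm_{(|\lambda|-|\mu|+k,0)(\mu,0)(0,1)^{|\lambda|+k}}$, where $\mu=(1^{\beta_{1}}2^{\beta_{2}}\ldots)$ runs over partitions with $\mu\preceq\lambda$ and $\ell(\lambda)-\ell(\mu)\leq k+1$. My task is then to recognize each such summand as a basis vector $\bm_{(\nu,0)(0,1)^{|\nu|}}$ from Lemma 2.5 and to verify the required inequality on the index $\nu$.

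Next I would make the identification explicit. Let $\nu$ be the partition obtained from $\mu$ by adjoining a single part of size $|\lambda|-|\mu|+k$; since $k>0$ and $|\mu|\leq|\lambda|$, this new part is strictly positive, so $\nu$ is a genuine partition. Then the $x$-coordinates $(|\lambda|-|\mu|+k,0)(\mu,0)$ are exactly $(\nu,0)$, and
\begin{align*}
|\nu|=|\mu|+(|\lambda|-|\mu|+k)=|\lambda|+k,
\end{align*}
so the exponent $|\lambda|+k$ of $(0,1)$ in the summand matches $|\nu|$, and the term therefore equals $\bm_{(\nu,0)(0,1)^{|\nu|}}$, a member of the basis of Lemma 2.5.

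Finally I would compute the relevant invariant and compare. Since $\ell(\nu)=\ell(\mu)+1$ and $|\nu|=|\lambda|+k$, we have $\ell(\nu)+|\nu|=\ell(\mu)+1+|\lambda|+k$. The summation condition $\ell(\lambda)-\ell(\mu)\leq k+1$ rearranges to $\ell(\mu)+1+k\geq\ell(\lambda)$, and adding $|\lambda|$ to both sides yields $\ell(\nu)+|\nu|\geq\ell(\lambda)+|\lambda|$. Hence every basis vector occurring in the expansion of $\bm_{(k,k)}\bm_{(\lambda,0)(0,1)^{|\lambda|}}$ lies in the prescribed span, which is precisely the assertion. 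The only points demanding care are confirming that $\nu$ is well-defined as a partition and that the $(0,1)$-exponent in Lemma 2.10 indeed equals $|\nu|$, both of which follow at once from $k>0$ and $\mu\preceq\lambda$; in particular the summation constraint of Lemma 2.10 is \emph{equivalent} to the membership condition, so no terms are lost or spuriously included.
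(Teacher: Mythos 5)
Your argument is correct and is essentially identical to the paper's own proof: both identify each term of the Lemma 2.10 expansion with the basis vector indexed by $\nu=\mu\cup(|\lambda|-|\mu|+k)$ and deduce $\ell(\nu)+|\nu|=\ell(\mu)+1+|\lambda|+k\geq\ell(\lambda)+|\lambda|$ from the constraint $\ell(\lambda)-\ell(\mu)\leq k+1$. Your extra checks (positivity of the new part, matching of the $(0,1)$-exponent) are harmless elaborations of what the paper leaves implicit.
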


\begin{proof}
If $\bm_{(|\lambda|-|\mu|+k,0)(\mu,0)(0,1)^{|\lambda|+k}}$ appears in the formula for $\bm_{(k,k)}\bm_{(\lambda,0)(0,1)^{|\lambda|}}$ in Lemma 2.10, then we have $\ell(\lambda)-\ell(\mu)\leq k+1$ and hence
\begin{align*}
\ell(\mu\cup(|\lambda|-|\mu|+k))+|\mu\cup(|\lambda|-|\mu|+k)|&=\ell(\mu)+1+|\mu|+|\lambda|-|\mu|+k \\
&\geq \ell(\lambda)+|\lambda|
\end{align*}
This implies the corollary.
\end{proof}

\subsection{Proof of Theorem \ref{MainThm}}

\begin{lem}
The image of $\{m_{(\lambda,0)(0,1)^{|\lambda|}}\mid \ell(\lambda)+|\lambda|\leq n\}$ in $\bb{C}[(S^{n}\bb{C}^{2})^{\bb{T}}]$ forms a basis of $\bb{C}[(S^{n}\bb{C}^{2})^{\bb{T}}]$.
\end{lem}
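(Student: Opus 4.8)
The plan is to combine Lemma 2.5 (a basis of $\bar{S}$) with the vector-space description of the defining ideal of the fixed point scheme, and then pin down exactly which basis vectors die in the quotient. First I would record that ideal concretely. By Lemma 2.3, $\bb{C}[(S^{n}\bb{C}^{2})^{\bb{T}}]\cong\bar{S}/I$, where $I$ is the image in $\bar{S}$ of the kernel of $S\twoheadrightarrow\bb{C}[S^{n}\bb{C}^{2}]$; since that kernel is spanned, as a vector space, by $\{m_{\Lambda}\mid\ell(\Lambda)>n\}$, the ideal $I$ is spanned as a vector space by $\{\bm_{\Lambda}\mid\ell(\Lambda)>n\}$. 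In particular each $\bm_{(\lambda,0)(0,1)^{|\lambda|}}$ with $\ell(\lambda)+|\lambda|>n$ lies in $I$, because the bipartite partition $(\lambda,0)(0,1)^{|\lambda|}$ has length $\ell(\lambda)+|\lambda|>n$. Writing $V_{\le n}$ (resp. $V_{>n}$) for the span of the basis vectors $\bm_{(\lambda,0)(0,1)^{|\lambda|}}$ with $\ell(\lambda)+|\lambda|\le n$ (resp. $>n$), Lemma 2.5 gives $\bar{S}=V_{\le n}\oplus V_{>n}$ and the above shows $V_{>n}\subseteq I$. Hence the images of $\{\bm_{(\lambda,0)(0,1)^{|\lambda|}}\mid\ell(\lambda)+|\lambda|\le n\}$ already span $\bar{S}/I$, which settles the spanning half.

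For linear independence it suffices to prove $I=V_{>n}$, since then $V_{\le n}\cap I=V_{\le n}\cap V_{>n}=0$. As $V_{>n}\subseteq I$ is known, the whole content is the reverse inclusion $I\subseteq V_{>n}$; because $I$ is spanned by $\{\bm_{\Lambda}\mid\ell(\Lambda)>n\}$, this reduces to the following triangularity statement, which I would isolate as the key claim: for every bipartite partition $\Lambda$, the expansion of $\bm_{\Lambda}$ in the basis $\{\bm_{(\mu,0)(0,1)^{|\mu|}}\}$ involves only $\mu$ with $\ell(\mu)+|\mu|\ge\ell(\Lambda)$. Granting the claim, $\ell(\Lambda)>n$ forces every basis vector occurring in $\bm_{\Lambda}$ to satisfy $\ell(\mu)+|\mu|>n$, i.e. $\bm_{\Lambda}\in V_{>n}$, and we are done.

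To prove the key claim I would work one balanced bidegree at a time. Since $\bar{S}$ is generated by the $\bm_{(a,a)}$ it is supported in bidegrees $(a,a)$, so $\bm_{\Lambda}=0$ unless $|\Lambda|=(a,a)$, and then every $\mu$ occurring has $|\mu|=a$. Thus $\ell(\mu)+|\mu|=\ell(\mu)+a$ and, as any nonempty $\mu\vdash a$ has $\ell(\mu)\ge1$, we get for free that $\bm_{\Lambda}\in W_{\ge a+1}$, where $W_{\ge d}:=\mr{span}\{\bm_{(\mu,0)(0,1)^{|\mu|}}\mid\ell(\mu)+|\mu|\ge d\}$. This already proves the claim whenever $\ell(\Lambda)\le a+1$; in particular it covers all $\Lambda$ with only balanced parts, for if these are $(k_{1},k_{1}),\ldots,(k_{r},k_{r})$ then $a=\sum_{i}k_{i}\ge r=\ell(\Lambda)$. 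The remaining, genuinely hard case is a long $\Lambda$ carrying unbalanced parts. Here I would induct on $\ell(\Lambda)$: choosing an unbalanced part $(a_{0},b_{0})$ and using $\bm_{(a_{0},b_{0})}=0$ together with Lemma 2.4 expresses $\bm_{\Lambda}$ as a combination of terms $\bm_{(a_{0}+i,b_{0}+j)\,\Lambda'\setminus(i,j)}$ of length $\ell(\Lambda)-1$. Corollary 2.11, which shows each $W_{\ge d}$ is an ideal of $\bar{S}$, helps organize the bookkeeping.

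The main obstacle is precisely this last case. The induction as stated loses one unit: the length-$(\ell(\Lambda)-1)$ terms are only guaranteed to lie in $W_{\ge\ell(\Lambda)-1}$, whereas we need $\bm_{\Lambda}\in W_{\ge\ell(\Lambda)}$. The missing unit can only come from cancellation of the bottom (weight $\ell(\Lambda)-1$) components among the various terms produced by Lemma 2.4, and controlling that cancellation is exactly what the explicit reduction formulas of Lemmas 2.6, 2.9 and 2.10 (through the functions $f^{\mu}_{\nu}$ and the identity (\ref{eq})) are designed to supply. Accordingly I expect the real work to be a careful induction in which $\bm_{\Lambda}$ is expanded via those formulas and the coefficients of the minimal-weight basis vectors are shown to sum to zero; the balanced-bidegree reduction above keeps the number of cases one must push through finite and small.
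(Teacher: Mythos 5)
Your reduction of the lemma to the triangularity claim---that every $\bm_{\Lambda}$ expands in the basis of Lemma 2.5 using only $\bm_{(\mu,0)(0,1)^{|\mu|}}$ with $\ell(\mu)+|\mu|\geq\ell(\Lambda)$---is exactly the reduction the paper makes, and your disposal of the easy cases (balanced bidegree forces $\ell(\mu)+|\mu|\geq a+1$, which settles all $\Lambda$ with $\ell(\Lambda)\leq a+1$) is fine. But the hard case is not proved, and you say so yourself: inducting on $\ell(\Lambda)$ by writing $\Lambda=(a_{0},b_{0})\Lambda'$ and using $\bm_{(a_{0},b_{0})}\bm_{\Lambda'}=0$ produces only terms of length $\ell(\Lambda)-1$, to which the inductive hypothesis gives membership in $W_{\geq\ell(\Lambda)-1}$, one unit short of what is needed. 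Hoping that the bottom components cancel is not a proof, and the explicit formulas of Lemmas 2.6, 2.9, 2.10 are not set up to deliver that cancellation for an arbitrary $\Lambda$; so as written the argument establishes spanning but not linear independence (equivalently, not the inclusion $I\subseteq V_{>n}$).

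The paper closes this gap with a different relation that never loses the unit of length. Writing $\Lambda=(a,b)\Lambda'$ with $b>0$ and $(a,b)\neq(0,1)$, it expands the product $\bm_{(a,b-1)}\,\bm_{(0,1)\Lambda'}$ by Lemma 2.4: the second factor is obtained from $\Lambda$ by \emph{replacing} $(a,b)$ with $(0,1)$ rather than deleting it, so it has length $\ell(\Lambda)$, and consequently every term of the expansion---the concatenation term $\bm_{(a,b-1)(0,1)\Lambda'}$ of length $\ell(\Lambda)+1$, the term $\bm_{\Lambda}$ itself (appearing with nonzero coefficient, from joining $(a,b-1)$ to a $(0,1)$), and the remaining join terms $\bm_{(0,1)\Lambda''}$ of length $\ell(\Lambda)$---already has length at least $\ell(\Lambda)$. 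The induction is then run not on $\ell(\Lambda)$ but on $d(\Lambda)-e(\Lambda)$, where $e(\Lambda)$ is the number of $(0,1)$-parts: this quantity strictly drops for every term other than $\bm_{\Lambda}$, so the inductive hypothesis places each of them in $W_{\geq\ell(\Lambda)}$. When $b\neq a+1$ the left-hand side vanishes because $\bm_{(a,b-1)}=0$, and one solves for $\bm_{\Lambda}$; when $b=a+1$ the left-hand side is $\bm_{(a,a)}\bm_{(0,1)\Lambda'}$, which lies in $W_{\geq\ell(\Lambda)}$ by the inductive hypothesis applied to $\bm_{(0,1)\Lambda'}$ together with Corollary 2.11 (your observation that $W_{\geq d}$ is an ideal is used exactly here). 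You would need to supply this replacement trick, or an equivalent cancellation argument, for your proof to be complete.
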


\begin{proof}
By Lemma 2.3, the kernel of the natural surjection $\bar{S}\twoheadrightarrow\bb{C}[(S^{n}\bb{C}^{2})^{\bb{T}}]$ is spanned by $\{\bm_{\Lambda}\mid\ell(\Lambda)>n\}$. By Lemma 2.5, it suffices to show that each $\bm_{\Lambda}$ can be written as a linear combination of $\bm_{(\lambda,0)(0,1)^{|\lambda|}}$ with $\ell(\lambda)+|\lambda|\geq\ell(\Lambda)$. Set $\deg(\bm_{\Lambda})=2d(\Lambda)$ and set $e(\Lambda)$ to be the number of $(0,1)$ in $\Lambda$. We prove this claim by induction on $d(\Lambda)-e(\Lambda)$. If $e(\Lambda)=d(\Lambda)$, then there is nothing to prove. If $d(\Lambda)>e(\Lambda)$, then $\Lambda$ contains $(a,b)$ with $b>0$ and $(a,b)\neq(0,1)$. Let us write $\Lambda=(a,b)\Lambda'$. By Lemma 2.4, we have $$\bm_{(a,b-1)}\bm_{(0,1)\Lambda'}=c_{0}\bm_{(a,b-1)(0,1)\Lambda'}+c_{1}\bm_{\Lambda}+\sum_{\substack{\Lambda'' \\ e(\Lambda'')=e(\Lambda) \\ \ell(\Lambda'')=\ell(\Lambda)-1}}c_{\Lambda''}\bm_{(0,1)\Lambda''}$$ for some coefficients $c_{\ast}\in\bb{Z}$ with $c_{1}\neq0$. By the induction hypothesis, we have $\bm_{(a,b-1)(0,1)\Lambda'}$, $\bm_{(0,1)\Lambda''}\in\Bigl\langle\bm_{(\lambda,0)(0,1)^{|\lambda|}}\mid\ell(\lambda)+|\lambda|\geq\ell(\Lambda)\Bigr\rangle.$ If $b\neq a+1$, then this implies the claim since we have $\bm_{(a,b-1)}=0$. Let us assume $b=a+1$. Since we have $d((0,1)\Lambda')-e((0,1)\Lambda')=d(\Lambda)-e(\Lambda)-a-1$ and $\ell((0,1)\Lambda')=\ell(\Lambda)$, the induction hypothesis implies that $\bm_{(0,1)\Lambda'}\in\Bigl\langle\bm_{(\lambda,0)(0,1)^{|\lambda|}}\mid\ell(\lambda)+|\lambda|\geq\ell(\Lambda)\Bigr\rangle$. Then Corollary 2.11 implies that $\bm_{(a,a)}\bm_{(0,1)\Lambda'}\in\Bigl\langle\bm_{(\lambda,0)(0,1)^{|\lambda|}}\mid\ell(\lambda)+|\lambda|\geq\ell(\Lambda)\Bigr\rangle$. Therefore, we have $\bm_{\Lambda}\in\Bigl\langle\bm_{(\lambda,0)(0,1)^{|\lambda|}}\mid\ell(\lambda)+|\lambda|\geq\ell(\Lambda)\Bigr\rangle$ as required.

\end{proof}

\begin{proof}[Proof of Theorem \ref{MainThm}]
For a partition $\lambda=(1^{\alpha_{1}}2^{\alpha_{2}}\ldots)$ with $\ell(\lambda)+|\lambda|\leq n$, we denote by $\hlam=(1^{\hal_{1}}2^{\hal_{2}}\ldots)\vdash n$ the partition given by $\hal_{1}=n-\ell(\lambda)-|\lambda|$ and $\hal_{i}=\alpha_{i-1}$ for $i\geq2$. Let $\psi:\bb{C}[(S^{n}\bb{C}^{2})^{\bb{T}}]\rightarrow\gr^{F}\mca{Z}(\bb{C}[\mf{S}_{n}])$ be the linear map defined by $$\psi(\bm_{(\lambda,0)(0,1)^{|\lambda|}})=(-1)^{|\lambda|}\chi_{\hlam}.$$ By Lemma 2.12, $\psi$ is well-defined and an isomorphism of graded vector spaces. Since we have $\deg(\sigma)\leq 2(n-1)$ for any $\sigma\in\mf{S}_{n}$, $\{\bm_{(k,k)}\mid 1\leq k\leq n-1\}$ generates $\bb{C}[(S^{n}\bb{C}^{2})^{\bb{T}}]$ as a $\bb{C}$-algebra by Lemma 2.3. Hence it suffices to prove $$\psi(\bm_{(k,k)}\bm_{(\lambda,0)(0,1)^{|\lambda|}})=\psi(\bm_{(k,k)})\cup\psi(\bm_{(\lambda,0)(0,1)^{|\lambda|}})$$ for $1\leq k\leq n-1$. By Lemma 2.10, $\psi(\bm_{(k,k)}\bm_{(\lambda,0)(0,1)^{|\lambda|}})$ is given by $$\sum_{\substack{\mu=(1^{\beta_{1}}2^{\beta_{2}}\ldots) \preceq\lambda \\ \ell(\lambda)-\ell(\mu)\leq k+1}}\Biggl\{\frac{(-1)^{k}k!(k+|\lambda|-|\mu|+1)(\beta_{|\lambda|-|\mu|+k}+1)}{(k-\ell(\lambda)+\ell(\mu)+1)!\prod_{i}(\alpha_{i}-\beta_{i})!}\psi(\bm_{(|\lambda|-|\mu|+k,0)(\mu,0)(0,1)^{|\lambda|+k}})\Biggr\}.$$ Since $$\ell(\mu\cup(|\lambda|-|\mu|+k))+|\mu\cup(|\lambda|-|\mu|+k)|=|\lambda|+\ell(\mu)+k+1,$$ we have $\bm_{(|\lambda|-|\mu|+k,0)(\mu,0)(0,1)^{|\lambda|+k}}=0$ if $|\lambda|+\ell(\mu)+k+1>n$. Hence in the above sum, only $\mu$'s satisfying $|\lambda|+\ell(\mu)+k+1\leq n$ contribute.

For a partition $\mu=(1^{\beta_{1}}2^{\beta_{2}}\ldots)$ with $\ell(\lambda)-\ell(\mu)\leq k+1$, $\mu\preceq\lambda$, and $|\lambda|+\ell(\mu)+k+1\leq n$, we associate a partition $\xi(\mu)=(1^{\hgam_{1}}2^{\hgam_{2}}\ldots)$ with $\ell(\xi(\mu))=k+1$ by 
\begin{align*}
\hgam_{i}=\begin{cases}k+1-\ell(\lambda)+\ell(\mu)&\mbox{ if }i=1 \\ \alpha_{i-1}-\beta_{i-1}&\mbox{ if }i\geq 2.\end{cases}
\end{align*}
We have $\hgam_{1}\leq\hal_{1}$ and hence $\xi(\mu)\preceq\hlam$. This $\xi$ gives a bijection between the set of partitions $\mu$ with $\ell(\lambda)-\ell(\mu)\leq k+1$, $\mu\preceq\lambda$, and $|\lambda|+\ell(\mu)+k+1\leq n$ and the set of partitions $\hnu$ with $\ell(\hnu)=k+1$ and $\hnu\preceq\hlam$. We have $|\xi(\mu)|=k+1+|\lambda|-|\mu|$ and $\psi(\bm_{(|\lambda|-|\mu|+k,0)(\mu,0)(0,1)^{|\lambda|+k}})=(-1)^{|\lambda|+k}\chi_{\hlam_{\xi(\mu)}}$, where $\hlam_{\xi(\mu)}\vdash n$ is defined as in Lemma 2.2. By $\ell(\xi(\mu))=k+1>1$, we have $\hgam_{|\xi(\mu)|}=0$ and hence $\alpha_{|\xi(\mu)|-1}=\beta_{|\xi(\mu)|-1}$. Therefore, by Lemma 2.2, we have 
\begin{align*}
\psi(\bm_{(k,k)}\bm_{(\lambda,0)(0,1)^{|\lambda|}})&=\sum_{\substack{\mu=(1^{\beta_{1}}2^{\beta_{2}}\ldots)\preceq\lambda \\ \ell(\lambda)-\ell(\mu)\leq k+1 \\|\lambda|+\ell(\mu)+k+1\leq n}}\frac{(-1)^{|\lambda|}k!(k+|\lambda|-|\mu|+1)(\beta_{|\lambda|-|\mu|+k}+1)}{(k-\ell(\lambda)+\ell(\mu)+1)!\prod_{i}(\alpha_{i}-\beta_{i})!}\chi_{\hlam_{\hnu(\mu)}} \\
&=\sum_{\substack{\hnu=(1^{\hgam_{1}}2^{\hgam_{2}}\ldots)\preceq\hlam \\ \ell(\hnu)=k+1}}\frac{(-1)^{|\lambda|}k!|\hnu|(\hal_{|\hnu|}+1)}{\prod_{i}\hgam_{i}!}\chi_{\hlam_{\hnu}} \\
&=(-1)^{|\lambda|}\chi_{(k+1,1^{n-k-1})}\cup\chi_{\hlam} \\
&=\psi(\bm_{(k,k)})\cup\psi(\bm_{(\lambda,0)(0,1)^{|\lambda|}}).
\end{align*}
Here, in the last equality, we used (\ref{kk}). This completes the proof of Theorem \ref{MainThm}.

\end{proof}

\appendix

\section{Spaltenstein variety}

As in the introduction, let $G=\mr{GL}_{n}$ and $\mf{g}=\mf{gl}_{n}$. We fix a Cartan subalgebra $\mf{t}$ and a Borel subalgebra $\mf{b}\supset\mf{t}$. Let $\mathfrak{b}\subset\mf{p}, \mf{q}\subset\mf{g}$ be two standard parabolic subalgebras and $P$, $Q$ be the parabolic subgroups of $G$ with Lie algebras $\mf{p}$, $\mf{q}$. We denote a Levi and the nilpotent part of $\mf{p}$ (resp. $\mf{q}$) by $\mf{l}_{P}$ and $\mf{n}_{P}$ (resp. $\mf{l}_{Q}$ and $\mf{n}_{Q}$). Let $L_{Q}$ be the Levi subgroup of $G$ with its Lie algebra $\mf{l}_{Q}$. We take a regular nilpotent element $e_{P}$ of $\mf{l}_{P}$. We also take a regular nilpotent element $e_{Q}$ of $\mf{l}_{Q}$ and fix a $\mf{sl}_{2}$-triple $\{e_{Q}, h_{Q}, f_{Q}\}$. We denote the centralizer of $f_{Q}$ in $\mf{l}_{Q}$ (resp. in $\mf{g}$) by $Z_{\mf{l}_{Q}}(f_{Q})$ (resp. $Z_{\mf{g}}(f_{Q}))$. We set $\mca{N}_{P}=\mr{Ad}(G)\cdot\mf{n}_{P}$ and consider the scheme-theoretic intersection $\mca{N}_{P}\cap(e_{Q}+Z_{\mf{l}_{Q}}(f_{Q}))$ of $\mca{N}_{P}$ and $e_{Q}+Z_{\mf{l}_{Q}}(f_{Q})$ in $\mf{g}$. There is a $\bb{G}_{m}$-action on $\mca{N}_{P}\cap(e_{Q}+Z_{\mf{l}_{Q}}(f_{Q}))$ induced from the $\bb{G}_{m}$-action on $\mf{g}$ given by $t\cdot X=t^{-2}\mr{Ad}(t^{h_{Q}})X$ for $t\in\bb{G}_{m}$ and $X\in\mf{g}$. Let $\mca{X}^{Q}_{e_{P}}=\{gQ\in G/Q\mid\mr{Ad}(g)^{-1}e_{P}\in\mf{n}_{Q}\}$ be the Spaltenstein variety associated to $e_{P}$ and $Q$. 

\begin{thm}
There is a graded algebra isomorphism $$H^{\ast}(\mca{X}^{Q}_{e_{P}},\bb{C})\cong\bb{C}[\mca{N}_{P}\cap(e_{Q}+Z_{\mf{l}_{Q}}(f_{Q}))].$$ Here, the grading on $\bb{C}[\mca{N}_{P}\cap(e_{Q}+Z_{\mf{l}_{Q}}(f_{Q}))]$ comes from the $\bb{G}_{m}$-action above.
\end{thm}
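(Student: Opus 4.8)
The plan is to exhibit both sides as graded quotients of the relative invariant ring $\bb{C}[\mf{t}]^{W_{Q}}$, where $W_{Q}$ is the Weyl group of the Levi $L_{Q}$, and then to reduce the theorem to the equality of the two defining ideals. For the right-hand side I would invoke Kostant's section theorem for the reductive group $L_{Q}$: since $e_{Q}$ is regular nilpotent in $\mf{l}_{Q}$ and $\{e_{Q},h_{Q},f_{Q}\}$ is an $\mf{sl}_{2}$-triple, the slice $e_{Q}+Z_{\mf{l}_{Q}}(f_{Q})$ is a section of the adjoint quotient $\mf{l}_{Q}\to\mf{l}_{Q}/\!/L_{Q}\cong\mf{t}/W_{Q}$, so that $\bb{C}[e_{Q}+Z_{\mf{l}_{Q}}(f_{Q})]\cong\bb{C}[\mf{t}]^{W_{Q}}$. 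I would check that this identification is $\bb{G}_{m}$-equivariant for the grading in the theorem: on each Jordan block of $Z_{\mf{l}_{Q}}(f_{Q})$ the $\mr{ad}(h_{Q})$-weights are $0,-2,-4,\ldots$, and after the shift $t^{-2}$ in $t\cdot X=t^{-2}\mr{Ad}(t^{h_{Q}})X$ the dual coordinates acquire weights $2,4,\ldots$, exactly the cohomological degrees of the basic invariants. Hence $\bb{C}[\mca{N}_{P}\cap(e_{Q}+Z_{\mf{l}_{Q}}(f_{Q}))]\cong\bb{C}[\mf{t}]^{W_{Q}}/J$, where $J$ is the restriction to the slice of the ideal $I(\mca{N}_{P})\subset\bb{C}[\mf{g}]$. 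As every positive-degree element of $\bb{C}[\mf{g}]^{G}=\bb{C}[\mf{t}]^{W}$ vanishes on $\mca{N}_{P}\subset\mca{N}$, the ideal $J$ contains $\bb{C}[\mf{t}]^{W_{Q}}\cdot\bb{C}[\mf{t}]^{W}_{+}$, so this quotient factors through the Borel presentation $H^{\ast}(G/Q,\bb{C})\cong\bb{C}[\mf{t}]^{W_{Q}}/(\bb{C}[\mf{t}]^{W}_{+})$.

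For the left-hand side I would use that the Spaltenstein variety $\mca{X}^{Q}_{e_{P}}$ of type A admits an affine paving; thus $H^{\ast}(\mca{X}^{Q}_{e_{P}},\bb{C})$ is concentrated in even degrees and the restriction $H^{\ast}(G/Q,\bb{C})\twoheadrightarrow H^{\ast}(\mca{X}^{Q}_{e_{P}},\bb{C})$ along the closed embedding $\mca{X}^{Q}_{e_{P}}\hookrightarrow G/Q$ is surjective. Composing with the Borel presentation yields a graded surjection $\bb{C}[\mf{t}]^{W_{Q}}\twoheadrightarrow H^{\ast}(\mca{X}^{Q}_{e_{P}},\bb{C})$ with some kernel $I$, a De Concini--Procesi-type presentation for the Spaltenstein variety. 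Both sides are now graded quotients of $\bb{C}[\mf{t}]^{W_{Q}}$ (indeed of $H^{\ast}(G/Q)$), and the theorem becomes the assertion $I=J$.

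To prove $I=J$ I would exhibit explicit generators. Under Kostant's isomorphism the determinantal (rank) equations cutting out $\mca{N}_{P}$ restrict to symmetric-function expressions in $\bb{C}[\mf{t}]^{W_{Q}}$ of "partial elementary symmetric" type, generalizing the generators of Tanisaki's ideal; I would show these lie in $I$ by matching them with the relations holding in $H^{\ast}(\mca{X}^{Q}_{e_{P}})$, which encode $e_{P}F_{i}\subseteq F_{i-1}$ for the tautological flag. This produces a graded surjection $\bb{C}[\mf{t}]^{W_{Q}}/J\twoheadrightarrow\bb{C}[\mf{t}]^{W_{Q}}/I$, which I would then promote to an isomorphism by a graded-dimension count: the Poincar\'e polynomial of $H^{\ast}(\mca{X}^{Q}_{e_{P}})$ is a Kostka--Foulkes polynomial accessible through Springer theory, while the Hilbert series of the Artinian ring $\bb{C}[\mca{N}_{P}\cap(e_{Q}+Z_{\mf{l}_{Q}}(f_{Q}))]$---set-theoretically a fat point supported at $e_{Q}$---can be obtained by degenerating the slice via the $\bb{G}_{m}$-action and reducing to Tanisaki's computation of $\bb{C}[\mca{N}_{P}\cap\mf{t}]$ in the case $Q=B$; the two series should then coincide by the relevant symmetric-function identity.

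The step I expect to be the main obstacle is exactly this ideal comparison. Because $\mca{N}_{P}\cap(e_{Q}+Z_{\mf{l}_{Q}}(f_{Q}))$ is highly non-reduced, one must control its full scheme structure rather than its support, both when restricting the defining equations of $\mca{N}_{P}$ to the slice and when computing the length; and the dimension count requires a clean match between this length and the total Betti number of $\mca{X}^{Q}_{e_{P}}$. In essence this is the generalization of Tanisaki's theorem from the Cartan slice $\mf{t}$ (the case $Q=B$) to an arbitrary slice $e_{Q}+Z_{\mf{l}_{Q}}(f_{Q})$ regular inside the Levi $L_{Q}$, and I anticipate it to be the technical heart of the argument.
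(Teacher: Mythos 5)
Your global strategy --- identify $\bb{C}[e_{Q}+Z_{\mf{l}_{Q}}(f_{Q})]$ with $\bb{C}[\mf{t}]^{W_{Q}}$ via the Kostant section for $L_{Q}$, present both sides as graded quotients of this ring, and reduce the theorem to an equality of two ideals --- is exactly the paper's, and your anticipation that the determinantal equations of $\mca{N}_{P}$ restrict to ``partial elementary symmetric'' expressions on the slice is also on target. The divergence, and the gap, is in how you close the ideal comparison. The paper never performs a dimension count: it quotes the Brundan--Ostrik presentation $H^{\ast}(\mca{X}^{Q}_{e_{P}},\bb{C})\cong R_{\mu}/I^{\lambda}_{\mu}$ with an explicit generating set for the cohomological ideal, quotes Weyman's generators for the defining ideal of $\mca{N}_{P}$, and then proves \emph{both} containments between the two ideals by factoring each $s$-minor of $tI-Z(x)$ into minors of the blocks $tI-Z_{i}(x^{(i)})$: every block used in size $<\mu_{i}$ can be made to contribute a unit minor, and the blocks used in full size contribute their characteristic polynomials, whose coefficients are precisely the generators $\tilde{e}_{r}(\mu;i)$. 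Your sketch only establishes one containment ($J\subseteq I$, giving a surjection $\bb{C}[\mf{t}]^{W_{Q}}/J\twoheadrightarrow H^{\ast}(\mca{X}^{Q}_{e_{P}},\bb{C})$) and then leans on a Hilbert-series match.

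That Hilbert-series step is where the argument breaks. You propose to compute the length of $\bb{C}[\mca{N}_{P}\cap(e_{Q}+Z_{\mf{l}_{Q}}(f_{Q}))]$ by ``degenerating the slice via the $\bb{G}_{m}$-action and reducing to Tanisaki's computation of $\bb{C}[\mca{N}_{P}\cap\mf{t}]$.'' The $\bb{G}_{m}$-action in question preserves the slice $e_{Q}+Z_{\mf{l}_{Q}}(f_{Q})$ --- it is what defines the grading --- so it does not degenerate the slice to the Cartan; and the two lengths genuinely differ: $\dim\bb{C}[\mca{N}_{P}\cap\mf{t}]=\dim H^{\ast}(\mca{B}_{e_{P}},\bb{C})$, whereas the theorem requires $\dim H^{\ast}(\mca{X}^{Q}_{e_{P}},\bb{C})=\dim H^{\ast}(\mca{B}_{e_{P}},\bb{C})^{W_{Q}}$, which is strictly smaller once $Q\neq B$. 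So there is no reduction to the $Q=B$ case, and an independent computation of the length of this non-reduced zero-dimensional scheme is essentially as hard as the theorem itself. To repair the argument you either need the reverse containment $I\subseteq J$ --- which is what the paper's ``conversely'' step with the unit minors delivers, once one has explicit generators of $I$ from Brundan--Ostrik rather than just the abstract existence of a presentation --- or a genuinely independent length computation, and your sketch supplies neither.
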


If $Q=B$ and $e_{Q}=0$, then $\mca{X}^{Q}_{e_{P}}$ coincides with the Springer fiber $\mca{B}_{e_{P}}$ and the above description of its cohomology ring reduces to Theorem \ref{DPT}.

\begin{rem}
Let $Z(L_{Q})$ be the center of $L_{Q}$. Then $Z(L_{Q})$ acts on $\mca{N}_{P}\cap(e_{Q}+Z_{\mf{g}}(f_{Q}))$ by the adjoint action. One can easily see that the scheme-theoretic intersection $\mca{N}_{P}\cap(e_{Q}+Z_{\mf{l}_{Q}}(f_{Q}))$ is isomorphic to the $Z(L_{Q})$-fixed point scheme $(\mca{N}_{P}\cap(e_{Q}+Z_{\mf{g}}(f_{Q})))^{Z(L_{Q})}$ of $\mca{N}_{P}\cap(e_{Q}+Z_{\mf{g}}(f_{Q}))$. 
\end{rem}

For the proof of Theorem A.1, we use the presentation of the cohomology ring $H^{\ast}(\mca{X}^{Q}_{e_{P}},\bb{C})$ by Brundan-Ostrik \cite{BO} and the defining equations of $\mca{N}_{P}$ in $\mf{g}$ which was conjectured by Tanisaki \cite{T} and proved by Weyman \cite{W}. We first recall some results from \cite{BO}.

Let $\lambda=(\lambda_{1}\geq\ldots\geq\lambda_{n}\geq 0)$ be the transpose of the partition corresponding to $P$ and $\mu=(\mu_{1},\ldots,\mu_{n})$, $\mu_{i}\geq0$, the composition of $n$ corresponding to $Q$. Then $e_{P}$ is the nilpotent matrix whose Jordan block is of type $\lambda^{T}$, and $\mca{N}_{P}$ is the closure of the nilpotent orbit whose Jordan block is of type $\lambda$. 

Let $R:=\bb{C}[x_{1},\ldots,x_{n}]$ be the polynomial ring in $n$-variables. We define its grading by $\deg(x_{i})=2$. Let $\mf{S}_{\mu}:=\mf{S}_{\mu_{1}}\times\cdots\times\mf{S}_{\mu_{n}}$ be the parabolic subgroup of $n$-th symmetric group $\mf{S}_{n}$. For $1\leq i\leq l$ and $r\in\bb{Z}_{\geq 0}$, we denote by $e_{r}(\mu;i)$ the $r$-th elementary symmeric polynomial in the variables $\{x_{k}\mid \mu_{1}+\cdots+\mu_{i-1}+1\leq k\leq \mu_{1}+\cdots+\mu_{i}\}$. We also set $e_{0}(\mu;i)=1$. Then the algebra of $\mf{S}_{\mu}$-invariant polynomials $R_{\mu}:=R^{\mf{S}_{\mu}}$ is freely generated by $\{e_{r}(\mu;i)\mid 1\leq i\leq n, 1\leq r\leq\mu_{i}\}$. 

For $m\geq1$, $1\leq i_{1}<\cdots<i_{m}\leq n$ and $r\geq 0$, let $$e_{r}(\mu;i_{1},\ldots,i_{m}):=\sum_{r_{1}+\cdots+r_{m}=r}e_{r_{1}}(\mu;i_{1})\cdots e_{r_{m}}(\mu;i_{m}).$$ Let $I^{\lambda}_{\mu}$ be the ideal of $R_{\mu}$ generated by 
$$\left\{e_{r}(\mu;i_{1},\ldots,i_{m})\relmiddle|\begin{array}{l}m\geq 1,1\leq i_{1}<\cdots<i_{m}\leq n,\\ r>\mu_{i_{1}}+\cdots+\mu_{i_{m}}-\lambda_{a+1}-\cdots-\lambda_{n}\\ \textrm{where } a:=\#\{i\mid\mu_{i}>0,i\neq i_{1},\ldots,i_{m}\}\end{array}\right\}.$$ 

\begin{thm}[\cite{BO}]
There is an isomorphism of graded algebras $$H^{\ast}(\mca{X}^{Q}_{e_{P}},\bb{C})\cong R_{\mu}/I^{\lambda}_{\mu}.$$
\end{thm}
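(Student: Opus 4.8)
The plan is to realize $R_{\mu}/I^{\lambda}_{\mu}$ as the image of the restriction map from the cohomology of the ambient partial flag variety $G/Q$ and then force an isomorphism by a Hilbert-series comparison. First I would invoke the Borel presentation: the tautological filtration $0=V_{0}\subset V_{1}\subset\cdots\subset V_{n}=\mca{O}^{n}$ on $G/Q$ (with $\mr{rk}(V_{i}/V_{i-1})=\mu_{i}$) identifies the generators $e_{r}(\mu;i)$ with the Chern classes $c_{r}(V_{i}/V_{i-1})$, and the Leray--Hirsch theorem gives $H^{\ast}(G/Q,\bb{C})\cong R_{\mu}/(R_{\mu}\cdot R^{\mf{S}_{n}}_{+})$, where $R^{\mf{S}_{n}}_{+}$ denotes the positive-degree $\mf{S}_{n}$-invariants. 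The closed embedding $\iota:\mca{X}^{Q}_{e_{P}}\hookrightarrow G/Q$ induces $\iota^{\ast}:H^{\ast}(G/Q,\bb{C})\to H^{\ast}(\mca{X}^{Q}_{e_{P}},\bb{C})$, and the first step is to show $\iota^{\ast}$ is surjective, so that we obtain a graded surjection $\pi:R_{\mu}\twoheadrightarrow H^{\ast}(\mca{X}^{Q}_{e_{P}},\bb{C})$. I would get surjectivity from the fact that Spaltenstein varieties of type $A$ admit affine pavings (Spaltenstein, Shimomura): a generic one-parameter subgroup of $T$ gives $\mca{X}^{Q}_{e_{P}}$ an affine cell decomposition, so its cohomology is pure and spanned by the classes of cell closures, which are restrictions of Schubert-type classes from $G/Q$.

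Next I would prove $I^{\lambda}_{\mu}\subseteq\Ker\pi$. Geometrically, the defining condition $\mr{Ad}(g)^{-1}e_{P}\in\mf{n}_{Q}$ means that $e_{P}$ induces, globally over $\mca{X}^{Q}_{e_{P}}$, a nilpotent bundle endomorphism shifting the tautological flag, $e_{P}:V_{i}\to V_{i-1}$. Since $e_{P}$ has fixed Jordan type $\lambda$, the ranks of powers of $e_{P}$ on the sub-bundles indexed by $\{i_{1}<\cdots<i_{m}\}$ and on their quotients are bounded, and these rank bounds translate, via the standard degeneracy-locus (Giambelli-type) vanishing, into the vanishing of precisely the combinations $e_{r}(\mu;i_{1},\ldots,i_{m})$ with $r>\mu_{i_{1}}+\cdots+\mu_{i_{m}}-\lambda_{a+1}-\cdots-\lambda_{n}$. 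Concretely, the equations cutting out $\mca{N}_{P}$ inside $\mf{g}$ conjectured by Tanisaki and proved by Weyman are exactly the rank equations of this kind, and restricting them to the nilpotent endomorphism on the tautological flag over $\mca{X}^{Q}_{e_{P}}$ turns them into the stated Chern-class identities. This yields an induced graded surjection $\bar{\pi}:R_{\mu}/I^{\lambda}_{\mu}\twoheadrightarrow H^{\ast}(\mca{X}^{Q}_{e_{P}},\bb{C})$.

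To finish I would compare Hilbert series degree by degree. On the geometric side, the Poincar\'e polynomial of $\mca{X}^{Q}_{e_{P}}$ is computed by Spaltenstein and Hotta--Springer as a Kostka--Foulkes (Green) polynomial attached to $(\lambda,\mu)$. On the algebraic side, one computes the Hilbert series of $R_{\mu}/I^{\lambda}_{\mu}$, either by producing an explicit monomial basis through a Gr\"obner degeneration of the generators $e_{r}(\mu;i_{1},\ldots,i_{m})$, or by realizing $R_{\mu}/I^{\lambda}_{\mu}$ as the $\mf{S}_{\mu}$-invariants of the DeConcini--Procesi--Tanisaki coinvariant algebra $R/I^{\lambda}$ and reading off its graded $\mf{S}_{n}$-character. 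Matching the two series forces the graded surjection $\bar{\pi}$ to be an isomorphism in every degree.

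The main obstacle is the interlock between the ideal containment and the dimension count: translating Weyman's rank equations cleanly into Chern-class relations on the tautological subquotients of the partial flag requires care, and the decisive point is the exact combinatorial identification of the Hilbert series of $R_{\mu}/I^{\lambda}_{\mu}$ with the Kostka--Foulkes Poincar\'e polynomial of $\mca{X}^{Q}_{e_{P}}$ --- this matching, rather than the surjectivity, is the crux. A variant that bypasses the explicit Gr\"obner basis is to prove directly that restriction induces an algebra isomorphism $H^{\ast}(\mca{B}_{e_{P}},\bb{C})^{\mf{S}_{\mu}}\cong H^{\ast}(\mca{X}^{Q}_{e_{P}},\bb{C})$ for the Springer action, reducing Theorem A.2 to the $Q=B$ case (Theorem \ref{DPT}) together with the purely algebraic identity $(R/I^{\lambda})^{\mf{S}_{\mu}}=R_{\mu}/I^{\lambda}_{\mu}$.
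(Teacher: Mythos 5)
A point of orientation first: the paper does not prove this statement at all --- Theorem A.3 is imported verbatim from Brundan--Ostrik \cite{BO} and then used as an input to the proof of Theorem A.1. So there is no in-paper argument to compare against, and your proposal has to be measured against the proof in \cite{BO}. Measured that way, your closing ``variant'' is essentially the route Brundan and Ostrik actually take: by Borho--MacPherson one has $H^{\ast}(\mca{X}^{Q}_{e_{P}},\bb{C})\cong H^{\ast}(\mca{B}_{e_{P}},\bb{C})^{\mf{S}_{\mu}}$, and the theorem reduces to the $Q=B$ case (Theorem \ref{DPT}) together with the identity $(R/I^{\lambda})^{\mf{S}_{\mu}}\cong R_{\mu}/I^{\lambda}_{\mu}$. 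But be aware that this last identity is not a ``purely algebraic'' lemma you can wave at: it is the main algebraic content of \cite{BO} (one must show that the $\mf{S}_{\mu}$-invariants of the Tanisaki ideal are generated by the $e_{r}(\mu;i_{1},\ldots,i_{m})$ with the stated degree bounds, and the dimension count through Kostka--Foulkes/Green polynomials lives exactly there). Your main line through $G/Q$ defers the same work to the unexecuted Hilbert-series match, which you correctly flag as the crux; so as a reduction to the literature the sketch is sound, but as a self-contained proof neither version discharges the decisive step.

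There is also a genuine gap in your surjectivity argument. An affine paving of $\mca{X}^{Q}_{e_{P}}$ (which is in general a \emph{singular} projective variety) gives vanishing of odd cohomology and a basis of homology by the fundamental classes of cell closures; it does \emph{not} give that these classes are ``restrictions of Schubert-type classes,'' a phrase that does not even parse on the homology side. Dualizing, surjectivity of $\iota^{\ast}$ is equivalent to injectivity of the pushforward $\iota_{\ast}:H_{\ast}(\mca{X}^{Q}_{e_{P}},\bb{C})\rightarrow H_{\ast}(G/Q,\bb{C})$, and the expansion of a cell closure of the paving in the Schubert basis need not be unitriangular, since each cell is in general a proper subvariety of the Schubert cell containing it, so its closure has strictly smaller dimension than the corresponding Schubert variety. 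Surjectivity is true in type $A$, but it requires a real argument (it falls out of the Borho--MacPherson description, which is how \cite{BO} sidesteps the issue; it does not follow from the paving alone). Similarly, your containment $I^{\lambda}_{\mu}\subseteq\Ker\pi$ via degeneracy loci is plausible but only asserted: translating the rank bounds of the fixed endomorphism $e_{P}$ on the tautological subquotients into precisely the generators with $r>\mu_{i_{1}}+\cdots+\mu_{i_{m}}-\lambda_{a+1}-\cdots-\lambda_{n}$, where $a=\#\{i\mid\mu_{i}>0,\ i\neq i_{1},\ldots,i_{m}\}$, is delicate bookkeeping (note that this is also the pattern of the computation the paper itself performs on the \emph{algebraic} side in its proof of Theorem A.1), and it is not carried out in your proposal.
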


Next we recall the defining equations of nilpotent orbit closures of $\mf{g}$. We have $\bb{C}[\mf{g}]=\bb{C}[x_{ij}]$, where $x_{ij}$ $(1\leq i,j\leq n)$ is the $(i,j)$-th coordinate of matrices. Let $\{g^{\lambda}_{u}\}_{u}$ be the set of coefficients of $t^{k}$ in $s$-minors of $(tI-(x_{ij}))$ with $s=1,\ldots,n$ and $k<\lambda_{n-s+1}+\lambda_{n-s+2}+\cdots+\lambda_{n}$.  

\begin{thm}[\cite{W}]
The defining ideal of $\mca{N}_{P}$ in $\mf{g}$ is generated by $\{g^{\lambda}_{u}\}_{u}$.
\end{thm}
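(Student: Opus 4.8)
Write $J_{\lambda}\subseteq\bb{C}[\mf{g}]$ for the homogeneous ideal generated by the $g^{\lambda}_{u}$, and $I(\mca{N}_{P})$ for the (radical) ideal of the orbit closure $\mca{N}_{P}=\overline{\mca{O}_{\lambda}}$, where $\mca{O}_{\lambda}$ is the nilpotent orbit of Jordan type $\lambda$. Since each $s\times s$ minor of $tI-(x_{ij})$ transforms compatibly under simultaneous conjugation, the span of $\{g^{\lambda}_{u}\}$ is $\mr{Ad}(G)$-stable, so $J_{\lambda}$ is a $G$-stable homogeneous ideal; this lets me use $G$-representation theory throughout. The whole statement splits into the containment $J_{\lambda}\subseteq I(\mca{N}_{P})$ together with the radicality of $J_{\lambda}$: granting both, $J_{\lambda}=\sqrt{J_{\lambda}}=I(\mca{N}_{P})$.

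The first step is to establish $\sqrt{J_{\lambda}}=I(\mca{N}_{P})$, i.e. the set-theoretic equality $V(J_{\lambda})=\mca{N}_{P}$. The generators encode determinantal divisors: for $X$ nilpotent of Jordan type $\nu$ (padded with zeros to length $n$), the $s$-th determinantal divisor, the gcd of the $s\times s$ minors of $tI-X$, equals $t^{\nu_{n-s+1}+\cdots+\nu_{n}}$. If $\nu$ lies below $\lambda$ in the dominance order, then $\nu_{n-s+1}+\cdots+\nu_{n}\geq\lambda_{n-s+1}+\cdots+\lambda_{n}$ for every $s$ (partial sums of the smallest parts), so every $s$-minor of $tI-X$ is divisible by $t^{\lambda_{n-s+1}+\cdots+\lambda_{n}}$ and all the $g^{\lambda}_{u}$ vanish; this gives $J_{\lambda}\subseteq I(\mca{N}_{P})$. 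Conversely, if all $g^{\lambda}_{u}$ vanish at $X$ then each $s$-minor of $tI-X$ is divisible by $t^{\lambda_{n-s+1}+\cdots+\lambda_{n}}$; taking $s=n$ forces the characteristic polynomial to be $t^{n}$, so $X$ is nilpotent, and the remaining divisibilities translate into the rank inequalities characterizing membership in $\mca{N}_{P}$. Hence $V(J_{\lambda})=\mca{N}_{P}$, and the entire content of the theorem is the radicality of $J_{\lambda}$.

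For radicality my plan is to run the geometric technique of Kempf, Lascoux and Weyman on the standard resolution of $\mca{N}_{P}$. Let $\mr{Fl}$ be the partial flag variety with consecutive dimension jumps the parts of the transpose $\lambda^{T}$, and set $Z=\{(X,F_{\bullet})\mid XF_{i}\subseteq F_{i-1}\}$, a homogeneous vector subbundle of $\mf{g}\times\mr{Fl}$; the projection $\pi\colon Z\to\mf{g}$ is a resolution of singularities of $\mca{N}_{P}$. I would first show $R^{i}\pi_{\ast}\mca{O}_{Z}=0$ for $i>0$ and $\pi_{\ast}\mca{O}_{Z}=\mca{O}_{\mca{N}_{P}}$, so that $\mca{N}_{P}$ is normal, Cohen–Macaulay with rational singularities and $\bb{C}[\mca{N}_{P}]=H^{0}(Z,\mca{O}_{Z})$. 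Resolving $\mca{O}_{Z}$ by the Koszul complex of the normal bundle and pushing forward to $\mf{g}$, the terms of the minimal free resolution of $\bb{C}[\mca{N}_{P}]$ over $\bb{C}[\mf{g}]$ are computed as $\bigoplus_{i}H^{i}(\mr{Fl},\bigwedge^{\bullet}\xi^{\ast})$ by Bott's theorem, where $\xi$ is the complementary bundle to $Z$ in $\mf{g}\times\mr{Fl}$. The first term of this resolution carries the generators of $I(\mca{N}_{P})$ as an explicit sum of $\mr{GL}_{n}$-irreducibles; the concluding step is to match this $G$-submodule with the span of the $g^{\lambda}_{u}$, expanding the minors of $tI-X$ into Schur functors via the Cauchy formula and checking that the cut-off $k<\lambda_{n-s+1}+\cdots+\lambda_{n}$ selects exactly the irreducibles produced by Bott.

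The main obstacle is precisely this matching: proving that Bott's theorem produces no generators beyond those realized by the $g^{\lambda}_{u}$ and that the $t$-degree cut-off is sharp, so that the abstractly computed generator module coincides with $J_{\lambda}$ rather than merely defining the same variety. The vanishing $R^{i}\pi_{\ast}\mca{O}_{Z}=0$ and the sharpness of the Bott computation are where all the representation-theoretic bookkeeping concentrates. As an alternative route to radicality one could instead compute the Hilbert series of $\bb{C}[\mf{g}]/J_{\lambda}$ directly: restriction to the Cartan $\mf{t}$ carries $J_{\lambda}$ to the Tanisaki ideal, whose quotient $\bb{C}[\mca{N}_{P}\cap\mf{t}]$ has the Hilbert series computed by DeConcini–Procesi (\cite{DP}); if one shows $\bb{C}[\mf{g}]/J_{\lambda}$ is free over the invariants $\bb{C}[\mf{g}]^{G}$, its Hilbert series is forced to equal that of $\bb{C}[\mca{N}_{P}]$, and then $J_{\lambda}=I(\mca{N}_{P})$ follows from the inclusion $J_{\lambda}\subseteq I(\mca{N}_{P})$ already established. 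In this second route the obstacle shifts to proving flatness of $\bb{C}[\mf{g}]/J_{\lambda}$ over $\bb{C}[\mf{g}]^{G}$.
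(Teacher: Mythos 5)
You should first note the context: this statement is Theorem A.4, which the paper does not prove at all --- it is imported verbatim from Weyman \cite{W} (Tanisaki's conjecture), so there is no internal proof to compare against, and your proposal must stand on its own as a proof of Weyman's theorem. It does not. Your first step, the set-theoretic identity $V(J_{\lambda})=\mca{N}_{P}$ via determinantal divisors and the dominance order, is correct and classical (essentially Gerstenhaber/Tanisaki), and it correctly reduces the theorem to showing that $J_{\lambda}$ is radical, equivalently that the $G$-stable module spanned by the $g^{\lambda}_{u}$ generates all of $I(\mca{N}_{P})$. But at exactly this point the proposal turns into a programme rather than a proof: the Kempf--Lascoux--Weyman technique you outline is indeed the method of the cited source, yet the steps you defer are precisely the content of the theorem. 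The vanishing $R^{i}\pi_{\ast}\mca{O}_{Z}=0$ and $\pi_{\ast}\mca{O}_{Z}=\mca{O}_{\mca{N}_{P}}$ need external inputs (normality is Kraft--Procesi, the higher vanishing is Grauert--Riemenschneider type), the bundle $\bigwedge^{\bullet}\xi^{\ast}$ on the partial flag variety is not a sum of line bundles, so Bott's theorem applies only after filtering through the Cauchy formula --- this is where all of Weyman's actual work lies --- and the final ``matching/sharpness'' step, which you yourself name as the main obstacle, is simply asserted as remaining to be done. Naming the hard step is not proving it; nothing in the proposal advances the radicality/generation claim beyond restating it.

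Your fallback route is moreover incoherent as stated. For $s=n$ the cut-off $k<\lambda_{1}+\cdots+\lambda_{n}=n$ places every non-leading coefficient of the characteristic polynomial of $(x_{ij})$ into $J_{\lambda}$; these coefficients generate the augmentation ideal of $\bb{C}[\mf{g}]^{G}$, so $\bb{C}[\mf{g}]/J_{\lambda}$ is annihilated by all positive-degree invariants and can never be free (or faithfully flat) over $\bb{C}[\mf{g}]^{G}$ unless it is zero. What you presumably intend is the Kostant-style deformed statement: replace $J_{\lambda}$ by the ideal cutting out the family over the adjoint quotient $\mf{t}/W$ whose zero fibre is $\mca{N}_{P}$ (coefficients of minors of $tI-(x_{ij})$ minus the corresponding invariant functions) and prove flatness of that family, recovering the Hilbert series comparison with \cite{DP} on the special fibre. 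That corrected statement is again a genuinely nontrivial theorem in the Kraft--Procesi/DeConcini--Procesi circle of ideas, not a shortcut. So in both branches you have identified the right landscape and the right reduction, but neither the radicality nor the sharp generation is established; the theorem is not proved.
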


\begin{proof}[Proof of Theorem A.1]
First we prepare some notation. We define $\mu_{i}\times\mu_{i}$-matrices $E_{i}$ and $F_{i}$ as follows: 
$$E_{i}=\left(
\begin{array}{ccccc}
0&\cdots&\cdots&\cdots&0\\
1&\ddots&&&\vdots\\
0&\ddots&\ddots&&\vdots\\
\vdots&\ddots&\ddots&\ddots&\vdots\\
0&\cdots&0&1&0\\
\end{array}
\right),$$
$$F_{i}=\left(
\begin{array}{ccccc}
0&\hspace{-5pt}\mu_{i}-1&0&\cdots&0\\
0&\hspace{-5pt}0&\hspace{-10pt}\text{\small{$2(\mu_{i}-2)$}}&\ddots&\vdots\\
\vdots&\hspace{-5pt}\ddots&\hspace{-10pt}\ddots&\hspace{-10pt}\ddots&\vdots\\
\vdots&&\hspace{-10pt}\ddots&\hspace{-10pt}\ddots&0\\
\vdots&&&\hspace{-10pt}0&\mu_{i}-1\\
0&\hspace{-5pt}\cdots&\hspace{-10pt}\cdots&\hspace{-10pt}0&0\\
\end{array}
\right).
$$
We set $Z_{i}(x^{(i)})=Z_{i}(x^{(i)}_{1},\ldots,x^{(i)}_{\mu_{i}}):=E_{i}+x^{(i)}_{1}I+x^{(i)}_{2}F_{i}+x^{(i)}_{3}F^{2}_{i}+\cdots+x^{(i)}_{\mu_{i}}F^{\mu_{i}-1}_{i}.$
We may assume 
$$e_{Q}=\left(
\begin{array}{ccc}
E_{1}&&\text{\large{0}}\\
&\ddots&\\
\text{\large{0}}&&E_{n}\\
\end{array}
\right)
$$
and 
$$f_{Q}=\left(
\begin{array}{ccc}
F_{1}&&\text{\large{0}}\\
&\ddots&\\
\text{\large{0}}&&F_{n}\\
\end{array}
\right).
$$
Any element of $e_{Q}+Z_{\mf{l}_{Q}}(f_{Q})$ can be written as
$$Z(x):=\left(
\begin{array}{ccc}
Z_{1}(x^{(1)})&&\text{\large{0}}\\
&\ddots&\\
\text{\large{0}}&&Z_{n}(x^{(n)})\\
\end{array}
\right)
$$
for some $x^{(i)}_{j}$'s. We regard $x^{(i)}_{j}$'s as coordinates on $e_{Q}+Z_{\mf{l}_{Q}}(f_{Q})$ and define $\tilde{e}_{r}(\mu;i)\in\bb{C}[e_{Q}+Z_{\mf{l}_{Q}}(f_{Q})]$ for $1\leq r\leq\mu_{i}$ by $$\det(tI-Z_{i}(x^{(i)}))=t^{\mu_{i}}-\tilde{e}_{1}(\mu;i)t^{\mu_{i}-1}+\cdots+(-1)^{\mu_{i}}\tilde{e}_{\mu_{i}}(\mu;i).$$ Then $\tilde{e}_{r}(\mu;i)$ is homogeneous of degree $2r$ and $\{\tilde{e}_{r}(\mu;i)\mid 1\leq i\leq n,1\leq r\leq\mu_{i}\}$ freely generate the ring $\bb{C}[e_{Q}+Z_{\mf{l}_{Q}}(f_{Q})]$. We denote by $\psi:R_{\mu}\xrightarrow{\sim}\bb{C}[e_{Q}+Z_{\mf{l}_{Q}}(f_{Q})]$ the graded algebra isomorphism given by $\psi(e_{r}(\mu;i))=\tilde{e}_{r}(\mu;i)$. We denote by $\tilde{e}_{r}(\mu;i_{1},\ldots,i_{m})$ the image of $e_{r}(\mu;i_{1},\ldots,i_{m})$ under $\psi$.

In order to prove the assertion, it suffices to show that the defining ideal $\tilde{I}^{\lambda}_{\mu}$ of $\mca{N}_{P}\cap(e_{Q}+Z_{\mf{l}_{Q}}(f_{Q}))$ in $\bb{C}[e_{Q}+Z_{\mf{l}_{Q}}(f_{Q})]$ coincides with $\psi(I^{\lambda}_{\mu})$. By Theorem A.4, $\tilde{I}^{\lambda}_{\mu}$ is generated by coefficients of $t^{k}$ of various $s$-minors of $tI-Z(x)$ with $k<\lambda_{n-s+1}+\cdots+\lambda_{n}$. If we remove the first row and the last column of $Z_{i}(x^{(i)})$, then we obtain a upper triangular matrix with diagonal entries $1$. Hence for $s<\mu_{i}$, there is an $s$-minor of $tI-Z_{i}(x^{(i)})$ which equals to $\pm 1$. 

Let $l=\#\{i\mid\mu_{i}>0\}$. Consider an $s$-minor of $tI-Z(x)$. Note that nonzero $s$-minor of $tI-Z(x)$ is a certain product of $s_{i}$-minors of $tI-Z_{i}(x^{(i)})$ with $s_{1}+\cdots+s_{n}=s$. We set $m=l-(n-s)$. Then we have $\#\{i\mid s_{i}=\mu_{i}>0\}\geq m$.

First we assume $m\leq 0$. Then there is an $s$-minor of $tI-Z(x)$ which equals to $\pm 1$. If $\lambda_{n-s+1}+\cdots+\lambda_{n}=0$, then $s$-minors of $tI-Z(x)$ do not contribute to $\tilde{I}^{\lambda}_{\mu}$. If $\lambda_{n-s+1}+\cdots+\lambda_{n}\geq 1$, then we have $1\in\tilde{I}^{\lambda}_{\mu}$. On the other hand, there exists $i$ such that $\mu_{i}=0$ by the assumption $m\leq0$. We have $1=e_{0}(\mu;i)\in I^{\lambda}_{\mu}$ by the definition of $I^{\lambda}_{\mu}$ and
\begin{align*}
\mu_{i}-\lambda_{l+1}-\cdots-\lambda_{n}&=-\lambda_{n-s+m+1}-\cdots-\lambda_{n} \\
&\leq-\lambda_{n-s+1}-\cdots-\lambda_{n}\\
&<0. 
\end{align*}
Hence we have $\tilde{I}^{\lambda}_{\mu}=\psi(I^{\lambda}_{\mu})$ in this case. 

Next we consider the case of $m\geq 1$. Let $1\leq i_{1}<\cdots<i_{m}\leq l$ be some labels satisfying $s_{i}=\mu_{i}$. Then this $s$-minor of $tI-Z(x)$ is a product of some polynomial and 
\begin{align*}
\det(tI-Z_{i_{1}})\cdots\det(tI-Z_{i_{m}})&=(t^{\mu_{i_{1}}}-\tilde{e}_{1}(\mu;i_{1})t^{\mu_{i_{1}}-1}+\cdots+(-1)^{\mu_{i_{1}}}\tilde{e}_{\mu_{i_{1}}}(\mu;i_{1}))\cdot\\ &\hspace{3em}\cdots(t^{\mu_{i_{m}}}-\tilde{e}_{1}(\mu;i_{m})t^{\mu_{i_{m}}-1}+\cdots+(-1)^{\mu_{i_{m}}}\tilde{e}_{\mu_{i_{m}}}(\mu;i_{m}))\\
&=\sum_{r=0}^{\mu_{i_{1}}+\cdots+\mu_{i_{m}}}(-1)^{r}\tilde{e}_{r}(\mu;i_{1},\ldots,i_{m})t^{\mu_{i_{1}}+\cdots+\mu_{i_{m}}-r}.
\end{align*}
Hence the coefficients of $t^{k}$ with $k<\lambda_{n-s+1}+\cdots+\lambda_{n}$ are contained in the ideal generated by $\{\tilde{e}_{r}(\mu;i_{1},\ldots,i_{m})\mid r>\mu_{i_{1}}+\cdots+\mu_{i_{m}}-\lambda_{l-m+1}-\cdots-\lambda_{n}\}$. Conversely, if we choose $s_{i}=\mu_{i}-1$ for $i\neq i_{1},\ldots,i_{m}$ and $s_{i}$-minors of $tI-Z_{i}$ which are $\pm 1$, then $\pm\tilde{e}_{r}(\mu;i_{1},\ldots,i_{m})$ appears as a coefficient of $t^{k}$ for some $s$-minor of $(tI-Z(x))$ with $k<\lambda_{n-s+1}+\cdots+\lambda_{n}$. This proves $\tilde{I}^{\lambda}_{\mu}=\psi(I^{\lambda}_{\mu})$.
\end{proof}

\section{Hypertoric variety}

We briefly recall the definition and some properties of hypertoric varieties following \cite{P}. Let $T^{n}=(\bb{G}_{m})^{n}$ be the $n$-dimensional complex torus and $\mf{t}^{n}$ its Lie algebra with a full lattice $\mf{t}^{n}_{\bb{Z}}$ and its basis $\{\varepsilon_{i}\}$. Let $\mf{t}^{d}$ be a complex vector space of dimension $d$ with a full lattice $\mf{t}^{d}_{\bb{Z}}$. Let $\{a_{1},\ldots,a_{n}\}\subset\mf{t}^{d}_{\bb{Z}}$ be a collection of nonzero vectors which spans $\mf{t}^{d}_{\bb{Z}}$. Let $a:\mf{t}^{n}\rightarrow\mf{t}^{d}$ be the linear map defined by $a(\varepsilon_{i})=a_{i}$ and let $\mf{t}^{k}$ be the kernel of $a$ with a full lattice $\mf{t}^{k}_{\bb{Z}}$. Then we have the following exact sequences $$0\rightarrow\mf{t}^{k}\xrightarrow{\iota}\mf{t}^{n}\xrightarrow{a}\mf{t}^{d}\rightarrow 0$$ and $$0\rightarrow\mf{t}^{k}_{\bb{Z}}\rightarrow\mf{t}^{n}_{\bb{Z}}\rightarrow\mf{t}^{d}_{\bb{Z}}\rightarrow 0.$$ This gives an exact sequence of tori $$0\rightarrow T^{k}\rightarrow T^{n}\rightarrow T^{d}\rightarrow 0.$$ 
There is a Hamiltonian $T^{n}$ action on $T^{\ast}\bb{C}^{n}$ given by $$(\lambda_{1},\ldots,\lambda_{n})\cdot (z_{1},\ldots,z_{n},w_{1},\ldots,w_{n})=(\lambda_{1}z_{1},\ldots,\lambda_{n}z_{n},\lambda_{1}^{-1}w_{1},\ldots,\lambda_{n}^{-1}w_{n})$$ for $(\lambda_{1},\ldots,\lambda_{n})\in T^{n}$ and $(z_{1},\ldots,z_{n},w_{1},\ldots,w_{n})\in T^{\ast}\bb{C}^{n}$. The moment map $\mu_{n}:T^{\ast}\bb{C}^{n}\rightarrow(\mf{t}^{n})^{\ast}$ for this action is given by $\mu_{n}(z_{1},\ldots,z_{n},w_{1},\ldots,w_{n})=(z_{1}w_{1},\ldots,z_{n}w_{n})$. Then the moment map for the action of $T^{k}$ on $T^{\ast}\bb{C}^{n}$ is given by $\mu=\iota^{\ast}\circ\mu_{n}$. For $\alpha\in(\mf{t}^{k}_{\bb{Z}})^{\ast}$ a character of $T^{k}$, we define the hypertoric variety associated to $\mca{A}=\{a_{1},\ldots,a_{n}\}$ and $\alpha$ by $$\mf{M}_{\alpha}(\mca{A})=\mu^{-1}(0)/\!\!/_{\alpha}T^{k}.$$ Here the quotient above is the GIT quotient with respect to $\alpha$. For $r=(r_{1},\ldots,r_{n})\in(\mf{t}^{n})^{\ast}$ a lift of $\alpha$ along $\iota^{\ast}$ and $i=1,\ldots,n$, we set $$H_{i}=\{x\in(\mf{t}^{d})^{\ast}_{\bb{R}}\mid x\cdot a_{i}+r_{i}=0\}.$$ Hyperplane arrangement $\{H_{1},\ldots,H_{n}\}$ is called simple if every subset of $m$ hyperplanes with nonempty intersection intersects in codimension $m$ and $\mca{A}$ is called unimodular if every collection of $d$ linearly independent vectors $\{a_{i_{1}},\ldots,a_{i_{d}}\}$ spans $\mf{t}^{d}_{\bb{Z}}$ over $\bb{Z}$. It is known that $\mf{M}_{\alpha}(\mca{A})$ is smooth if and only if $\{H_{1},\ldots,H_{n}\}$ is simple and $\mca{A}$ is unimodular. The torus $T^{d}$ naturally acts on $\mf{M}_{\alpha}(\mca{A})$ and preserves the symplectic form.

We set $\check{\mf{t}}^{n}=(\mf{t}^{n})^{\ast}$, $\check{\mf{t}}^{d}=(\mf{t}^{d})^{\ast}$, and $\check{\mf{t}}^{k}=(\mf{t}^{k})^{\ast}$. We denote by $\check{T}^{n}$, $\check{T}^{d}$, and $\check{T}^{k}$ the dual tori of $T^{n}$, $T^{d}$, and $T^{k}$ respectively. We set $b=\iota^{\ast}$ and set $b_{i}\in\check{\mf{t}}^{k}_{\bb{Z}}$ $(1\leq i\leq n)$ to be the image of the standard basis of $\check{\mf{t}}^{n}_{\bb{Z}}$ under $b$. Let $\mca{B}=\{b_{1},\ldots,b_{n}\}$ be the Gale dual configuration of $\mca{A}$. Let us fix a basis of $\mf{t}^{d}_{\bb{Z}}\cong\bb{Z}^{d}$ and its dual basis $\check{\mf{t}}^{d}_{\bb{Z}}\cong\bb{Z}^{d}$. Let us write $a_{i}=(a_{i1},\ldots,a_{id})$ using this basis. Then the moment map $\check{\mu}:T^{\ast}\bb{C}^{n}\rightarrow\mf{t}^{d}$ for the $\check{T}^{d}$-action on $T^{\ast}\bb{C}^{n}$ is given by 
\begin{align}\label{moment}
\check{\mu}(z_{1},\ldots,z_{n},w_{1},\ldots,w_{n})=\left(\sum_{i}a_{i1}z_{i}w_{i},\ldots,\sum_{i}a_{id}z_{i}w_{i}\right).
\end{align}
The action of $(\lambda_{1},\ldots,\lambda_{d})\in\check{T}^{d}$ on $\bb{C}[T^{\ast}\bb{C}^{n}]$ is given by
\begin{align*} 
(\lambda_{1},\ldots,\lambda_{d})\cdot z_{i}&=\lambda_{1}^{a_{i1}}\cdots\lambda_{d}^{a_{id}}z_{i},\\
(\lambda_{1},\ldots,\lambda_{d})\cdot w_{i}&=\lambda_{1}^{-a_{i1}}\cdots\lambda_{d}^{-a_{id}}w_{i}.
\end{align*}

We consider the hypertoric variety $\mf{M}_{0}(\mca{B})=\mr{Spec}(\bb{C}[\check{\mu}^{-1}(0)]^{\check{T}^{d}})$ associated to $\mca{B}$ and $0\in(\mf{t}^{d})^{\ast}_{\bb{Z}}$. The torus $\check{T}^{k}=\check{T}^{n}/\check{T}^{d}$ naturally acts on $\mf{M}_{0}(\mca{B})$ and there is another $\bb{G}_{m}$ action on $\mf{M}_{0}(\mca{B})$ induced from its action on $T^{\ast}\bb{C}^{n}$ given by $t\cdot(z,w)=(t^{-1}z,t^{-1}w)$. This $\bb{G}_{m}$-action induces an $\bb{G}_{m}$-action on the fixed point scheme $\mf{M}_{0}(\mca{B})^{\check{T}^{k}}$. The aim of this appendix is to prove the following.

\begin{thm}
If $\mf{M}_{\alpha}(\mca{A})$ is smooth, then there is an isomorphism of graded algebras $$H^{\ast}(\mf{M}_{\alpha}(\mca{A}),\bb{C})\cong\bb{C}[\mf{M}_{0}(\mca{B})^{\check{T}^{k}}].$$ Here, grading on $\bb{C}[\mf{M}_{0}(\mca{B})^{\check{T}^{k}}]$ comes from the $\bb{G}_{m}$-action above.
\end{thm}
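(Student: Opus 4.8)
The plan is to put both sides into one and the same presentation as a quotient of a graded polynomial ring $\bb{C}[u_{1},\dots,u_{n}]$ with $\deg u_{i}=2$, and then match generators and relations under $u_{i}\leftrightarrow z_{i}w_{i}$.

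First I would recall the combinatorial description of the left-hand side. For a smooth hypertoric variety the cohomology ring is, by the theorem of Konno and Hausel--Sturmfels (see \cite{P}),
$$H^{\ast}(\mf{M}_{\alpha}(\mca{A}),\bb{C})\cong\bb{C}[u_{1},\dots,u_{n}]/(\mca{I}+\mca{J}),$$
where $\mca{I}$ is the linear ideal generated by $\{\sum_{i}a_{ij}u_{i}\mid 1\le j\le d\}$ (the image of $a^{\ast}\colon\check{\mf{t}}^{d}\to\check{\mf{t}}^{n}$, under the identifications $H^{2}=\check{\mf{t}}^{k}$ and $u_{i}\leftrightarrow$ the $i$-th coordinate) and $\mca{J}$ is the squarefree monomial ideal generated by $\{\prod_{i\in C}u_{i}\mid C\text{ a circuit of }\mca{A}\}$. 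I would stress that, because $\mca{A}$ is unimodular, the circuit relations are squarefree and this presentation depends only on the matroid of $\mca{A}$, not on the chamber of $\alpha$; this matches the fact that the right-hand side makes no reference to $\alpha$.

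Next I would compute the right-hand side directly from its GIT description $\mf{M}_{0}(\mca{B})=\mr{Spec}(\bb{C}[\check{\mu}^{-1}(0)]^{\check{T}^{d}})$, with the residual torus $\check{T}^{k}=\check{T}^{n}/\check{T}^{d}$ acting. A $\check{T}^{d}$-invariant monomial $\prod_{i}z_{i}^{p_{i}}w_{i}^{q_{i}}$ requires $\sum_{i}(p_{i}-q_{i})a_{i}=0$, i.e.\ $(p_{i}-q_{i})_{i}\in\mf{t}^{k}_{\bb{Z}}$, and its $\check{T}^{k}$-weight is the class of $(p_{i}-q_{i})_{i}$. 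Hence the weight-zero part is generated by $t_{i}:=z_{i}w_{i}$, while the defining ideal of the fixed-point scheme is generated by the monomials of nonzero weight, namely $u_{\gamma}:=\prod_{\gamma_{i}>0}z_{i}^{\gamma_{i}}\prod_{\gamma_{i}<0}w_{i}^{-\gamma_{i}}$ for $0\neq\gamma\in\mf{t}^{k}_{\bb{Z}}$. On $\check{\mu}^{-1}(0)$ the relations $\sum_{i}a_{ij}t_{i}=0$ hold, reproducing $\mca{I}$; and from $u_{\gamma}u_{-\gamma}=\prod_{i\in\mr{supp}\gamma}t_{i}^{|\gamma_{i}|}$ the vanishing of every $u_{\gamma}$ forces $\prod_{i\in\mr{supp}\gamma}t_{i}^{|\gamma_{i}|}=0$, whose minimal instances are, by unimodularity, exactly the squarefree circuit monomials generating $\mca{J}$. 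A short check of the $\bb{G}_{m}$-grading (the action $t\cdot(z,w)=(t^{-1}z,t^{-1}w)$ makes each $t_{i}$ homogeneous of degree $2$) then shows that $t_{i}\mapsto u_{i}$ is a graded isomorphism $\bb{C}[\mf{M}_{0}(\mca{B})^{\check{T}^{k}}]\cong\bb{C}[u_{1},\dots,u_{n}]/(\mca{I}+\mca{J})$.

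The main obstacle is the last computation: showing that the fixed-point ideal, restricted to the weight-zero subring $\bb{C}[t_{1},\dots,t_{n}]/\mca{I}$, is \emph{exactly} the circuit ideal $\mca{J}$ --- no more and no less. This needs a precise hold on the invariant ring $\bb{C}[\mf{M}_{0}(\mca{B})]$: one must describe the semigroup of $\check{T}^{d}$-invariant monomials together with its relations, verify that every weight-zero element of the fixed-point ideal lies in the ideal generated by the products $u_{\gamma}u_{-\gamma}$, and check that these products are in turn generated by the circuit monomials. Unimodularity is precisely what guarantees that circuits carry $\pm1$ coefficients and hence produce the squarefree generators of $\mca{J}$ on the nose, while simplicity and smoothness is what makes the cohomology presentation clean. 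Matching the full scheme structure (including nilpotents) rather than only the reduced loci is the delicate point, and here I would lean on the explicit Hausel--Sturmfels type description of $\bb{C}[\mf{M}_{0}(\mca{B})]$ recorded in \cite{P} to control the relations.
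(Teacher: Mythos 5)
Your outline follows the paper's proof essentially step for step: both sides are presented as quotients of $\bb{C}[u_{1},\dots,u_{n}]$ via Theorem B.2 on one side and the identification $u_{i}=z_{i}w_{i}$ on the other, with unimodularity supplying the squarefree circuit monomials through $v_{\vec{p}}v_{-\vec{p}}=\prod_{i\in S}u_{i}$. The one step you flag as ``the main obstacle'' --- that the fixed-point ideal meets the weight-zero subring in exactly the circuit ideal, and not more --- is indeed the crux, and you leave it open by deferring to an unspecified description in the literature. The paper closes it with a short, elementary observation that you should supply: any $\check{T}^{d}$-invariant monomial $\prod_{i}z_{i}^{c_{i}}w_{i}^{c'_{i}}$ factors as $v_{\vec{m}}\prod_{i}u_{i}^{\min(c_{i},c'_{i})}$ with $m_{i}=c_{i}-c'_{i}$, so $\bb{C}[T^{\ast}\bb{C}^{n}/\check{T}^{d}]$ is generated as a $\bb{C}[u_{1},\dots,u_{n}]$-module by the $v_{\vec{m}}$ with $\sum_{i}m_{i}a_{i}=0$. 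Consequently, if a monomial $\prod_{i}u_{i}^{q_{i}}$ lies in the ideal generated by $\{v_{\vec{m}}\mid\vec{m}\neq 0\}$, then by expanding the coefficients in these module generators and comparing monomials it must equal $v_{\vec{m}}v_{\vec{m}'}\prod_{i}u_{i}^{c_{i}}$ for some $\vec{m}\neq 0$, whence $\{i\mid q_{i}\neq 0\}$ contains the dependent set $\mr{supp}(\vec{m})$ and the monomial lies in the circuit ideal. This settles the scheme-theoretic (not merely set-theoretic) containment you were worried about; the remaining step of imposing the moment-map relations $\sum_{i}a_{ij}u_{i}$ commutes with taking fixed points because these relations have $\check{T}^{k}$-weight zero. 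With that lemma inserted, your argument coincides with the paper's.
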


Let $\Delta_{\mca{A}}$ be the matroid complex associated to $\mca{A}$, that is, the simplicial complex consisting of all sets $S\subset\{1,\ldots,n\}$ such that $\{a_{i}\mid i\in S\}$ are linearly independent. Let $$\mca{SR}(\Delta_{\mca{A}}):=\bb{C}[e_{1},\ldots,e_{n}]/\left(\prod_{i\in S}e_{i}\mid S\notin\Delta_{\mca{A}}\right)$$ be the Stanley-Reisner ring of $\Delta_{\mca{A}}$. We define its grading by setting $\deg(e_{i})=2$. Then the cohomology ring of $\mf{M}_{\alpha}(\mca{A})$ can be described as follows.

\begin{thm}[\cite{HS},\cite{K}]
If $\mf{M}_{\alpha}(\mca{A})$ is smooth (or has at worst orbifold singularities), then there is an isomorphism of graded algebras $$H^{\ast}(\mf{M}_{\alpha}(\mca{A}),\bb{C})\cong\mca{SR}(\Delta_{\mca{A}})/\left(\sum_{i=1}^{n}a_{ij}e_{i}\mid j=1,\ldots,d\right).$$ 
\end{thm}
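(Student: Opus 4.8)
The plan is to produce an explicit surjective graded ring homomorphism from the right-hand side onto $H^{\ast}(\mf{M}_{\alpha}(\mca{A}),\bb{C})$ and then upgrade it to an isomorphism by a Poincar\'e-series count. First I would set up the Kirwan map. Since $\mf{M}_{\alpha}(\mca{A})=\mu^{-1}(0)/\!\!/_{\alpha}T^{k}$ is a GIT quotient of an open part of the smooth affine space $T^{\ast}\bb{C}^{n}$ by the torus $T^{k}$, and is assumed smooth, each coordinate hyperplane $\{z_{i}=0\}$ descends to a divisor $D_{i}\subset\mf{M}_{\alpha}(\mca{A})$ (equivalently the character $\varepsilon_{i}$ of $T^{n}$ yields a line bundle $L_{i}$ on the quotient). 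Setting $e_{i}:=c_{1}(L_{i})\in H^{2}(\mf{M}_{\alpha}(\mca{A}),\bb{C})$ defines a graded algebra map $\phi:\bb{C}[e_{1},\ldots,e_{n}]\to H^{\ast}(\mf{M}_{\alpha}(\mca{A}),\bb{C})$, which is nothing but the Kirwan map attached to $H^{\ast}_{T^{n}}(\mr{pt})=\bb{C}[e_{1},\ldots,e_{n}]$.

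Next I would verify that $\phi$ annihilates both families of relations. The linear forms $\sum_{i}a_{ij}e_{i}$ are precisely the images of the equivariant parameters of the residual torus $T^{d}=T^{n}/T^{k}$ under $a^{\ast}:(\mf{t}^{d})^{\ast}\hookrightarrow(\mf{t}^{n})^{\ast}$, $j\mapsto\sum_{i}a_{ij}\varepsilon_{i}$; since passing from $T^{d}$-equivariant cohomology to ordinary cohomology sets these parameters to zero, we get $\phi(\sum_{i}a_{ij}e_{i})=0$. The Stanley--Reisner relations hold geometrically: if $\{a_{i}\mid i\in S\}$ is linearly dependent then the stability condition cutting out the semistable locus forces $\bigcap_{i\in S}D_{i}=\emptyset$ in $\mf{M}_{\alpha}(\mca{A})$, whence $\prod_{i\in S}e_{i}=0$. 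Thus $\phi$ descends to a map $\bar{\phi}:\mca{SR}(\Delta_{\mca{A}})/(\sum_{i}a_{ij}e_{i}\mid j)\to H^{\ast}(\mf{M}_{\alpha}(\mca{A}),\bb{C})$, and this map is surjective by Kirwan surjectivity for abelian GIT quotients of affine space.

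It remains to prove that $\bar{\phi}$ is injective, and I would do this by matching graded dimensions degree by degree. On the algebraic side, the matroid complex $\Delta_{\mca{A}}$ is Cohen--Macaulay of Krull dimension $d$ (matroid complexes are shellable), and the $d$ linear forms $\sum_{i}a_{ij}e_{i}$ form a linear system of parameters because $\{a_{i}\mid i\in B\}$ is a basis of $\mf{t}^{d}$ for every facet $B$ of $\Delta_{\mca{A}}$; hence the Hilbert series of the source equals the $h$-polynomial of the matroid. On the geometric side, a generic circle in $T^{d}$ has moment map a perfect Morse function on $\mf{M}_{\alpha}(\mca{A})$, and the resulting Bialynicki--Birula decomposition, indexed by the bounded feasible chambers of the arrangement, shows that the cohomology is concentrated in even degrees with $\dim_{\bb{C}}H^{2i}(\mf{M}_{\alpha}(\mca{A}),\bb{C})$ equal to the same $h$-numbers. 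Since the two graded Poincar\'e series coincide term by term, the graded surjection $\bar{\phi}$ between finite-dimensional spaces of equal dimension in each degree must be an isomorphism, proving the theorem.

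The main obstacle is exactly this last dimension match: it is not a formal consequence of surjectivity but rests on two independent combinatorial computations being seen to produce the same $h$-vector. One must know that matroid complexes are Cohen--Macaulay and that the forms $\sum_{i}a_{ij}e_{i}$ are a genuine homogeneous system of parameters (which is where unimodularity and simplicity of the arrangement enter in the smooth case), and one must extract the Betti numbers from the Morse stratification by the hyperplane arrangement and recognize them as the very same $h$-numbers. By contrast, establishing Kirwan surjectivity and the emptiness $\bigcap_{i\in S}D_{i}=\emptyset$ for dependent $S$ is routine once the moment-map/GIT dictionary set up in this appendix is in hand. The orbifold-singularity case is handled identically over $\bb{C}$, since both Kirwan surjectivity and the Morse-theoretic count persist for the corresponding quotient stacks.
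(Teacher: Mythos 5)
The paper offers no proof to compare against here: Theorem B.2 is imported from Hausel--Sturmfels and Konno, and the appendix only uses it as a black box. Your outline is essentially Konno's route (Kirwan map, verification of the two families of relations, then a Hilbert-series match pitting Cohen--Macaulayness of the matroid complex against a Morse-theoretic Betti number computation); Hausel--Sturmfels argue differently, embedding $\mf{M}_{\alpha}(\mca{A})$ into the Lawrence toric variety $T^{\ast}\bb{C}^{n}/\!\!/_{\alpha}T^{k}$, showing both spaces deformation retract onto a common core, and then quoting the Danilov--Jurkiewicz presentation for semiprojective toric varieties. So the real question is whether your sketch is sound, and it contains one step that is false as stated.

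The Stanley--Reisner relations do \emph{not} hold because $\bigcap_{i\in S}D_{i}=\emptyset$ for dependent $S$: that intersection is in general nonempty. Take $n=2$, $d=1$, $a_{1}=a_{2}=1$, so $T^{k}=\{(\lambda,\lambda^{-1})\}$ and $\mf{M}_{\alpha}(\mca{A})\cong T^{\ast}\bb{P}^{1}$ for $r_{1}\neq r_{2}$. The point $z_{1}=z_{2}=0$, $w=(0,1)$ lies in $\mu^{-1}(0)$ and is $\alpha$-semistable for $\alpha>0$ (the coordinate $w_{2}$ has positive $\alpha$-weight; for $\alpha<0$ use $w=(1,0)$), so $D_{1}\cap D_{2}\neq\emptyset$ even though $e_{1}e_{2}=0$ holds for degree reasons. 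The correct geometric vanishing, the one actually used by Konno and Hausel--Sturmfels, mixes the $z$- and $w$-divisors according to a signed circuit: writing $\sum_{i\in S}p_{i}a_{i}=0$ with $p_{i}=\pm1$ (unimodularity), for one of the two orientations of the circuit the locus $\{z_{i}=0\ (p_{i}>0),\ w_{i}=0\ (p_{i}<0)\}$ is empty in $\mf{M}_{\alpha}(\mca{A})$. This is exactly where simplicity of the arrangement enters: a circuit has $\bigcap_{i\in S}H_{i}=\emptyset$, i.e. $\sum_{i\in S}p_{i}r_{i}\neq0$, which contradicts the sign constraints ($z_{i}=0$ forces $x\cdot a_{i}+r_{i}\leq0$, $w_{i}=0$ forces $x\cdot a_{i}+r_{i}\geq0$) that the real moment map places along that locus. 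Since $\{w_{i}=0\}$ is the zero set of a section of $L_{i}^{-1}$ and hence represents $-e_{i}$, the empty mixed intersection still gives $\prod_{i\in S}e_{i}=0$. (Relatedly, simplicity is not where you place it: the l.s.o.p.\ property of the forms $\sum_{i}a_{ij}e_{i}$ needs only that every facet of $\Delta_{\mca{A}}$ is a basis of $\mf{t}^{d}$, which is automatic.)

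Two further cautions. First, ``Kirwan surjectivity for abelian GIT quotients of affine space'' is miscast: $\mf{M}_{\alpha}(\mca{A})$ is a GIT quotient of the \emph{singular} affine variety $\mu^{-1}(0)$, not of $T^{\ast}\bb{C}^{n}$, and surjectivity of $H^{\ast}_{T^{k}}(\mr{pt})\rightarrow H^{\ast}(\mf{M}_{\alpha}(\mca{A}))$ is hyperk\"ahler Kirwan surjectivity --- open in general, and for hypertoric varieties a theorem of Konno (alternatively a consequence of the Lawrence toric embedding), so it must be invoked as such rather than as routine. Second, the Bialynicki--Birula strata are indexed by the vertices of the arrangement, equivalently the bases of the matroid (which is why the count produces the $h$-vector), not by the bounded chambers; the latter index the components of the core. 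With the signed-circuit correction and these inputs properly justified, your surjection-plus-dimension-count scheme does yield the theorem, including the orbifold case.
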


\begin{proof}[Proof of Theorem B.1]

For $x\in\bb{Z}$, we write $[x]_{+}:=\max(x,0)$. We set $u_{i}:=z_{i}w_{i}$ for $1\leq i\leq n$ and $v_{\vec{m}}=\prod_{i}z_{i}^{[m_{i}]_{+}}w_{i}^{[-m_{i}]_{+}}$ for $\vec{m}=(m_{1},\ldots,m_{n})\in\bb{Z}^{n}$ with $\sum_{i}m_{i}a_{i}=0$. Then we have $u_{i}$, $v_{\vec{m}}\in\bb{C}[z_{1},\ldots,z_{n},w_{1},\ldots,w_{n}]^{\check{T}^{d}}=\bb{C}[T^{\ast}\bb{C}^{n}/\check{T}^{d}]$. If a monomial $\prod_{i}z_{i}^{c_{i}}w_{i}^{c'_{i}}$ is contained in $\bb{C}[T^{\ast}\bb{C}^{n}/\check{T}^{d}]$, then we have $\sum_{i}(c_{i}-c'_{i})a_{i}=0$. Hence we can write $\prod_{i}z_{i}^{c_{i}}w_{i}^{c'_{i}}=v_{\vec{m}}\prod_{i}u_{i}^{\min(c_{i},c'_{i})}$ by setting $m_{i}=c_{i}-c'_{i}$. Therefore, $\bb{C}[T^{\ast}\bb{C}^{n}/\check{T}^{d}]$ is generated by $\{v_{\vec{m}}\mid\sum_{i}m_{i}a_{i}=0\}$ as a $\bb{C}[u_{1},\ldots,u_{n}]$-module and the defining ideal of the $\check{T}^{k}$-fixed point scheme $(T^{\ast}\bb{C}^{n}/\check{T}^{d})^{\check{T}^{k}}$ in $T^{\ast}\bb{C}^{n}/\check{T}^{d}$ is generated by $v_{\vec{m}}$'s for $\vec{m}\neq0$. It follows that $\bb{C}[(T^{\ast}\bb{C}^{n}/\check{T}^{d})^{\check{T}^{k}}]$ is generated by $u_{1},\ldots,u_{n}$ as a $\bb{C}$-algebra.

Let $S\subset\{1,\ldots,n\}$ be a circuit of $\Delta_{\mca{A}}$, i.e. minimal among the subsets of $\{1,\ldots,n\}$ which is not in $\Delta_{\mca{A}}$. There is a relation $\sum_{i\in S}p_{i}a_{i}=0$, where all $p_{i}\in\bb{Z}$ are nonzero. For any $i_{0}\in S$, $a_{i_{0}}=-\sum_{i\in S\setminus\{i_{0}\}}\frac{p_{i}}{p_{i_{0}}}a_{i}$ and $\{a_{i}\}_{i\in S\setminus\{i_{0}\}}$ is linearly independent. Hence from the unimodularity of $\mca{A}$, we have $\frac{p_{i}}{p_{i_{0}}}\in\bb{Z}$. Therefore, we can take $p_{i}=\pm1$ for all $i\in S$. We set $p_{i}=0$ for $i\notin S$ and set $\vec{p}=(p_{1},\ldots,p_{n})\in\bb{Z}^{n}$. Then we have $\prod_{i\in S}u_{i}=v_{\vec{p}}v_{-\vec{p}}$ in $\bb{C}[T^{\ast}\bb{C}^{n}/\check{T}^{d}]$ and hence $\prod_{i\in S}u_{i}=0$ in $\bb{C}[(T^{\ast}\bb{C}^{n}/\check{T}^{d})^{\check{T}^{k}}]$. On the other hand, if a monomial $\prod_{i}u_{i}^{q_{i}}$ in $u_{i}$ is zero in $\bb{C}[(T^{\ast}\bb{C}^{n}/\check{T}^{d})^{\check{T}^{k}}]$, then there exists $\vec{c},\vec{m},\vec{m}'\in\bb{Z}^{n}$ with $\sum_{i}m_{i}a_{i}=\sum_{i}m'_{i}a_{i}=0$ and $\vec{m}\neq0$ such that $$\prod_{i}u_{i}^{q_{i}}=v_{\vec{m}}v_{\vec{m}'}\prod_{i}u_{i}^{c_{i}}.$$ Hence the subset $\{i\mid q_{i}\neq0\}\subset\{1,\ldots,n\}$ contains $\{i\mid m_{i}\neq0\}\notin\Delta_{\mca{A}}$. Therefore, $\prod_{i}u_{i}^{q_{i}}$ is contained in the ideal of $\bb{C}[u_{1}\ldots,u_{n}]$ generated by $\prod_{i\in S}u_{i}$ for $S\notin\Delta_{\mca{A}}$. It follows that we have an isomorphism of graded algebras $$\bb{C}[(T^{\ast}\bb{C}^{n}/\check{T}^{d})^{\check{T}^{k}}]\cong\bb{C}[u_{1},\ldots,u_{n}]/\left(\prod_{i\in S}u_{i}\mid S\notin\Delta_{\mca{A}}\right)\cong\mca{SR}(\Delta_{\mca{A}})$$ by sending $u_{i}$ to $e_{i}$. By (\ref{moment}), we have $$\bb{C}[\mf{M}_{0}(\mca{B})]\cong\bb{C}[T^{\ast}\bb{C}^{n}/\check{T}^{d}]/\left(\sum_{i}a_{ij}u_{i}\mid j=1,\ldots,d\right).$$ It follows that 
\begin{align*}
\bb{C}[\mf{M}_{0}(\mca{B})^{\check{T}^{k}}]&\cong\bb{C}[(T^{\ast}\bb{C}^{n}/\check{T}^{d})^{\check{T}^{k}}]/\left(\sum_{i}a_{ij}u_{i}\mid j=1,\ldots,d\right)\\
&\cong\mca{SR}(\Delta_{\mca{A}})/\left(\sum_{i=1}^{n}a_{ij}e_{i}\mid j=1,\ldots,d\right).
\end{align*}
By Theorem B.2, this implies Theorem B.1.

\end{proof}

\end{document}